\newcommand{\N}{\ensuremath{\mathcal{N}}}
\newcommand{\SSS}{\ensuremath{\mathcal{S}}}
\newcommand{\CC}{\ensuremath{\mathbb{C}}}
\newcommand{\NN}{\ensuremath{\mathbb{N}}}
\newcommand{\PP}{\ensuremath{\mathbb{P}}}
\newcommand{\QQ}{\ensuremath{\mathbb{Q}}}
\newcommand{\ZZ}{\ensuremath{\mathbb{Z}}}
\def\hol{{\mathcal{O}}}
\newtheorem{theorem}{Theorem}[section]
\newtheorem{lemma}[theorem]{Lemma}
\newtheorem{corollary}[theorem]{Corollary}
\newtheorem{prop}[theorem]{Proposition}
\theoremstyle{definition}
\newtheorem{df}[theorem]{Definition}
\newtheorem{notation}[theorem]{Notation}
\theoremstyle{remark}
\newtheorem{remark}[theorem]{Remark}
\numberwithin{equation}{section}
\renewcommand{\labelenumi}{(\arabic{enumi})}
\newcommand{\LL}{\mathcal{L}}
\renewcommand{\AA}{\mathbb{A}}
\newcommand{\set}[1]{\left\{ #1 \right \} }
\newcommand{\pare}[1]{\left ( #1 \right ) }
\newcommand{\ls}[1]{\left | #1 \right | }
\newcommand{\Span}[1]{\left < #1 \right > }
\newcommand{\OO}{\mathcal{O}}
\newcommand{\xx}{\mathbf{x}}
\renewcommand{\xx}{\mathbf{x}}
\newcommand{\yy}{\mathbf{y}}
\newcommand{\rt}{\rightarrow}
\newcommand{\drt}{\dashrightarrow}
\newcommand{\Aut}{\operatorname{Aut}}
\newcommand{\Sing}{\operatorname{Sing}}
\newcommand{\Proj}{\operatorname{Proj}}
\newcommand{\Fix}{\operatorname{Fix}}
\newcommand{\Sym}{\operatorname{S}}
\newcommand{\codim}{\operatorname{codim}}
\renewcommand{\S}{\operatorname{S}}
\newcommand{\Run}{R_\mathrm{un}}
\newcommand{\Va}{V_\mathrm{a}}
\renewcommand{\S}{\mathcal{S}}
\renewcommand{\H}{\mathcal{H}}
\newcommand{\Hom}{\operatorname{Hom}}
\begin{document}

\title [Unprojection and deformations of tertiary Burniat surfaces]{Unprojection and deformations \\ of tertiary Burniat surfaces}

\author{Jorge Neves}\address{Jorge Neves:  CMUC, Department of Mathematics,  University of Coimbra,
3001-454  Coimbra, Portugal}\email{neves@mat.uc.pt}


\author{Roberto Pignatelli}\address{Roberto Pignatelli: Dipartimento di Matematica, Universit\`a di Trento. Via Sommarive 14, loc. Povo, I-38050 Trento, Italy} \email{Roberto.Pignatelli@unitn.it}

\thanks{The authors are grateful to CIRM-Trento for
supporting the visit of the first author to Trento in April of 2010.
This work was partially supported by CMUC and FCT (Portugal) through the European program COMPETE/FEDER
and through Projects PTDC/MAT/099275/2008 and PTDC/MAT/111332/2009.
We are indebted with M. Mella for pointing out the theory of Enriques--Fano $3$-folds,
which inspired the construction in Section~\ref{sec: double cover}. 
The second author would like to thank I. Bauer and F. Catanese for some very interesting 
discussions about their work on Burniat surfaces.}

\subjclass[2000]{Primary 14J29}

\date{\today}

\begin{abstract}
We construct a $4$-dimensional family of surfaces of general type with 
$p_g=0$ and $K^2=3$ and fundamental group $\ZZ/2 \times Q_8$, where $Q_8$ is the quaternion group. 
The family constructed contains the Burniat surfaces with $K^2=3$.
Additionally, we construct the universal coverings of the surfaces in our family as complete
intersections on $(\PP^1)^4$ and we also give an action of $\ZZ/2 \times Q_8$ on $(\PP^1)^4$
lifting the natural action on the surfaces.

The strategy is the following. We consider an \'etale $(\ZZ/2)^3$-cover $T$ of a surface with 
$p_g=0$ and $K^2=3$ and assume that it may be embedded in a Fano $3-$fold $V$. 
We construct $V$ by using the theory of parallel unprojection. Since $V$ is an Enriques--Fano $3$-fold, considering
its Fano cover yields the simple description of the universal covers above.

\end{abstract}

\maketitle

\section{Introduction}

A Burniat surface is the minimal resolution of singularities of a \emph{bidouble} cover, \emph{i.e.}, a
finite flat Galois morphism with Galois group $(\ZZ/2)^2$, of the projective plane branched along the divisors:
\[
D_1=A_1+A_2+A_3, \quad D_2=B_1+B_2+B_3, \quad  D_3=C_1+C_2+C_3,
\]
where $A_1$, $B_1$, $C_1$ form a triangle with vertices $\xx_1,\xx_2,\xx_3$, $A_1,A_2,A_3$
are lines through $\xx_1$, $B_1,B_2,B_3$ are lines through $\xx_2$ and $C_1,C_2,C_3$ are lines
through $\xx_3$. (\emph{Cf.}~Figure~\ref{fig: BurniatPicture}.) Burniat surfaces were first constructed
by Burniat \cite{burniat}, though a substantial part of the initial study of these surfaces was done,
about 10 years later by Peters \cite{peters}. They have an equivalent description known as the Inoue
surfaces \cite{inoue}, given as the quotient of a divisor in the product of three elliptic curves by a
finite group. See \cite{bcburniat1} for an excellent introduction to the subject of Burniat surfaces.
\medskip

Burniat surfaces are minimal surfaces of general type with
\mbox{$p_g=\dim H^0(\Omega^2)=0$} and hence with irregularity,
\mbox{$q=\dim H^0(\Omega^1)$}, equal to $0$.
The study of the mo\-du\-li space of surfaces of general type with these invariants
started in 1932 with Campedelli's celebrated construction of a surface of general type
with $p_g=0$ and $K^2=c_1^2=2$, as a double cover of the projective plane branched along a
curve of degree $10$ with $6$ singular points, not lying on a conic, all of type [3,3], that is a triple point
with another infinitely near triple point. Nowadays, this subject is
still the object of much attention, with new results on the description of whole components
of this moduli space (\emph{e.g}.~\cite{AP,CS,MP, MPR, PY1, PY2}) and on the proof of
existence of new ones (\emph{e.g}.~\cite{BCG, BCGP,BP,LP1,MPnewfam,NP1,PPS1}). See \cite{survey} for a survey
on surfaces of general type with $p_g=0$.

Let $S$ be a Burniat surface. If we assume that the branch divisors $D_1,D_2,D_3$ in
the configuration described earlier, besides satisfying the conditions stated there, are
otherwise general, then \mbox{$K^2_S=6$}. By the general theory of bidouble covers (see \cite{C}), 
imposing further, to the triple $D_1,D_2,D_3$, $m$ {\it singular points of type (1,1,1)} 
(which are points which belong to each $D_i$, which are smooth for each $D_i$, 
and such that the three tangent directions are different), then $K^2$ drops by $m$ and 
the other invariants do not change. This yields $6$ families (two for $m=2$: the family of nodal type
and the family of non nodal type) the dimensions of which are equal to $K_S^2-2=4-m$, respectively.

\begin{figure}[ht]
\begin{center}
\includegraphics{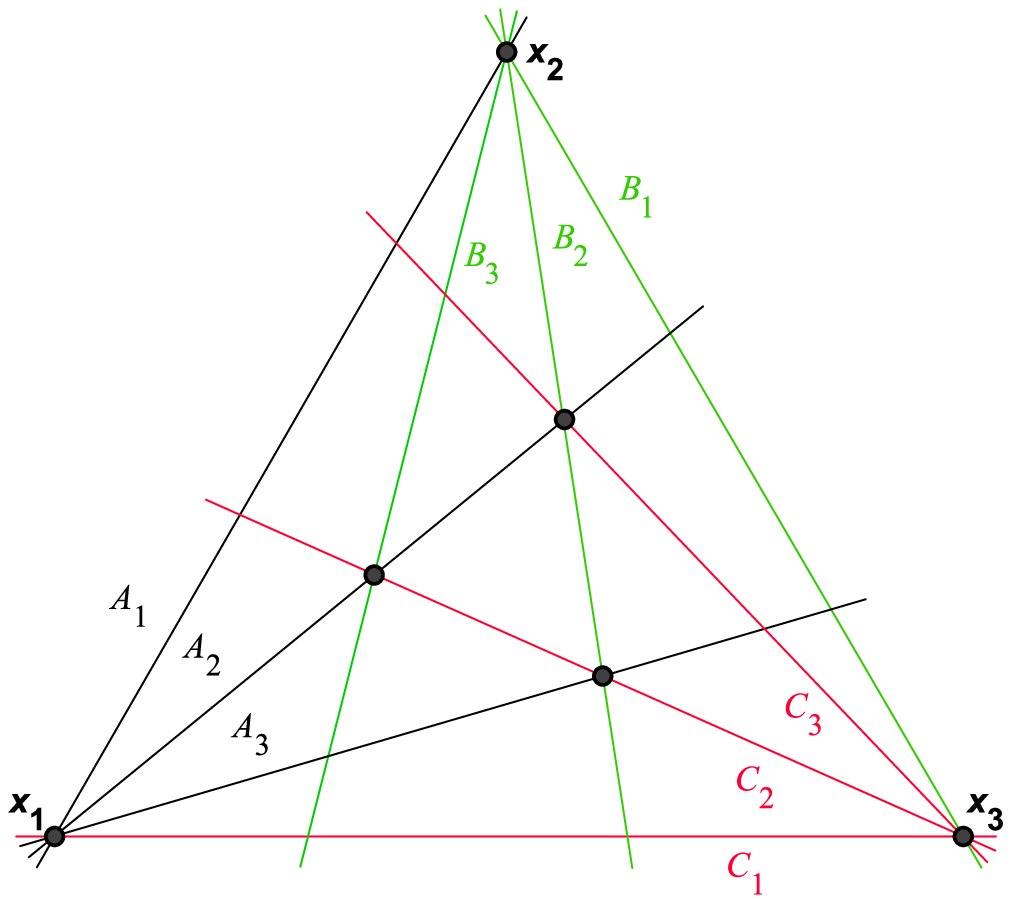}
\end{center}
\caption{Branching divisors for tertiary Burniat surfaces}\label{fig: BurniatPicture}
\end{figure}

Following Bauer and Catanese \cite{bcburniat1}, call a Burniat surface
\emph{primary} if $K^2_S=6$, \emph{secondary} if $K_S^2=4,5$, \emph{tertiary} if $K_S^2=3$ (see Figure~\ref{fig: BurniatPicture})
and \emph{quaternary} if $K_S^2=2$. 
From a certain point of view, what sets apart primary and secondary Burniat surfaces
from tertiary and quaternary Burniat surfaces is that the former families have
dimensions greater than or equal to the expected dimension $10\chi(\OO_S) -2K^2_S$ of the corresponding
moduli spaces, while the latter families have dimensions stricly less than the expected dimension of the corresponding moduli spaces.
More precisely, the family of tertiary Burniat surfaces is $1$-dimensional, whereas the moduli space of
surfaces of general type with $p_g=0$ and $K^2=3$ has expected dimension equal to $4$ and the family quaternary Burniat surfaces is
$0$-dimensional, whereas the moduli space of surfaces of general type with $p_g=0$ and $K^2=2$ (the Campedelli surfaces)
has expected dimension equal to $6$.

In 2001, Mendes Lopes and Pardini (\emph{cf.}~\cite{MPold}) proved that the $4$-dimensional fami\-ly of primary Burniat 
surfaces forms a normal, unirational, irreducible connected component of the moduli 
space of surfaces of general type with $p_g=0$ and $K^2=6$. In 2004, Kulikov (\emph{cf.}~\cite{Kul}) proved 
that the class of the quaternary Burniat surfaces belongs to the component of classical Campedelli surfaces,
\emph{i.e.}, $p_g=0$, \mbox{$K^2=2$} and torsion group $(\ZZ/2)^{3}$, which had been completely described  (\emph{cf.}~\cite{miyaoka,reid-campedelli}). In a deep recent analysis 
(\emph{cf}.~\cite{bcburniat1,bcburniat2,bcburniat3}), Bauer and Catanese have continued 
the study of the components of the moduli space of surfaces of general type containing the Burniat surfaces.
They gave an alternative proof of  Mendes Lopes--Pardini's result on primary Burniat surfaces. They showed that of the 
$3$ fami\-lies corresponding to secondary Burniat surfaces the one with $K^2=5$ and the one with $K^2=4$ of non nodal type
form irreducible connected components. They have also 
described the whole connected component containing the Burniat surfaces
with $K^2=4$ of nodal type, which turns out to have dimension $3$, one more than the expected dimension.

This article is devoted to a construction of a $4$-dimensional family of minimal
surfaces, $S$, of general type with $p_g(S)=0$ and $K_S^2=3$, containing, as a codimension $3$ subfamily,
the family of tertiary Burniat surfaces. We do this by constructing a $4$-dimensional family of surfaces
of general type $T$ with $\chi(\OO_T)=8$ and \mbox{$K^2_T=24$}, equipped with a free $G=(\ZZ/2)^3$ action.
We take $S$ as the quotient $T/G$. The family of surfaces $T$ is a linear subsystem of $\ls{-2K_V}$,
where $V$ is an Enriques--Fano $3$-fold in $\PP(1^7,2^8)$ obtained from
a complete intersection Fano $3$-fold in $\PP^6$ 
on which there exists an action of $G$ in\-du\-cing the action of this group on $T$. In this respect,
we can see $V$ as a \emph{key variety} for this construction;
just as weighted projective space acts as key variety in most elementary
constructions. This idea is reminiscent of the construction of a numerical Campedelli surface with torsion group $\ZZ/6$ of \cite{NP1}.
Lifting the action of $G$ to the Fano double cover of $V$ we obtain the simple description of our family
described in the next theorem, which synthesizes Theorem~\ref{thm: free action on $T$}, Theorem~\ref{thm: universal cover of $T$},
Theorem~\ref{thm: moduli} and Theorem~\ref{thm: The Burniat pencil} of this work.

\begin{theorem}\label{thm: main}
Consider $\PP^1\times \PP^1 \times \PP^1 \times \PP^1$, with coordinates $(t_{00},t_{01})$,
$(t_{10},t_{11})$, $(t_{20},t_{21})$, $(t_{30},t_{31})$ and the group $\tilde{G} < \Aut (\PP^1\times \PP^1\times \PP^1\times \PP^1)$
generated by the 3 automorphisms in the following table, where $\epsilon$ is a chosen square root of $-1$:
\begin{center}
\newcolumntype{x}[1]{>{\raggedleft\hspace{0pt}}p{#1}}
\centering \renewcommand{\arraystretch}{1.5}
\begin{tabular}[h]{x{.85cm}x{.85cm}x{.85cm}x{.85cm}x{.85cm}x{.85cm}x{.85cm}x{.85cm}x{.85cm}}
\noalign{\hrule height 1pt} %
& $t_{00}$ & $t_{01}$ & $t_{10}$ & $t_{11}$ & $t_{20}$ & $t_{21}$ & $t_{30}$ & $t_{31}$ \tabularnewline\noalign{\hrule height 0.25pt} 
$\tilde{\alpha}_1\tilde{\beta}_2$ & $-\epsilon t_{10}$ & $t_{11}$ &  $t_{00}$ & $\epsilon t_{01}$ & $t_{31}$ & $-\epsilon t_{30}$ & $t_{21}$ & $\epsilon t_{20}$ \tabularnewline\noalign{\hrule height 0.25pt}
$\tilde{\alpha}_2\tilde{\beta}_3$ &  $-\epsilon t_{20}$ &  $t_{21}$ & $t_{31}$ & $\epsilon t_{30}$ &  $t_{00}$ & $\epsilon t_{01}$ & $t_{11}$ & $-\epsilon t_{10}$ \tabularnewline\noalign{\hrule height 0.25pt}
$\tilde{\alpha}_3\tilde{\beta}_1$ &  $-\epsilon t_{30}$ &  $t_{31}$ & $t_{21}$ & $-\epsilon t_{20}$ & $t_{11}$ & $\epsilon t_{10}$ & $t_{00}$ & $\epsilon t_{01}$ \tabularnewline\noalign{\hrule height 1pt}
\end{tabular}
\end{center}
\medskip
\noindent
Then $\tilde{G} \cong \ZZ/2 \times Q_8$, where $Q_8$ denotes the standard quaternion group.
Consider also the $\tilde{G}$-invariant hypersurface of multi-degree $(1,1,1,1)$ given by
\[
Z_1:=(t_{01}t_{10}t_{20}t_{30}+t_{00}t_{11}t_{21}t_{31}=0)
\]
and the $\tilde{G}$-invariant surfaces $\tilde{T}$ cut out on $Z_1$ by the multi-degree $(2,2,2,2)$ hypersurfaces given by
\[
Z_2= \sum_{i=0}^3 \nu_i \left ( t_{i0}^2\prod_{j\neq i} t_{j1}^2+t_{i1}^2 \prod_{j\neq i} t_{j0}^2 \right ) - 2\nu_4\hspace{-.5cm}
\sum_{\scriptscriptstyle\text{$a+b+c+d$ even}} (-1)^{\frac{b+c+d-a}2} t_{0a}^2t_{1b}^2t_{2c}^2t_{3d}^2 =0
\]
for $\nu_0,\nu_1,\nu_2,\nu_3,\nu_4\in\CC$. Then, if the $\nu_i$
are general, $\tilde{G}$ acts freely on $\tilde{T}$
and the quotient $S=\tilde{T}/\tilde{G}$ is the canonical model of a
surface of general type with $p_g=0$, $K^2=3$ and $\pi_1(S)\cong \ZZ/2 \times Q_8$.
The family obtained in this way describes a $4$-dimensional locus in the moduli space of the surfaces of general type,
containing the tertiary Burniat surfaces, for which $-\nu_0=\nu_1=\nu_2=\nu_3$.
\end{theorem}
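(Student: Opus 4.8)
The plan is to regard the statement as a conjunction of several assertions and to prove, or reduce to the earlier theorems, each in turn. I would begin with the purely group-theoretic claim $\tilde{G}\cong\ZZ/2\times Q_8$. Each of the three listed generators is a monomial automorphism of $(\PP^1)^4$, permuting the four factors and rescaling coordinates, so I would encode each as an $8\times 8$ monomial matrix acting on the homogeneous coordinates and compute the squares $(\tilde{\alpha}_1\tilde{\beta}_2)^2$, $(\tilde{\alpha}_2\tilde{\beta}_3)^2$, $(\tilde{\alpha}_3\tilde{\beta}_1)^2$ and the pairwise products, always modulo the scaling on each $\PP^1$. Recording the resulting relations, checking that the generated group has order $16$, that its derived subgroup is the central $\ZZ/2$, and that it contains a quaternionic subgroup $Q_8$ together with a complementary central involution identifies the group with $\ZZ/2\times Q_8$. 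This is a routine finite computation.

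Next I would verify the $\tilde{G}$-invariance of $Z_1$ and of the five-parameter family $Z_2$. This is a direct substitution: applying each generator to $t_{01}t_{10}t_{20}t_{30}+t_{00}t_{11}t_{21}t_{31}$ and to the quadratic expression defining $Z_2$, and confirming that each is sent to a nonzero scalar multiple of itself. The sign conventions chosen in the statement (the factor $(-1)^{(b+c+d-a)/2}$ and the fixed square root $\epsilon$ of $-1$) are precisely what force invariance, so the check reduces to bookkeeping of signs. Granting this, $\tilde{T}=Z_1\cap Z_2$ inherits a $\tilde{G}$-action.

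The geometric content I would then extract from the earlier constructions. Using the unprojection description of $V$ and the structure of its Fano double cover, I would identify $\tilde{T}$ with the universal cover of the abstract surface $T$, with the central involution of $\tilde{G}$ acting as the deck transformation of $\tilde{T}\to T$ and $\tilde{G}/\{\pm1\}\cong(\ZZ/2)^3=G$ acting on $T$; this is Theorem~\ref{thm: universal cover of $T$}. Freeness of the $\tilde{G}$-action for general $\nu_i$, hence smoothness of $S=\tilde{T}/\tilde{G}$, I would take from Theorem~\ref{thm: free action on $T$}. By adjunction $K_{\tilde{T}}=\OO(1,1,1,1)|_{\tilde{T}}$ is ample, so $\tilde{T}$ is its own canonical model, $K_S$ is ample, and $S$ equals its canonical model; the intersection computation $K_{\tilde{T}}^2=2\,H^4=48$ with $H=\OO(1,1,1,1)$ gives $K_S^2=48/16=3$, while $\chi(\OO_S)=\chi(\OO_T)/8=1$ together with $q(S)=0$ forces $p_g(S)=0$. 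For the fundamental group I would show $\tilde{T}$ is simply connected by a Lefschetz-hyperplane argument for the successive ample sections $Z_1\subset(\PP^1)^4$ and $\tilde{T}=Z_2|_{Z_1}\subset Z_1$, the ambient $(\PP^1)^4$ being simply connected; the free quotient then yields $\pi_1(S)\cong\tilde{G}\cong\ZZ/2\times Q_8$.

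Finally, for the moduli statement I would count parameters: the $\nu_i$ vary in a $\PP^4$, bounding the family by $4$, and I would invoke Theorem~\ref{thm: moduli} to conclude that the induced map to the moduli space is generically finite, so the image is genuinely $4$-dimensional. That the locus $-\nu_0=\nu_1=\nu_2=\nu_3$ recovers the one-dimensional (hence codimension-three) family of tertiary Burniat surfaces is Theorem~\ref{thm: The Burniat pencil}. I expect the main obstacle to be exactly the identification in the third paragraph --- matching the explicit complete intersection in $(\PP^1)^4$ with the abstractly built Fano cover and confirming that $\tilde{G}$ is the lift of $G$ extended by the deck group --- and, secondarily, upgrading the dimension bound to a true $4$-dimensional locus by establishing generic finiteness of the moduli map rather than merely bounding its source.
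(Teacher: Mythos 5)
Your proposal is correct and follows essentially the same route as the paper: Theorem~\ref{thm: main} is explicitly a synthesis of Lemma~\ref{lemma: lifting G} (the identification $\tilde{G}\cong\ZZ/2\times Q_8$), Theorem~\ref{thm: free action on $T$} (freeness and the invariants), Theorem~\ref{thm: universal cover of $T$} (the complete intersection $Z_1\cap Z_2$ as universal cover and the Lefschetz argument for $\pi_1$), Theorem~\ref{thm: moduli} (finiteness of the moduli map, hence a $4$-dimensional unirational image), and Theorem~\ref{thm: The Burniat pencil} (the locus $-\nu_0=\nu_1=\nu_2=\nu_3$), which is precisely how you assemble it. Your added verifications --- the monomial-matrix computation for $\tilde{G}$, the direct invariance check for $Z_1,Z_2$, and the adjunction computation $K_{\tilde{T}}^2=2H^4=48$ giving $K_S^2=3$ --- are consistent with, and minor elaborations of, the paper's own derivations.
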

\noindent

Note that the fundamental group of tertiary Burniat surfaces has already been computed in 
\cite{bcburniat1}, which fixes a mistake in a previous computation in \cite{peters}.  
The study of surfaces of general type with $p_g=0$, $K^2=3$ and fundamental 
group of order $16$ is of special interest as, according to a conjecture of 
M.~Reid, this number should be the maximum order of their (algebraic)
fundamental groups.

\medskip

\noindent
Our construction gives a $4-$dimensional stratum of the moduli space of the surfaces of general type 
containing the tertiary Burniat surfaces. In \cite{bcburniat3}, Bauer and Catanese prove that the 
irreducible component of the moduli space of surfaces of general type containing the 
tertiary Burniat surfaces has dimension 4, and they construct a proper open set of it; it follows
that also our family forms an open set of the same component. 
We expect that our family is not a proper subset, covering the full irreducible component. It is also 
reasonable to guess that this irreducible component is a full connected component of the moduli space.

\medskip

We now explain the motivation for our construction.
Let $T$ be a minimal regular surface of general type with $\chi(\OO_T)=8$ and \mbox{$K_T^2=24$}. Assume that  $T\in\ls{-2K_V}$, 
where $V$ is a $\QQ$-Fano $3$-fold with $n$ singular points of type $\frac{1}{2}(1,1,1)$. Then $h^0(-K_V)=p_g=7$,
 $-K_V^3=K^2_T/2=12$ and by the orbifold Riemann--Roch formulas (\emph{cf.}~\cite{ABR,BS}), 
$4p_g=K^2_T+12-n$, \emph{i.e.}, $n=8$. This leads to a candidate $3$-fold $V$ anticanonically embedded in
$\PP(1^7,2^8)$ that, by the Graded Ring Database \cite{Br}, projects to a complete intersection $W_{2,2,2}\subset \PP^6$.
On the other hand, suppose that $T$ is equipped with a free \mbox{$G=(\ZZ/2)^3$} action.
By the Lefschetz Holomorphic Fixed Point Formula we know
the character of the representation of $G$ on $H^0(nK_T)$. 
Throughout the paper $a,b,c,d$ vary in $\ZZ/2=\{0,1\}$, and we wil use the notation $0'=1$ and $1'=0$. 
Writing $\chi_{abc}$, for the irreducible representations of $G$, we get:
\begin{equation}\label{eq: L194}
\renewcommand{\arraystretch}{1.4}
\begin{array}{c}
H^0(K_T) = \bigoplus_{(a,b,c)\in G\setminus\set{(0,0,0)}} \chi_{abc}, \quad  H^0(2K_T) =  \bigoplus_{(a,b,c)\in G} \chi^{\oplus 4}_{abc}, \\
\Sym^2 H^0(K_T) = \chi_{000}^{\oplus 7} \oplus \chi_{100}^{\oplus 3}\oplus\chi_{010}^{\oplus 3}\oplus\chi_{001}^{\oplus 3}\oplus\chi_{110}^{\oplus 3}\oplus\chi_{101}^{\oplus 3}\oplus\chi_{011}^{\oplus 3}\oplus\chi_{111}^{\oplus 3}.
\end{array}
\end{equation}
We deduce that the canonical ring of $T$,
\[
R(T,K_T)=\bigoplus_{n\in \NN} H^0(nK_T),
\]
on which $G$ acts, has $3$ invariant quadric relations and needs $7$ new
gene\-ra\-tors in degree $2$, one for each of the nontrivial rank $1$ representations $G$.
This agrees with the properties of $V$. The anticanonical ring $R(V,-K_V)$ has $8$ generators
of degree $2$ and $3$ quadric relations between the degree $1$ generators, coming from the defining equations of $W_{2,2,2}\subset \PP^6$.
Note that $R(T,K_T)$ can be obtained from $R(V,-K_V)$ by taking a quotient by a degree $2$ regular element.
\medskip

\noindent
As in \cite{NP1}, the first goal is to construct $V$ from $W_{2,2,2}\subset \PP^6$ using parallel unprojection, which is to say,
unproject all at once $8$ divisors in $W$ satisfying sufficiently general conditions.
\medskip

\noindent
The second goal is to set up an action of $G\cong(\ZZ/2)^3$ on $\PP(1^8,2^8)$ that leaves $Y$, $V$ and $T$ invariant
and is fixed point free on $T$. With this in mind we establish a $(\ZZ/2)^6$ action on $\PP(1^8,2^8)$ which leaves
$Y$ and $V$ invariant and for which there exists a subgroup $H\subset (\ZZ/2)^6$ isomorphic to $(\ZZ/2)^5$ which
leaves $T$ invariant. We then show that $H$ has a subgroup $G\cong (\ZZ/2)^3$ which acts fixed point freely on $T$. The upshot
is that the quotient group $H/G\cong (\ZZ/2)^2$ acts on $S:=T/G$ and the quotient map coincides
with the bicanonical map of $S$. (\emph{Cf}.~Proposition~\ref{prop: the bicanonical map of S}.)

\medskip

The paper is divided up as follows. In Section~\ref{sec: construction} we describe the construction of
$Y\subset \PP(1^8,2^8)$ via parallel unprojection of a $4$-fold complete intersection of $3$ quadrics
$X\subset \PP^7$ using the format introduced in \cite{NP2}. We obtain a $\QQ$-Fano $3$-fold $V\subset Y$
by taking a hypersurface section of degree $1$ of $V$ and the surface $T\subset V$ by taking a hypersurface
section of degree $2$ of $V$. The bulk of this section is concerned with the study of the geometry of $V$ (with
emphasis on its singularities) and setting up of the group action described above.
In Section~\ref{sec: double cover}, we show that $Y$ is the quotient of $\PP^1\times\PP^1\times\PP^1\times \PP^1$
by an involution and we lift the action of $G$ to an action of $\tilde{G}= \ZZ/2 \times Q_8$ on 
$\PP^1\times\PP^1\times\PP^1\times \PP^1$.
We obtain a description of our surfaces as quotient by a fixed point free action of $\tilde{G}$ on
a complete intersection in $\PP^1\times\PP^1\times\PP^1\times \PP^1$, which enables the computation of
their fundamental group. Finally we show that the family constructed is unirational and has $4$ moduli. 
In Section~\ref{sec: bicanonical}
we carry out a detailed study of the bicanonical map of $S=T/G$. We show that the bicanonical map is a bidouble cover of a
singular cubic surface $S_3\subset \PP^3$ and compute the branch loci of this map. Via a birational map $S_3\drt \PP^2$ we
reinterpret this bidouble cover as a bidouble cover of $\PP^2$ and use it to show that the family of surfaces constructed
contains the family of tertiary Burniat surfaces.

\section{The Construction of $S$}\label{sec: construction}

Consider $\PP^7$ with homogeneous coordinates $x_{00}$, $x_{01}$, $x_{10}$, $x_{11}$, $x_{20}$, $x_{21}$, $x_{30}$, $x_{31}$ and
let $X\subset \PP^7$ be the $4$-fold complete intersection of $3$ quadrics given by:
\begin{equation}\label{eq: quadrics}
x_{00}x_{01}=x_{10}x_{11}=x_{20}x_{21}=x_{30}x_{31}.
\end{equation}
Notice that $X$ contains the $16$ linear spaces given by:
\begin{equation}\label{eq:all the planes}
H_{abcd}=(x_{0a}=x_{1b}=x_{2c}=x_{3d}=0),\quad  a,b,c,d\in \set{0,1}
\end{equation}
all of which have codimension $1$ in $X$. These $16$ linear spaces 
can be thought of as the vertices of the $4$-cube, 
by identifying their equations (\ref{eq:all the planes}) with the vertex $(a,b,c,d)$.
An edge between two vertices means that the intersection of the corresponding linear spaces
has dimension $\geq 2$ or, equivalently, that the union of the sets of equations of the linear spaces 
does not contain a regular sequence of length $6$.

Since the homogenous coordinate rings of $X$ and of each linear space are Gorenstein
graded rings, we can use Kustin--Miller parallel unprojection
on a subset of the set of linear spaces in (\ref{eq:all the planes}).
Indeed the format of the equations of $X$ was \mbox{studied} in \cite[Section 3]{NP2},
where a sufficient condition for the existence of the parallel unprojection was given.
In our case, a subset of linear spaces can be unprojected if the defining 
equations of any two linear subspaces in it contain a regular sequence of length $6$.
Since the $4$-cube is a bipartite graph, there are $2$ maximal subsets with this property. These subsets 
yield isomorphic constructions, thus we shall fix one.  
Let $\LL$ denote the subset of $\set{0,1}^4$ consisting of the $4$-tuples with even sum and consider
the corresponding subset of linear spaces: $\set{H_{abcd}  \mid (a,b,c,d)\in \LL }$.
Recall that throughout the paper we shall be using the following shorthand notation: $0'=1$ and $1'=0$.
\begin{remark}\label{remark: SingX}
Notice that $H_{abcd} \cap H_{a'b'c'd'}=\emptyset$. Any other pair of distinct elements in $\{H_{abcd}  \mid (a,b,c,d)\in \LL \}$
intersect along a line. These $24$ lines form the singular locus of $X$.
\end{remark}

According to \cite[Lemma~3.2]{NP2} we can perform the parallel unprojection of these
$8$ linear spaces in $X$, to obtain a projectively Gorenstein
subscheme of a weighted projective space, $Y\subset \PP(1^8,2^8)$, as follows.

\begin{df}\label{def: the unprojection and projection maps}
Consider, for each $(a,b,c,d)\in \LL$ the rational section of $\hol_X(2)$
\begin{equation}\label{eq: definition of y_abcd}
\renewcommand{\arraystretch}{1.7}
\begin{array}{c}
\displaystyle \varphi_{abcd} := \frac{x_{1b'}x_{2c'}x_{3d'}}{x_{0a}} =
\frac{x_{0a'}x_{2c'}x_{3d'}}{x_{1b}} = \frac{x_{0a'}x_{1b'}x_{3d'}}{x_{2c}}  = \frac{x_{0a'}x_{1b'}x_{2c'}}{x_{3d}},
\end{array}
\end{equation}
where the equalities follow from (\ref{eq: quadrics}). 
The divisor of the poles of $\varphi_{abcd}$ is $H_{abcd}$.

We denote by  $\varphi \colon X \drt \PP(1^8,2^8)$ the unprojection map, \emph{i.e.}, 
the rational map  
$$\varphi(x_{00},x_{01},\dots,x_{31})=(x_{00},x_{01},\dots,x_{31},\varphi_{0000}(x_{ia}),
\dots,\varphi_{1111}(x_{ia})).$$ We define $Y:=\varphi(X)$.
\end{df}

\begin{notation}\label{not: pi and y}
We denote accordingly the weight $2$ variables of the ambient weighted projective space by 
$y_{abcd}$: $y_{abcd}$ is 
the variable corresponding to $\varphi_{abcd}(x_{ia})$ in the definition of $\varphi$.
Let $\pi \colon \PP(1^8,2^8)\drt \PP^7$ denote the projection map, \emph{i.e.}, the rational map obtained by forgetting the degree $2$ variables.
\end{notation}

The ideal $J$ of the subvariety $Y\subset \PP(1^8,2^8)$ is generated by the following homogenous
polynomials: the original $3$ quadrics --- given by the difference of two terms in
(\ref{eq: quadrics}) --- $32$ cubics,
given by
\begin{equation}\label{eq: cubic relations}
\begin{array}{c}
\displaystyle y_{abcd}x_{0a}-x_{1b'}x_{2c'}x_{3d'},\quad  y_{abcd}x_{1b}-x_{0a'}x_{2c'}x_{3d'},\\
\displaystyle y_{abcd}x_{2c}-x_{0a'}x_{1b'}x_{3d'},\quad  y_{abcd}x_{3d}-x_{0a'}x_{1b'}x_{2c'},
\end{array}
\end{equation}
for every $(a,b,c,d)\in\LL$; and $28$ quartics, given by
\begin{equation}\label{eq: quartic relations}
y_{a_0a_1a_2a_3}y_{b_0b_1b_2b_3} - \frac{x_{0a_0'}x_{1a_1'}x_{2a_2'}x_{3a_3'}}{x_{ia_i'}x_{ia_i}}
\cdot \frac{x_{0b_0'}x_{1b_1'}x_{2b_2'}x_{3b_3'}}{x_{jb_j'}x_{jb_j}}
\end{equation}
for every distinct $(a_0,a_1,a_2,a_3),(b_0,b_1,b_2,b_3)\in \LL$, where, given $(a_0,a_1,a_2,a_3)$, $(b_0,b_1,b_2,b_3)$ in $\LL$,  $i$ and $j$ are such that $a_i\not = b_i$ and $a_j\not = b_j$, so that the fractional expression of (\ref{eq: quartic relations}) is always a polynomial.

\begin{remark}\label{rmk: unprojection map and iso between open}
The unprojection map $\varphi \colon X\drt Y$ is a birational map between $X$ and $Y$, with inverse $\pi_{|Y} \colon Y \drt X$. Indeed, $\varphi$ induces an isomorphism
\begin{equation}\label{eq: isomorphism between Xminus and Yminus}
X\setminus \pare{\bigcup\nolimits_{abcd\in \LL} H_{abcd}} \rt Y\setminus \pare{\bigcup\nolimits_{abcd\in \LL} \H_{abcd}},
\end{equation}
where $\H_{abcd}$ is the subscheme of $Y$ given by $x_{0a}=x_{1b}=x_{2c}=x_{3d}=0$.
\end{remark}

\begin{notation}\label{not: notation xx, yy, lijab, Sijab}
Firstly we make notation for the coordinate points of $\PP^7$ and $\PP(1^8,2^8)$.
Given $0\leq i\leq 3$ and $a\in \set{0,1}$ we denote by
$\xx_{ia}$ the point of $\PP^7$, or of $\PP(1^8,2^8)$, depending on the context, having all but the coordinate $x_{ia}$ equal to zero. Similarly, given $(a,b,c,d)\in \LL$,
we denote by $\yy_{abcd}\in \PP(1^8,2^8)$ the point defined in an analogous way. Note that the 8 points $\yy_{abcd}$ are the intersection of $Y$ with the singular locus of the ambient space, 
and also the centers of the projection  $\pi_{|Y}$.
Secondly we establish notation for a distinguish set of surfaces in $\PP(1^8,2^8)$.
There are $24$ quartic polynomials in (\ref{eq: quartic relations}) involving
the product of $2$ squares. Such is the case with $y_{0011}y_{0000}-
x_{01}^2x_{11}^2$. This polynomial defines a subscheme, $\S_{11}^{01}$, of dimension $2$ of the
$3$-dimensional projective space $\PP(1^2,2^2)$ with variables $x_{01}$, $x_{11}$, $y_{0011}$, $y_{0000}$ that
we can regard as a subscheme $\S_{11}^{01}\subset\PP(1^8,2^8)$, by setting all but the coordinates
$x_{01},x_{11},y_{0011},y_{0000}$ equal to $0$.
Similarly, given $0\leq i<j \leq 3$ and $a,b\in \set{0,1}$ we denote by $\S_{ab}^{ij}$ the subscheme of $\PP(1^2,2^2)\subset \PP(1^8,2^8)$ defined by the quartic polynomial of (\ref{eq: quartic relations}) involving $x_{ia}^2x_{jb}^2$.
These are $24$ surfaces contained in $Y$.
\end{notation}

\begin{lemma}\label{lemma: bunch of Sijab and Habcd} Set-theoretically,
$\H_{a'b'c'd'}=\set{\yy_{abcd}}\cup \S^{01}_{ab}\cup \S^{02}_{ac}\cup \S^{03}_{ad}\cup \S^{12}_{bc}\cup \S^{13}_{bd}\cup \S^{23}_{cd}$. In particular $\H_{abcd}$ is $2$-dimensional, for all $(a,b,c,d)\in \LL$.
\end{lemma}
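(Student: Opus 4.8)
The plan is to establish the set-theoretic equality by proving the two inclusions separately; the inclusion $\supseteq$ is immediate, and the inclusion $\subseteq$ is obtained by restricting the generators of $J$ to the coordinate subspace $x_{0a'}=x_{1b'}=x_{2c'}=x_{3d'}=0$ and reading off the possible supports of a point of $\H_{a'b'c'd'}$. For $\supseteq$, note first that $\yy_{abcd}$ and each of the six surfaces lie in $Y$ by Notation~\ref{not: notation xx, yy, lijab, Sijab}. It then remains to check that each satisfies the four linear equations defining $\H_{a'b'c'd'}$, which is transparent on coordinates: the weight-one variables occurring on $\S^{01}_{ab}$ are only $x_{0a},x_{1b}$, and since $a'\neq a$ and $b'\neq b$, none of $x_{0a'},x_{1b'},x_{2c'},x_{3d'}$ is among them, so all four vanish on $\S^{01}_{ab}$; the identical check applies to the other five surfaces (whose pairs of weight-one variables together exhaust $x_{0a},x_{1b},x_{2c},x_{3d}$) and trivially to $\yy_{abcd}$.

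The content is the inclusion $\subseteq$. Simultaneously swapping $x_{i0}\leftrightarrow x_{i1}$ in an even number of factors preserves the quadrics (\ref{eq: quadrics}) and the set $\LL$ of unprojected planes, hence extends to an automorphism of $Y$ permuting the $\yy$'s and the $\S$'s compatibly; since the orbit of $(0,0,0,0)$ under such swaps is all of $\LL$, I may assume $(a,b,c,d)=(0,0,0,0)$, so that the free weight-one coordinates on $\H_{1111}$ are $x_{00},x_{10},x_{20},x_{30}$. Let $p\in\H_{1111}$. The key observation is that the four cubic relations (\ref{eq: cubic relations}) attached to $y_{1111}$ restrict on $\H_{1111}$ to the vanishing of the four triple products $x_{10}x_{20}x_{30}$, $x_{00}x_{20}x_{30}$, $x_{00}x_{10}x_{30}$, $x_{00}x_{10}x_{20}$, which forces at most two of the free coordinates to be nonzero at $p$. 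Restricting the remaining cubics of (\ref{eq: cubic relations}) shows that each of the other weight-two coordinates is annihilated by a prescribed set of the free coordinates --- all four of them in the case of $y_{0000}$, and the complementary pair in the case of a $y_{pqrs}$ differing from $(0,0,0,0)$ in two positions --- so that the vanishing pattern of the free coordinates at $p$ dictates which $y$-coordinates can be nonzero.

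I would then split according to the number $r\in\set{0,1,2}$ of nonzero free coordinates at $p$. If $r=0$, all weight-one coordinates vanish and every quartic of (\ref{eq: quartic relations}) degenerates to $y_Py_Q=0$, so at most one weight-two coordinate survives and $p$ is one of the eight points $\yy_P$; these are $\yy_{0000}$ together with seven points lying on the surfaces $\S^{ij}_{00}$ (the six weight-two points $\yy_P$ each on the corresponding surface, and $\yy_{1111}$ on all six). If $r=1$ or $r=2$, a few quartics of (\ref{eq: quartic relations}) whose right-hand sides contain a coordinate vanishing on $\H_{1111}$ force all but (respectively, exactly) the $y$-coordinates matching the surviving free coordinates to vanish; the support of $p$ is then contained in the four coordinates cutting out one of the six surfaces $\S^{ij}_{00}$, and since $p\in Y$ it satisfies that surface's defining quartic, whence $p$ lies on it.

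This exhausts $\H_{1111}$ and gives $\subseteq$. Finally, each $\S^{ij}_{ab}$ is a hypersurface in a copy of $\PP(1^2,2^2)$, hence $2$-dimensional, so $\H_{a'b'c'd'}$ --- a union of six such surfaces and a point --- is $2$-dimensional; since complementation permutes $\LL$, this yields $\dim\H_{abcd}=2$ for all $(a,b,c,d)\in\LL$. The only real difficulty is organizational: keeping the index bookkeeping straight when pinning down, in each case, which quartic of (\ref{eq: quartic relations}) has a right-hand side vanishing on $\H_{1111}$ and which surface $\S^{ij}_{00}$ absorbs the resulting point; the reduction to $(0,0,0,0)$ is what keeps this manageable.
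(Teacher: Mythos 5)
Your proof is correct and follows essentially the same route as the paper's: both reduce to a single representative of $\LL$, use the cubic relations (\ref{eq: cubic relations}) attached to the distinguished weight-two variable to force at least two of the four free weight-one coordinates to vanish, and then use the remaining cubics and the quartics (\ref{eq: quartic relations}) to pin down the support of the point and place it on one of the $\S^{ij}_{ab}$ or at a coordinate point. The only differences are cosmetic --- you organize the casework by the number $r$ of nonzero free coordinates where the paper splits on which $y$-coordinates survive, and you make explicit the symmetry reduction that the paper dismisses with ``the proof for the remaining $(a,b,c,d)\in\LL$ is similar.''
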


\begin{proof}
We prove the lemma for $(a,b,c,d)=(0,0,0,0)$. The proof for the remaining $(a,b,c,d)\in\LL$ is similar.
Comparing the definitions of $\H_{abcd}$ in Remark~\ref{rmk: unprojection map and iso between open} and
of $\S_{ab}^{ij}$ and  $\yy_{abcd}$ of Notation~\ref{not: notation xx, yy, lijab, Sijab} it follows that
\[
\S^{01}_{11}\cup \S^{02}_{11}\cup \S^{03}_{11}\cup \S^{12}_{11}\cup \S^{13}_{11}\cup \S^{23}_{11}\cup \set{\yy_{1111}}\subset \H_{0000}.
\]
Conversely, let $\xx \in \H_{0000}$. From the cubic equations (\ref{eq: cubic relations}) involving $y_{0000}$, we see that there exist distinct $i,j \in \set{0,1,2,3}$ such that $x_{i1}=x_{j1}=0$.
Assume that $i=0$ and  $j=1$. If $y_{abcd}=0$, for all $(a,b,c,d)\in\LL \setminus \set{(0,0,0,0),(1,1,0,0)}$,
then $y_{0000}y_{1100} -x_{21}^2x_{31}^2=0$ is the only equation of $Y$ not made trivial.
In this situation $\xx \in \S_{11}^{23}$. Suppose that $y_{abcd}\not =0$ for some
$(a,b,c,d)\in\LL \setminus \set{(0,0,0,0),(1,1,0,0)}$.
Then, from the quartic equations (\ref{eq: quartic relations}) involving $y_{abcd}$ we see that all other weight $2$ variables are zero and, using the cubic equations (\ref{eq: cubic relations})  involving $y_{abcd}$,
that \mbox{$x_{2c}=x_{3d}=0$}. Note that necessarily $(c,d)\neq (0,0)$.
Now, if $(c,d)=(1,0)$ then all variables but $y_{ab10}$ and $x_{31}$ vanish.
In this case, either $(a,b)=(0,1)$ and $\xx \in \S_{11}^{03}$, or $(a,b)=(1,0)$ and $\xx \in\S_{11}^{13}$.
Similarly, if $(c,d)=(0,1)$, $\xx \in \S_{11}^{02} \cup \S_{11}^{12}$. Finally, if $(c,d)=(1,1)$ then,
$\xx = \yy_{1111}$ or $\xx=\yy_{0011}$, and we conclude by observing that $\yy_{0011} \in \S^{01}_{11}$.
The same reasoning applies for any other distinct $i,j\in \set{0,1,2,3}$.
\end{proof}

\begin{prop}\label{prop: Y reduced and irreducible normal...}
$Y$ is a reduced and irreducible normal $4$-dimensional subscheme of $\PP(1^8,2^8)$. Moreover 
$K_Y = \OO_Y(-2)$ and $\deg Y=\deg X+4 = 12$.
\end{prop}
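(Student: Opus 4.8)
The plan is to read off all four assertions from the single structural input that $Y$ is \emph{projectively Gorenstein} --- which is exactly what the parallel unprojection of \cite[Lemma~3.2]{NP2} produces --- together with the birational identification of Remark~\ref{rmk: unprojection map and iso between open} and Serre's criteria. As a preliminary step I would record that $X$ itself is integral of dimension $4$: the three quadrics (\ref{eq: quadrics}) form a regular sequence, so $X$ is a complete intersection of dimension $4$ and degree $2^3=8$, and its binomial ideal is the lattice ideal of the saturated lattice spanned by the three exponent differences (a unimodular $3\times 3$ minor among the generators exhibits saturation), hence prime; thus $X$ is reduced and irreducible, and $\omega_X=\OO_X(-2)$ by adjunction in $\PP^7$.

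Since $Y$ is projectively Gorenstein it is arithmetically Cohen--Macaulay, so it is equidimensional, satisfies $S_2$ (in particular $S_1$), and has no embedded components. By Lemma~\ref{lemma: bunch of Sijab and Habcd} the locus $\bigcup_{\LL}\H_{abcd}$ is $2$-dimensional, so it contains none of the ($4$-dimensional) generic points of $Y$; every generic point of $Y$ therefore lies on the open set $U:=Y\setminus\bigcup_{\LL}\H_{abcd}$, which by Remark~\ref{rmk: unprojection map and iso between open} is isomorphic to $X\setminus\bigcup_{\LL}H_{abcd}$. As $X$ is integral, $U$ is integral, so $Y$ has a single generic point and is generically reduced; combined with $S_1$ this forces $Y$ reduced, and with equidimensionality, irreducible of dimension $4$.

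For normality I would invoke Serre's criterion $R_1+S_2$. The condition $S_2$ is immediate from Cohen--Macaulayness. For $R_1$, note that by Remark~\ref{remark: SingX} the singular locus of $X$ is a union of $24$ lines, each an intersection $H_{abcd}\cap H_{efgh}$ and hence contained in $\bigcup_{\LL}H_{abcd}$; therefore $X\setminus\bigcup_{\LL}H_{abcd}$ is smooth, and via the isomorphism of Remark~\ref{rmk: unprojection map and iso between open} so is $U$. Consequently $\Sing Y\subseteq\bigcup_{\LL}\H_{abcd}$, which has codimension $2$ in $Y$, giving exactly $R_1$; hence $Y$ is normal. The canonical class then follows by restriction: the isomorphism $U\cong X\setminus\bigcup H_{abcd}$ preserves the weight-one coordinates and hence $\OO(1)$, so $\omega_Y|_U\cong\OO_Y(-2)|_U$ (transporting $\omega_X=\OO_X(-2)$); since $Y$ is normal and $Y\setminus U$ has codimension $2$, the Weil divisor class $K_Y$ is determined by its restriction to $U$, giving $K_Y=\OO_Y(-2)$. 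This is an honest line bundle (an even twist is Cartier at the $\tfrac12(1,1,1)$-points $\yy_{abcd}$) and, $Y$ being Gorenstein, it is the dualizing sheaf.

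The remaining, and genuinely computational, point is $\deg Y=12$. Here I would compute the Hilbert series of the Gorenstein ring $R(Y)$ --- assembled either from the explicit generators of $J$ (the three quadrics, $32$ cubics and $28$ quartics) together with the palindromic symmetry of its minimal resolution, or directly from the parallel-unprojection format of \cite{NP2} --- and extract $\deg Y$ as the leading Laurent coefficient, the weights $1^8,2^8$ in the denominator being the delicate bookkeeping. The expected outcome $\deg Y=\deg X+4$ has the transparent geometric reading that each of the $8$ unprojected linear spaces $H_{abcd}\cong\PP^3$ is contracted to a $\tfrac12(1,1,1)$-point $\yy_{abcd}$, each such point contributing $+\tfrac12$ to the ($\QQ$-valued) self-intersection $\OO_Y(1)^4$; adding the $8$ contributions to $\deg X=8$ yields $\deg Y=12$. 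I expect this weighted-degree computation, rather than the Serre-criterion arguments, to be the main obstacle.
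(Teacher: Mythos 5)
Your proposal is correct, and its load-bearing skeleton is the same as the paper's: both rest on the projective Gorensteinness of $\Run$ supplied by \cite{NP2}, the isomorphism of Remark~\ref{rmk: unprojection map and iso between open}, and the codimension-two bound on the singular locus coming from Remark~\ref{remark: SingX} together with Lemma~\ref{lemma: bunch of Sijab and Habcd}, fed into Serre's criterion. The differences are in where the paper cites theory and you re-derive. For ``reduced and irreducible'' the paper simply applies \cite[Theorem~18.15]{Ei} to $\Run$: the hypotheses $R_1+S_2$ give a finite product of normal domains, and since $\Run$ is positively graded with degree-zero part $\CC$ it has no nontrivial idempotents, so it is a normal domain outright --- note that this bypasses your preliminary integrality of $X$ entirely. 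Your generic-point route also works, but your lattice-ideal step is slightly too quick: saturation of the lattice makes the \emph{lattice ideal} prime, while the three binomials of \eqref{eq: quadrics} a priori generate only an ideal contained in it; you should add that the complete intersection is unmixed of the same height with no components inside coordinate hyperplanes (or, more simply, that it is generically smooth and $S_1$), after which equality and primeness follow. For $K_Y=\OO_Y(-2)$ the paper reads the answer off the unprojection computation of the canonical module, $\omega_{\Run}\cong\Run(-2)$; your alternative --- transporting $\omega_X=\OO_X(-2)$ across the big open set and extending over the codimension-two complement, using normality and the fact that $\OO(2)$ is honestly invertible on $\PP(1^8,2^8)$ --- is a legitimate substitute.

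The one place your write-up is a plan rather than a proof is the degree, which the paper outsources to \cite[Proposition~3.4]{NP2}. Your plan does succeed, and more easily than you fear: the cubic relations \eqref{eq: cubic relations} rewrite $y_{abcd}x_{0a}$, $y_{abcd}x_{1b}$, $y_{abcd}x_{2c}$, $y_{abcd}x_{3d}$ in terms of the $x$-variables, and the quartics \eqref{eq: quartic relations} rewrite every mixed product $y_{abcd}\,y_{a_0a_1a_2a_3}$ (only the pure powers $y_{abcd}^k$ survive), so that as a graded vector space
\[
\Run\;\cong\;R\;\oplus\;\bigoplus_{abcd\in\LL}\ \bigoplus_{k\ge1} y_{abcd}^k\,\CC[x_{0a'},x_{1b'},x_{2c'},x_{3d'}],
\]
(the independence of these monomials being part of the unprojection package of \cite{NP2}). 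Each summand contributes $t^{2}/\bigl((1-t^{2})(1-t)^{4}\bigr)$ to the Hilbert series, so
\[
\lim_{t\to1}(1-t)^{5}P_{\Run}(t)\;=\;\lim_{t\to1}\left((1+t)^{3}+8\,\frac{t^{2}}{1+t}\right)\;=\;8+8\cdot\tfrac12\;=\;12,
\]
using $P_R(t)=(1-t^{2})^{3}/(1-t)^{8}$. This is precisely your heuristic ``$+\tfrac12$ per unprojected $\PP^{3}$'' made rigorous, and it completes the only missing step.
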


\begin{proof}
Let $R$ denote the coordinate ring of $X$. The fact that $\dim Y = 4$ is a
consequence of the fact that $\dim \Run = \dim R = 4$, coming from the general
theory of Kustin--Miller unprojection. However it is also a consequence of the
isomorphism (\ref{eq: isomorphism between Xminus and Yminus}) and Lemma~\ref{lemma: bunch of Sijab and Habcd}.
$\Run$ is obtained as an unprojection of R, that has canonical
module equal to $R(-2)$. Hence $\Run$ is Gorenstein and has a canonical module equal to $\Run(-2)$, \emph{cf}.~\cite{NP2}.
In view of Remark~\ref{remark: SingX}, isomorphism (\ref{eq: isomorphism between Xminus and Yminus}) 
and Lemma~\ref{lemma: bunch of Sijab and Habcd},
$\codim \Sing Y \geq 2$. Since $\Run$ is Cohen--Macaulay we deduce that $\Run$ is
a normal domain, \emph{cf}.~\cite[Theorem~18.15]{Ei}. Hence $Y$ is a reduced and irreducible normal subscheme of $\PP(1^8,2^8)$. That $K_Y=\OO_Y(-2)$
follows from the computation of the canonical module of $\Run$. By \cite[Proposition~3.4]{NP2},
$\deg Y=\deg X+4 = 12$.
\end{proof}

We can now define the key variety $V$. This variety is obtained intersecting $Y$ with the
hypersurface given by $x_{00}+x_{01}=0$. The reason for this choice of degree $1$ polynomial
will be clear from the action of $G\cong (\ZZ/2)^{\oplus 3}$ on $V$ that we describe below.
We will regard $V$ as a subvariety of $\PP(1^8,2^8)$ defined by the ideal
$J+(x_{00}+x_{01})$, \emph{i.e.},
the ideal generated by $x_{00}+x_{01}$ and the polynomials in (\ref{eq: quadrics}), (\ref{eq: cubic relations}) and
(\ref{eq: quartic relations}). Since $x_{00}+x_{01}$ is a regular element of $\Run$ and this ring
is Cohen--Macaulay we deduce that $V$ is a $3$-fold of degree $12$. Clearly, $V$
is the parallel unprojection of the $8$ planes $\Pi_{abcd}:=H_{abcd}\cap (x_{00}+x_{11}=0)$ in the
$3$-fold $W=X\cap(x_{00}+x_{01}=0)$. The following diagram shows the construction so far.
\begin{equation}
\xymatrix{
V\ar@{-->}[d]\ar@<-.6ex>@{^{(}->}[r]& Y\subset \PP(1^8,2^8)\ar@<-6ex>@{-->}^{\pi_{|Y}}[d]\\
W\ar@<-.6ex>@{^{(}->}[r]& X\subset \PP^7\phantom{Aaaa..}\\
}
\end{equation}

\begin{prop}\label{prop: singularities of V}
The singular locus of $V=Y\cap(x_{00}+x_{01}=0)$ consists of
$14$ points, $8$ quotient singularities of type $\frac{1}{2}(1,1,1)$
at the points $\yy_{abcd}$ and $6$ isolated singularities locally analytically
isomorphic to the vertex of a cone over the Del Pezzo surface
$\PP^1 \times \PP^1 \subset \PP^8$ at the points $\xx_{ia}\in
\PP(1^8,2^8)$, for $i>0$.
\end{prop}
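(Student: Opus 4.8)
The plan is to compute $\Sing V$ by a purely local analysis in the standard affine charts of the weighted projective space $\PP(1^8,2^8)$, distinguishing the charts centred at a weight-$1$ coordinate point $\xx_{ia}$ from those centred at a weight-$2$ coordinate point $\yy_{abcd}$. Recall from Proposition~\ref{prop: Y reduced and irreducible normal...} that $Y$ is normal with $\codim \Sing Y\geq 2$, and that by Remark~\ref{remark: SingX} and the isomorphism (\ref{eq: isomorphism between Xminus and Yminus}) the fourfold $Y$ is smooth away from $\bigcup\nolimits_{\LL}\H_{abcd}$, since the singular lines of $X$ all lie on $\bigcup\nolimits_{\LL}H_{abcd}$. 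As $x_{00}+x_{01}$ is a regular element of the Cohen--Macaulay ring $\Run$, the variety $V$ is Cohen--Macaulay of pure dimension $3$, and it can be singular only along $V\cap\bigcup\nolimits_{\LL}\H_{abcd}$ or at points of the smooth locus of $Y$ where the hyperplane $x_{00}+x_{01}=0$ is tangent. The symmetry of the construction under permutations of the indices $1,2,3$ and the swaps $x_{i0}\leftrightarrow x_{i1}$ (all of which fix the hyperplane) reduces the verification to one representative in each orbit.

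First I would treat the eight points $\yy_{abcd}$, $(a,b,c,d)\in\LL$. In the chart $y_{abcd}\neq 0$ the ambient space is the quotient $\CC^{15}/\mu_2$, with $\mu_2$ acting by $-1$ on the eight weight-$1$ ratios and trivially on the seven other weight-$2$ ratios. Setting $y_{abcd}=1$, the four cubics (\ref{eq: cubic relations}) attached to $y_{abcd}$ solve $x_{0a},x_{1b},x_{2c},x_{3d}$ as cubic monomials in the four odd variables $x_{0a'},x_{1b'},x_{2c'},x_{3d'}$, while the quartics (\ref{eq: quartic relations}) express all remaining weight-$2$ variables; hence near $\yy_{abcd}$ the fourfold $Y$ is $\CC^4/\mu_2=\frac{1}{2}(1,1,1,1)$, with coordinates the four odd variables. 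Since $x_{0a'}$ is free, the equation $x_{00}+x_{01}=0$ has the shape $x_{0a'}+(\text{cubic})=0$, whose linear part $x_{0a'}$ is nonzero; it therefore cuts a smooth $\mu_2$-invariant hypersurface, and the quotient of the three remaining odd coordinates is exactly $\frac{1}{2}(1,1,1)$. This accounts for the eight quotient singularities.

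The heart of the matter is the analysis at the six points $\xx_{ia}$ with $i>0$; by symmetry I would do $\xx_{10}$. The chart $x_{10}\neq 0$ is smooth ($\cong\CC^{15}$), with coordinates $u_{jb}=x_{jb}/x_{10}$ and $v_{abcd}=y_{abcd}/x_{10}^2$. Imposing $u_{01}=-u_{00}$ and using the quadrics together with the four cubics of the form $y_{abcd}x_{10}=\cdots$ (those with $b=0$), I would eliminate $u_{01}$, $u_{11}$ and the four variables $v_{abcd}$ with $b=0$, leaving the nine coordinates $u_{00},u_{20},u_{21},u_{30},u_{31},v_{1100},v_{0110},v_{0101},v_{1111}$ --- precisely the number of sections of $\OO(2,2)$ on $\PP^1\times\PP^1$. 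The residual relations are the two quadrics $u_{20}u_{21}=u_{30}u_{31}=-u_{00}^2$, twelve surviving cubics, and the six quartics $v_{1100}v_{1111}=v_{0110}v_{0101}=u_{00}^2$, $v_{1100}v_{0110}=u_{31}^2$, $v_{1100}v_{0101}=u_{21}^2$, $v_{1111}v_{0110}=u_{20}^2$, $v_{1111}v_{0101}=u_{30}^2$. Under the identification of the nine coordinates with the monomials $s_0^{2-p}s_1^{p}t_0^{2-q}t_1^{q}$ (for instance $v_{1100}\mapsto s_0^2t_0^2$, $v_{1111}\mapsto s_1^2t_1^2$, $u_{00}\mapsto \pm s_0s_1t_0t_1$), these are exactly the quadratic relations defining $\PP^1\times\PP^1\subset\PP^8$. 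As this Veronese cone is reduced, irreducible of dimension $3=\dim V$, and contains $V$ locally, the two coincide; thus $V$ is analytically the cone over the anticanonically embedded $\PP^1\times\PP^1\subset\PP^8$ at $\xx_{10}$, and likewise at the remaining five points.

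Finally I would rule out further singularities. Off $\bigcup\nolimits_{\LL}\H_{abcd}$ the isomorphism (\ref{eq: isomorphism between Xminus and Yminus}) identifies $V$ with $W\setminus\bigcup\nolimits_{\LL}H_{abcd}$, where $W=X\cap(x_{00}+x_{01}=0)=\set{x_{10}x_{11}=x_{20}x_{21}=x_{30}x_{31}=-x_{00}^2}\subset\PP^6$; a direct Jacobian computation shows $W$ is smooth there, so $V$ has no singularities outside $\bigcup\nolimits_{\LL}\H_{abcd}$. By Lemma~\ref{lemma: bunch of Sijab and Habcd} the remaining locus $V\cap\bigcup\nolimits_{\LL}\H_{abcd}$ consists of the points $\yy_{abcd}$ together with the rational curves cut on the surfaces $\S^{ij}_{ab}$ by $x_{00}+x_{01}=0$; a tangent-space computation in the appropriate weight-$1$ charts shows $Y$ (hence $V$) is smooth at the generic point of each such curve and that the only singular points of $V$ met on them are the $\xx_{ia}$ with $i>0$. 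Combined with the two local models this yields exactly the claimed $8+6=14$ singularities. I expect the main obstacle to be the identification at the $\xx_{ia}$: carefully eliminating variables and recognising the residual ideal as the full Veronese ideal of $\PP^1\times\PP^1\subset\PP^8$ (rather than a degeneration), and verifying irreducibility and dimension so that containment forces equality. The completeness step is conceptually lighter but combinatorially heavy, as one must check smoothness along every stratum of the base locus.
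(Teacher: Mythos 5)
Your two local models are sound and, in substance, reproduce the paper's own computations: at $\yy_{abcd}$ your elimination is the paper's chart analysis (the paper phrases it as an isomorphism onto a neighbourhood of $(0,0,0,1)\in\PP(1^3,2)$, you phrase it upstairs in the $\mu_2$-cover, which is the same thing); at $\xx_{10}$ your nine-variable elimination with the $2+12+6=20$ residual quadratic relations is a chart-level version of the paper's identification of $Y$ near $\xx_{10}$ with the cone over the $2$-Veronese of $\PP^3$ (the $2\times 2$ minors of a symmetric $4\times 4$ matrix), followed by the slice $x_{00}+x_{01}=0$, which restricts the Veronese to the smooth quadric $\PP^1\times\PP^1$ embedded by $\OO(2,2)$ in $\PP^8$; and your containment-plus-dimension argument forcing local equality with the cone is legitimate because $V$ is reduced of pure dimension $3$.

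The genuine gap is in your completeness step, for two concrete reasons. First, your description of $V\cap\bigcup_{\LL}\H_{abcd}$ as ``the points $\yy_{abcd}$ together with the rational curves cut on the $\S^{ij}_{ab}$ by $x_{00}+x_{01}=0$'' is false: on the $12$ surfaces $\S^{ij}_{ab}$ with $1\leq i<j\leq 3$ both coordinates $x_{00}$ and $x_{01}$ vanish identically, so the hyperplane does not cut a curve there --- these surfaces lie \emph{entirely} inside $V$ (indeed the paper exploits this elsewhere, e.g.\ $\S^{12}_{11}\cap T$ is positive-dimensional in the proof of Proposition~\ref{prop: the branch locus of the bicanonical map}). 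Your curve-by-curve tangent-space check therefore misses $2$-dimensional strata of the suspicious locus. Second, the parenthetical inference ``$Y$ (hence $V$) is smooth'' is invalid: smoothness of $Y$ at a point does not pass to the hyperplane section $V$, and it is precisely along these $12$ surfaces, which the hyperplane \emph{contains}, that tangency of $(x_{00}+x_{01}=0)$ to $Y$ is the live danger; moreover smoothness ``at the generic point of each such curve'' would in any case be insufficient, since the Proposition must pin down $\Sing V$ at every point. The paper avoids both problems with a single global computation that your plan lacks: on the affine cone $\Va\subset\AA^{16}$, suitable $12\times 12$ minors of the Jacobian evaluate to $\pm y_{abcd}^{11}$ for each $(a,b,c,d)\in\LL$, so every singular point of $V$ other than the eight $\yy_{abcd}$ must have all $y_{abcd}=0$; on each $\S^{ij}_{ab}$ the quartic $yy'=x_{ia}^2x_{jb}^2$ then forces $x_{ia}x_{jb}=0$, leaving only the coordinate points $\xx_{ia}$ with $i>0$ (the hyperplane excludes $\xx_{00}$ and $\xx_{01}$). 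To repair your argument, either adopt this minor computation, or extend your tangent-space verification from the proposed curves to the whole of each of the $12$ surfaces $\S^{ij}_{ab}$, $1\leq i<j\leq 3$, contained in $V$.
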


\begin{proof}
Consider $W=X\cap(x_{00}+x_{01}=0)$.
The variety $W$ is smooth away from $\cup_{abcd\in \LL} H_{abcd}$.
Since $\pi$ is an isomorphism away from $\cup_{abcd\in \LL} \H_{abcd}$
(\emph{cf.}~Remark~\ref{rmk: unprojection map and iso between open}), we deduce that
\begin{equation}\label{eq: L572}
\Sing(V)\subset V \cap (\cup_{abcd\in \LL} \H_{abcd})=V\cap (\cup \S^{ij}_{ab}).
\end{equation} We start by analyzing the points $\xx$
of $\Sing(V)$ in the locus $\set{\yy_{abcd} \mid (a,b,c,d)\in \LL}$.
We assume, without loss of generality, that $\xx= \yy_{0000}$. Consider the affine piece of $V$ given by $y_{0000}=1$.
Then, using the quartic equations (\ref{eq: quartic relations}) we can eliminate all of the remaining $y_{abcd}$,
using the cubic equations (\ref{eq: cubic relations}) we can eliminate $x_{00},x_{10},x_{20},x_{30}$ and using
$x_{00}+x_{01}$ we can eliminate $x_{01}$. The coordinates
$x_{11}$, $x_{21}$, $x_{31}$, $y_{0000}$ map an analytic neighborhood of $\yy_{0000}\in V$ isomorphically
onto a neighborhood of the point $(0,0,0,1)\in\PP(1^3,2)$, which is a quotient singularity of type
$\frac{1}{2}(1,1,1)$.
\smallskip

\noindent
Suppose now that $\xx \in \Sing(V)\setminus \set{\yy_{abcd} \mid (a,b,c,d)\in \LL}$. Let $\Va\subset \AA^{16}$ denote the affine cone of $V$. Among the equations of $\Va$, besides $x_{00}+x_{01}=0$, we find the $7$ quartic equations $y_{abcd}y_{0000}-\cdots=0$, plus
\[\renewcommand{\arraystretch}{1.3}
\begin{array}{c}
y_{0000}x_{00} -x_{11}x_{21}x_{31}=0, \quad y_{0000}x_{10} -x_{01}x_{21}x_{31}=0, \\
y_{0000}x_{20} -x_{01}x_{11}x_{31}=0, \quad y_{0000}x_{30} -x_{01}x_{11}x_{21}=0.
\end{array}
\]
Let us take the $12\times 12$ minor of the Jacobian matrix of the ideal defining $\Va$ of the gradients of these $12$ polynomials with respect to the variables $x_{01}$, $y_{abcd}$ for $(a,b,c,d)\in \LL\setminus\set{(0,0,0,0)}$ and $x_{00},x_{10},x_{20},x_{30}$. This minor is equal to $\pm y_{0000}^{11}$, where the sign depends on the order we give to the equations and to the variables. Similarly we can find minors of the form $\pm y^{11}_{abcd}$, for all $(a,b,c,d)\in \LL$. Hence if $\xx \in  \Sing(V)\setminus \set{\yy_{abcd} \mid (a,b,c,d)\in \LL}$ then $y_{abcd}=0$, for all $(a,b,c,d)\in \LL$. From (\ref{eq: L572}) and Lemma~\ref{lemma: bunch of Sijab and Habcd}, we deduce
$\xx \in \set{\xx_{10},\xx_{11},\xx_{20},\xx_{21},\xx_{30},\xx_{11}}$.
We assume, without loss of generality, that $\xx = \xx_{10}$. Consider the affine piece of $Y$ given by
$x_{10}=1$. Here, we can use the cubic equations (\ref{eq: cubic relations}) to eliminate all
variables of the form $y_{a0cd}$ and one of the quadrics (\ref{eq: quadrics}) to
eliminate $x_{11}$. After eliminating these $5$ variables, we see that this affine piece
of $Y$ is isomorphic to the subvariety of $\AA^{9}$ defined by the $2\times 2$
minors of the symmetric matrix
\begin{equation*}
\begin{pmatrix}
y_{1100}&
x_{31}&
x_{21}&
x_{00}\\
&
y_{0110}&
x_{01}&
x_{20} \\
&&
y_{0101}&
x_{30}\\
\operatorname{sym}&&&
y_{1111}
\end{pmatrix},
\end{equation*}
with $\xx_{10}$ being identified with the origin of $\AA^{9}$. Hence $\xx_{10}$ is a singular point
of $Y$ locally isomorphic to the cone over the $2$-Veronese embedding of $\PP^3$ in $\PP^9$.
Since $V=Y\cap (x_{00}+x_{01}=0)$ we conclude that $V$ is locally, near $\xx_{10}$, analytically  isomorphic to
a cone over the Del Pezzo surface $\PP^1\times \PP^1 \subset \PP^8$. Similarly for all other points in $\set{\xx_{10},\xx_{11},\xx_{20},\xx_{21},\xx_{30},\xx_{31}}$.
\end{proof}

\begin{corollary}
$V$ is a reduced and irreducible normal $3$-dimensional subscheme of $\PP(1^8,2^8)$.
Moreover $K_V = \OO_V(-1)$ and $\deg(V)= 12$.
\end{corollary}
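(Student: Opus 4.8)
The plan is to deduce each assertion from the corresponding property of $Y$ in Proposition~\ref{prop: Y reduced and irreducible normal...}, together with the control of $\Sing V$ obtained in Proposition~\ref{prop: singularities of V}, exactly mimicking the proof given for $Y$. Write $f=x_{00}+x_{01}$, a homogeneous element of degree $1$ of $\Run$. As already observed in the text preceding Proposition~\ref{prop: singularities of V}, $f$ is a nonzerodivisor on the Cohen--Macaulay ring $\Run$, so the coordinate ring $\Run/(f)$ of the affine cone $\Va$ is again Cohen--Macaulay, of dimension one less; hence $V$ is a $3$-fold and, $f$ having degree $1$, its degree is unchanged, so $\deg V=\deg Y=12$. (These last two facts are, in fact, recorded verbatim in the discussion just before Proposition~\ref{prop: singularities of V}, so here they only need to be cited.)

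For normality I would invoke Serre's criterion, as was done for $Y$. The ring $\Run/(f)$ is Cohen--Macaulay, hence satisfies $S_2$. By Proposition~\ref{prop: singularities of V} the locus $\Sing V$ is finite, so $\codim_V \Sing V = 3 \geq 2$; equivalently $V$ is regular in codimension $1$, i.e.\ $R_1$ holds. By $R_1+S_2$ (\emph{cf}.~\cite[Theorem~18.15]{Ei}), $\Run/(f)$ is a normal ring and $V$ a normal scheme. A normal scheme is a disjoint union of integral ones, so it remains only to check connectedness: since $\Run$ is a domain and $f$ a nonzerodivisor of positive degree with $\dim \Run/(f)\geq 2$, a connectedness theorem of Grothendieck type shows that $V$ is connected. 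Being normal and connected, $V$ is reduced and irreducible.

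Finally, for the canonical class I would track the grading shift under the quotient by a regular element, as in the computation of the canonical module of $\Run$ (\emph{cf}.~\cite{NP2}). By Proposition~\ref{prop: Y reduced and irreducible normal...}, $\Run$ is Gorenstein with canonical module $\Run(-2)$. Passing to the quotient of a graded Cohen--Macaulay ring by a homogeneous nonzerodivisor of degree $e$ raises the canonical module by $e$, that is $\omega_{\Run/(f)}\cong (\omega_{\Run}/f\,\omega_{\Run})(e)$; for $\omega_{\Run}=\Run(-2)$ and $e=1$ this yields $\omega_{\Run/(f)}=(\Run/(f))(-1)$, so that $K_V=\OO_V(-1)$. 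Equivalently, this is the adjunction $K_V=(K_Y+\OO_Y(1))|_V=\OO_Y(-2+1)|_V$ for the degree-$1$ hypersurface section $V\subset Y$.

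The only genuinely delicate point I anticipate is this last bookkeeping of the grading shift, since adjunction on a singular subscheme of a weighted projective space must be phrased at the level of canonical modules of the affine cones in order to land on the correct twist $\OO_V(-1)$; once the sign and degree of $f$ are handled correctly, the remaining claims follow routinely from the established properties of $Y$ and from the finiteness of $\Sing V$.
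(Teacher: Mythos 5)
Your proposal is correct and follows essentially the same route as the paper, whose proof is simply ``similar to that of Proposition~\ref{prop: Y reduced and irreducible normal...}'': quotient of the Cohen--Macaulay Gorenstein ring $\Run$ by the regular degree-$1$ element $x_{00}+x_{01}$, Serre's criterion $R_1+S_2$ (via \cite[Theorem~18.15]{Ei} and the finiteness of $\Sing V$ from Proposition~\ref{prop: singularities of V}) to get a normal domain, and the canonical-module shift $\omega_{\Run/(f)}\cong(\omega_{\Run}/f\omega_{\Run})(1)$ giving $K_V=\OO_V(-1)$. Your grading bookkeeping, including the twist $(\Run/(f))(-1)$, is accurate, so nothing is missing.
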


\begin{proof}
The proof is similar to that of Proposition~\ref{prop: Y reduced and irreducible normal...}.
\end{proof}

The surface $T$, on which we will set up a group action of $G\cong (\ZZ/2)^3$
will be a suitable hypersurface section of $V$ of degree $2$, and therefore a canonical surface. In particular
the group action is induced by action of $G$ on the ambient weighted projective space.
What we do next is to set an action of the larger group $(\ZZ/2)^6$ on the ambient space, which leaves $V$ invariant.
Following that, we single out a subgroup $G\cong (\ZZ/2)^3$ of $(\ZZ/2)^6$ inducing on $H^0(\OO_V(1))$ the
regular representation of $G$ minus the trivial rank $1$ representation.
Finally, we choose the surface \mbox{$T\in \ls{\OO_V(2)}$} in such a way that $G$ leaves it
invariant and that the induced representation of $G$ on $H^0(\OO_T(2))=H^0(K_T)$ is the sum of $4$
copies of the regular representation.
\medskip

\noindent
Let $\alpha_1$, $\alpha_2$, $\alpha_3$, $\beta_1$, $\beta_2$, $\beta_3$ be generators of $(\ZZ/2)^6$. Let them
act on the space $\Span{x_{ij}}$ in the following way: $\alpha_i$ exchanges $x_{00}$ with $x_{01}$ and exchanges
$x_{i0}$ with $x_{i1}$, fixing all the remaining variables; $\beta_i$ takes $x_{i0}$ to $-x_{i0}$ and $x_{i1}$ to $-x_{i1}$,
fixing all the remaining variables.
Since the actions of two generators commute, we obtain an action of $(\ZZ/2)^6$ on $\PP^7$.
Clearly, by inspection of (\ref{eq: quadrics}), $X$ is invariant under this action.
The identification of the variables $y_{abcd}$ with the rational functions on $X$ of (\ref{eq: definition of y_abcd})
induces an extension of this action to $\PP(1^8,2^8)$
so that $Y$, and $V$ as well, become invariant. Since
\begin{equation}\label{eq: extending the action}
\varphi_{abcd} = \frac{x_{1b'}x_{2c'}x_{3d'}}{x_{0a}} \stackrel{\alpha_1}{\longrightarrow}\frac{x_{1b}x_{2c'}x_{3d'}}{x_{0a'}} = \varphi_{a'b'cd},
\end{equation}
\emph{etc}., it suffices to set $\alpha_1(y_{abcbd})=y_{a'b'cd}$,
$\alpha_2(y_{abcd})=y_{a'bc'd}$, $\alpha_3(y_{abcd})=y_{a'bcd'}$ and
$\beta_i(y_{abcd})=-y_{abcd}$, for all $1\leq i \leq 3$.
We summarize this in Table~\ref{table: The large group action}.

\begin{table}[h]\caption{The $(\ZZ/2)^6$-action.}\label{table: The large group action}
\newcolumntype{x}[1]{>{\hspace{0pt}}p{#1}}
\centering \renewcommand{\arraystretch}{1.5}
\begin{tabular}[h]{x{.85cm}x{2cm}x{2cm}x{2.4cm}}
\noalign{\hrule height 1pt} %
$\alpha_1$ & $x_{00}\leftrightarrow x_{01}$ & $x_{10}\leftrightarrow x_{11}$ & $y_{abcd}\leftrightarrow y_{a'b'cd}$ \tabularnewline\noalign{\hrule height 0.25pt} 
$\alpha_2$ & $x_{00}\leftrightarrow x_{01}$ & $x_{20}\leftrightarrow x_{21}$ & $y_{abcd}\leftrightarrow y_{a'bc'd}$  \tabularnewline\noalign{\hrule height 0.25pt}
$\alpha_3$ & $x_{00}\leftrightarrow x_{01}$ & $x_{30}\leftrightarrow x_{31}$ & $y_{abcd}\leftrightarrow y_{a'bcd'}$ \tabularnewline\noalign{\hrule height 0.25pt}
$\beta_i$ & $x_{i0}\rightarrow -x_{i0}$ & $x_{i1}\rightarrow -x_{i1}$ & $y_{abcd}\rightarrow -y_{abcd}$ \tabularnewline\noalign{\hrule height 1pt}
\end{tabular}
\end{table}

\medskip

\noindent
Consider the subgroup $G\subset (\ZZ/2)^6$ given by
\begin{equation}\label{eq: definition of small G}
G=\Span{\alpha_1\beta_2,\alpha_2\beta_3,\alpha_3\beta_1}\cong (\ZZ/2)^3.
\end{equation}
It is easy to see that the representation of $G$ on $H^0(\OO_V(1))$ is the regular representation minus the trivial rank $1$ representation; indeed the representation of $G$ on $\Span{x_{00},x_{01},\dots,x_{31}}$ is the regular representation and $x_{00}+x_{01}$ generates the invariant eigenspace. Likewise, given a character $\epsilon \in \Hom((\ZZ/2)^3,\CC)$, it is not hard to see that the polynomial
\begin{equation}\label{eq: eigenvectors of y space}
\sum_{abcd\in \LL} \epsilon(b,c,d) y_{abcd}
\end{equation}
is an eigenvector for the action of $G$ on the space $\Span{y_{abcd}\mid (a,b,c,d)\in\LL}$ and
that the $8$ polynomials obtained in this way generate distinct eigenspaces of the action.
The expression for the trivial eigenvector, obtained from (\ref{eq: eigenvectors of y space})
using the character given by $\epsilon(b,c,d)=(-1)^{b+c+d}$, for all $(b,c,d)\in (\ZZ/2)^3$, is given by:
\begin{equation}\label{eq: the invariant eigenvector of y space}
{\sum_{abcd\in \LL} (-1)^{b+c+d}y_{abcd}}={\sum_{abcd\in \LL} (-1)^a y_{abcd}}.
\end{equation}
\smallskip

\noindent
The representation theory of $G$ on the cohomology of $T$ dictates the eigenspace of $H^0(\OO_V(2))$ from which to take the equation of $T\in \ls{\OO_V(2)}$. According to (\ref{eq: L194}) and the discussion above, the equation for $T$ belongs to the invariant eigenspace of $H^0(\OO_V(2))$.
Consider the following invariant quadratic forms in the $x_{ia}$ variables:
\begin{equation}\label{eq: invariant quadratics}
s_i = \frac{x_{i0}^2+x_{i1}^2}2 \quad \text{and}\quad t_i=x_{i0}x_{i1}\quad
\text{for}\quad i=0,1,2,3.
\end{equation}
using $x_{00}+x_{01}=0$ and (\ref{eq: quadrics}) we obtain
\begin{equation}\label{eq: invariant quadratics companion}
t_i=t_0=-s_0
\end{equation}
on $V$ and $W$. Hence $s_0,s_1,s_2,s_3$ form a basis for the invariant subspace of the second symmetric power of
$H^0(\OO_V(1))$. From this and (\ref{eq: the invariant eigenvector of y space}) we see that a
general element of the invariant eigenspace of $H^0(\OO_V(2))$ is given by:
\begin{equation}\label{eq: quad sec}
q = l+\nu_4 {\sum_{abcd\in \LL} (-1)^a y_{abcd}},\quad \text{where}\quad l=\nu_0s_0 +\nu_1s_1 + \nu_2 s_2 + \nu_3s_3
\end{equation}
and $\nu_0$, $\nu_1$, $\nu_2$, $\nu_3$, $\nu_4$ are general complex parameters.
Let $\mathcal{N}\cong \PP^4$ be the linear system of surfaces given by
\begin{equation}\label{eq: definition of N}
\mathcal{N}=\set{T=V\cap(q=0)\mid (\nu_0,\nu_1,\nu_2,\nu_3,\nu_4)\in
 \PP^4}.
\end{equation}
Then $G$ acts on every $T\in\mathcal{N}$, and we can take the quotient $S=T/G$.

\begin{theorem}\label{thm: free action on $T$}
A general element $T\in\mathcal{N}$ is smooth surface of general type with ample canonical divisor and with $p_g(T)=7$,
$q(T)=0$ and $K_T^2=24$. Furthermore the canonical map of $T$ is a birational morphism
onto a complete intersection of three quadrics and a cubic in $\PP^6$. For a general surface $T\in\mathcal{N}$, the action of $G$ is free
and therefore $S:=T/G$ is a surface of general type with ample canonical divisor and with $p_g(S)=0$ and $K_S^2=3$.
\end{theorem}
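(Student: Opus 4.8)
The plan is to derive all the numerical invariants from the geometry of the $\QQ$-Fano $3$-fold $V$, exploiting that $T\in\ls{\OO_V(2)}=\ls{-2K_V}$ and that the homogeneous coordinate ring of $V$ is Cohen--Macaulay.

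First I would establish smoothness. The crucial point is that $\mathcal{N}$ is base-point free on $V$: the base locus is the common zero locus on $V$ of $s_0,s_1,s_2,s_3$ and $\sum_{abcd\in\LL}(-1)^ay_{abcd}$, and if $s_0=\dots=s_3=0$ then \eqref{eq: invariant quadratics companion} forces every $t_i=0$, so by \eqref{eq: quadrics} and $x_{00}+x_{01}=0$ all weight-$1$ coordinates vanish; the locus $\{x_{ia}=0\}\cap V$ is exactly the eight points $\yy_{abcd}$ (the quartic relations \eqref{eq: quartic relations} allow at most one nonzero $y$ there), at each of which $\sum(-1)^ay_{abcd}\neq 0$. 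Evaluating a general $q$ at the fourteen singular points of $V$ from Proposition~\ref{prop: singularities of V} gives $q(\yy_{abcd})=\pm\nu_4$ and $q(\xx_{ia})=\nu_i/2$, nonzero for general $\nu$, so a general $T$ misses $\Sing V$ and lies in the smooth locus of $\PP(1^8,2^8)$. Bertini then yields a smooth $T$, while $0\to\OO_V(-2)\to\OO_V\to\OO_T\to0$ and the vanishing of intermediate cohomology of $\OO_V$ give $h^0(\OO_T)=1$, so $T$ is irreducible.

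For the invariants, adjunction gives $K_T=(K_V+T)|_T=\OO_V(1)|_T=\OO_T(1)$, which is ample because $-K_V=\OO_V(1)$ is; hence $T$ is of general type with ample canonical class, and $K_T^2=\OO_V(1)^2\cdot\OO_V(2)=2(-K_V)^3=24$. Twisting the structure sequences, $0\to\OO_V(-1)\to\OO_V(1)\to\OO_T(1)\to0$ with $h^0(\OO_V(-1))=h^1(\OO_V(-1))=0$ gives $p_g(T)=h^0(\OO_V(1))=7$, and $0\to\OO_V(-2)\to\OO_V\to\OO_T\to0$ with $h^1(\OO_V)=h^2(\OO_V(-2))=0$ gives $q(T)=0$; every vanishing used is an instance of the vanishing of intermediate cohomology for the arithmetically Cohen--Macaulay $V$. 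Since $H^0(K_T)=H^0(\OO_T(1))$ is spanned by the weight-$1$ coordinates modulo $x_{00}+x_{01}$, the canonical map is the restriction to $T$ of $\pi\colon\PP(1^8,2^8)\dashrightarrow\PP^6$; as $\pi$ is undefined only at the $\yy_{abcd}$ (which $T$ avoids) it is a morphism on $T$, and since $\pi|_V$ is birational and an isomorphism off $\bigcup\H_{abcd}$ (Remark~\ref{rmk: unprojection map and iso between open}) while $T\not\subset\bigcup\H_{abcd}$, it is birational onto its image. To identify the image I would push $q=0$ forward: using $x_{0a}=(-1)^ax_{00}$ on $V$ and \eqref{eq: definition of y_abcd} one finds $\sum_{abcd\in\LL}(-1)^a\varphi_{abcd}=x_{00}^{-1}(x_{10}+x_{11})(x_{20}+x_{21})(x_{30}+x_{31})$, so clearing $x_{00}$ turns $q=0$ into the cubic $x_{00}\,l+\nu_4(x_{10}+x_{11})(x_{20}+x_{21})(x_{30}+x_{31})=0$, and the image is $W$ cut by this cubic, a complete intersection of three quadrics and a cubic in $\PP^6$.

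The hard part will be the freeness of the $G$-action. For each of the seven involutions $g\in G\setminus\{1\}$ I would compute $\Fix(g)$ as the union of its two eigenspaces in $\PP(1^8,2^8)$ (accounting for the weighted scaling on the weight-$2$ coordinates), intersect with $V$, and show a general $T$ contains no point of it. Since a dimension count makes the expected dimension of $\Fix(g)\cap V$ negative, the explicit equations of $V$ should cut $\Fix(g)\cap V$ down to finitely many points, each either lying in $\Sing V$ (already avoided by $T$) or avoided by a general $q$; the five free parameters $\nu_i$ are exactly what make this possible, and confirming that these finitely many candidate fixed points are genuinely avoidable is the main technical obstacle, because the statement would fail the instant some $\Fix(g)\cap V$ acquired a curve through the smooth locus. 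Granting freeness, $T\to S=T/G$ is \'etale of degree $8$, so $S$ is smooth of general type with $K_S$ ample (it pulls back to the ample $K_T$), $K_S^2=K_T^2/8=3$, $\chi(\OO_S)=\chi(\OO_T)/8=1$, and $p_g(S)=\dim H^0(K_T)^G=0$ by \eqref{eq: L194} (the trivial character does not occur in $H^0(K_T)$), whence $q(S)=0$.
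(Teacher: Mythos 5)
Your treatment of smoothness, the invariants, and the canonical map is sound and follows the same skeleton as the paper, with a few genuinely different (and perfectly good) choices: you deduce irreducibility from $h^0(\OO_T)=1$ via the structure sequence, where the paper instead invokes \cite[Theorem~18.15]{Ei} for the Gorenstein quotient ring; you derive $p_g(T)=7$ and $q(T)=0$ from the twisted structure sequences and the vanishing of intermediate cohomology of the arithmetically Cohen--Macaulay $V$, where the paper reads them off from projective Gorensteinness; and your explicit evaluations $q(\yy_{abcd})=\pm\nu_4$, $q(\xx_{ia})=\nu_i/2$ sharpen the paper's ``for a general choice'' when showing $T$ avoids $\Sing V$ (Proposition~\ref{prop: singularities of V}). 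Your sign $\nu_4(x_{10}+x_{11})(x_{20}+x_{21})(x_{30}+x_{31})=-x_{00}l$ is in fact the correct one; the paper's (\ref{eq: cubic}) appears to carry a harmless sign slip. One small omission in this part: containment of the canonical image in the complete intersection is not yet equality; you need the one-line degree count the paper makes (the image has degree $K_T^2=24$ by birationality, and the complete intersection is pure $2$-dimensional of degree $24$ because the forms are a regular sequence).

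The genuine gap is the freeness of the $G$-action, which you explicitly defer as ``the main technical obstacle''---but this is the heart of the theorem, and the deferred step is precisely where all the work lies. The paper reduces by symmetry to three elements, $\alpha_1\beta_2$, $\alpha_1\alpha_2\beta_2\beta_3$ and $\alpha_1\alpha_2\alpha_3\beta_1\beta_2\beta_3$, and for each eliminates the $(+,+)$ and $(-,+)$ fixed components by hand using (\ref{eq: quadrics}), (\ref{eq: cubic relations}) and (\ref{eq: quartic relations}); note also that the fixed locus in $\PP(1^8,2^8)$ has a third piece, with all $x_{ia}=0$ and the $y$'s in the $(-1)$-eigenspace, which is harmless only because $Y\cap(x_{ia}=0,\ \forall i,a)$ is exactly the eight points $\yy_{abcd}$, which $T$ avoids. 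Your heuristic ``negative expected dimension, so the equations should cut $\Fix(g)\cap V$ down to finitely many points'' is exactly what must be \emph{proved}, not assumed: nearby involutions in $H\setminus G$, such as $\beta_1\beta_2$ and $\alpha_2\beta_1\beta_2\beta_3$, do fix curves on $T$---this is how the branch divisors of the bicanonical map arise in Proposition~\ref{prop: the branch locus of the bicanonical map}---so finiteness for the seven elements of $G$ depends delicately on the sign patterns and genuinely requires the eliminations. Moreover, for $g=\alpha_1\alpha_2\alpha_3\beta_1\beta_2\beta_3$ the $(-,+)$ locus really does meet $V$ in honest smooth points outside the base locus: there $x_{j0}^2=-x_{00}^2$ and $-y_{abcd}^2=x_{00}^4$, so $q$ restricts to $(\nu_0-\nu_1-\nu_2-\nu_3+\delta\nu_4)\,x_{00}^2$ with $\delta$ ranging over a finite set of purely imaginary values, and avoidance holds only because the general $\nu_i$ satisfy the nonvanishing recorded in Remark~\ref{rmk: needed for Burniat pencil}; base-point-freeness alone does not see this. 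Once finiteness of each $\Fix(g)\cap V$ is established, your Bertini-type conclusion (a general member of a base-point-free system misses a fixed finite set) does finish the argument---but until the three case analyses are actually carried out, freeness, and with it every assertion about $S=T/G$, remains unproven.
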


\begin{proof}
The base locus of $\mathcal{N}$ is contained in the locus given by $(s_0=s_1=s_2=s_3=0)$.
Using (\ref{eq: invariant quadratics}) and (\ref{eq: invariant quadratics companion}),
we get $x_{i0}x_{i1}=0$ for all $i=0,1,2,3$; and since $2s_i=x_{i0}^2+x_{i1}^2$ we deduce $x_{i0}=x_{i1}=0$
for all $i=0,1,2,3$. Therefore
\[
(s_0=s_1=s_2=s_3=0) \cap V=\set{\yy_{abcd}\mid (a,b,c,d)\in \LL},
\]
which, for general $\nu_0,\nu_1,\nu_2,\nu_3$, does not intersect $T$.
By Bertini's Theorem, $\Sing(T)$ is contained in the union of the base
locus of $\mathcal{N}$ and $\Sing(V)$. For a general choice of $\nu_0,\nu_1,\nu_2,\nu_3,\nu_4$,
the surface $T$ does not meet $\Sing(V)$, (\emph{cf}.~Proposition~\ref{prop: singularities of V}), and as we showed,
$\mathcal{N}$ is base point free. Hence $T$ is nonsingular.
Since the coordinate ring of $T$ is the quotient of $\Run$ by a regular sequence,
it is a Gorenstein graded ring and, in particular, Cohen-Macaulay.
By \cite[Theorem~18.15]{Ei} the coordinate ring of $T$ is a domain and, accordingly, $T$ is reduced and
irreducible. By adjunction, $K_T=\OO_T(1)$ which is ample, and the projectively Gorensteinness of $T$ yields
$q=\dim H^1(K_T)=0$ and $p_g(T)=7$. Finally $K^2_T=\deg(T)=2\deg(V)=24$.

\medskip
\noindent
The canonical map $\varphi_{K_T}$ of $T$ equals $\pi_{|T}$, the map given by the sections
$x_{00}=x_{01}$, $x_{10}$, $x_{11}$, $x_{20}$, $x_{21}$, $x_{30}$, $x_{31}$,
\emph{cf}.~Notation~\ref{not: pi and y}.
Since the locus of common zeros of these sections is contained in the locus
$(s_0=s_1=s_2=s_3=0)$ we deduce that $\varphi_{K_T}$ is a morphism.
Moreover since $K_T$ is ample, $\varphi_{K_T}$ is finite.
Since $\pi_{|V}$ is birational, and $T\in \mathcal{N}$ is a general element
of a movable linear system, $\varphi_{K_T}$ is also birational.
Then the canonical image $\varphi_{K_T}(T)$ is a nondegenerate surface of degree $K_T^2=24$
in the hyperplane  $\PP^6:=(x_{00}+x_{01}=0) \subset \PP^7$,
contained in the locus defined by (\ref{eq: quadrics}). By elimination, we find
a new cubic hypersurface through  $\varphi_{K_T}(T)$.
From $q=0\iff \nu_4{\sum_{abcd\in \LL} (-1)^a y_{abcd}} = -l$,
substituting $y_{abcd}$ with $\frac{x_{1b'}x_{2c'}x_{3d'}}{x_{0a}}$ and using $x_{01}=-x_{00}$
we get
\[
\textstyle  \nu_4\sum_{abcd\in \LL}(-1)^a \frac{x_{1b'}x_{2c'}x_{3d'}}{x_{0a}}= l
\iff \nu_4\sum_{bcd\in\set{0,1}^3} \frac{x_{1b'}x_{2c'}x_{3d'}}{x_{00}} = l,
\]
which yields the irreducible cubic equation:
\begin{equation}\label{eq: cubic}
\nu_4(x_{10}+x_{11})(x_{20}+x_{21})(x_{30}+x_{31})=x_{00}l.
\end{equation}
Therefore $\varphi_{K_T}(T)$ is a surface of degree $24$ contained in the intersection of
the hyperplane $(x_{00}+x_{01}=0)$, the quadrics (\ref{eq: quadrics}) and the
cubic defined by (\ref{eq: cubic}).
Since these polynomials form a regular sequence, we deduce that $\varphi_{K_T}(T)$
\mbox{coincides} with the complete intersection of $3$ quadrics and $1$ cubic that, choosing
$x_{00}$, $x_{10}$, $x_{11}, \dots,x_{30},x_{31}$ as basis for $H^0(K_T)$, are obtained
substituting $x_{01}$ for $-x_{00}$ in (\ref{eq: quadrics}) and (\ref{eq: cubic}).

\medskip
\noindent
Let us now show that the action of $G$ on $T$ is free. By symmetry it is enough to check that
the $3$ elements $\alpha_1\beta_2$, $\alpha_1\alpha_2\beta_2\beta_3$ and $\alpha_1\alpha_2\alpha_3\beta_1\beta_2\beta_3$
act on $T$ without fixed points. In the weighted projective space $\PP(1^8,2^8)$ the fixed locus of an
involution splits into three spaces; the $(+,+)$ part (\emph{i.e.}, positive on the $x$ variables and
positive on the $y$ variables), the $(-,+)$ part and the $(0,-)$ part (\emph{i.e.}, negative on the $y$
variables with all the $x$ variables $0$); since the last space cuts out the empty set on $T$, we will
repeatedly ignore it. Denote these spaces by $\Fix_{(+,+)}$ and $\Fix_{(-,+)}$.
Then, referring to Table~\ref{table: The large group action},
we see that $\Fix_{(+,+)}(\alpha_1\beta_2)$ is equal to:
\[
(x_{00}-x_{01}=x_{10}-x_{11}=x_{20}=x_{21}= y_{abcd}+y_{a'b'cd}=0,\forall_{abcd \in \LL}).
\]
From (\ref{eq: quadrics}) we get $x_{00}x_{01}=x_{10}x_{11}=0$ and hence $x_{00}=x_{01}=x_{10}=x_{11}=0$.
Thus all coordinates $x_{ia}$ vanish except for, possibly, $x_{30}$ or $x_{31}$.
From the quartic relation $y_{abcd}y_{a'b'cd}=x_{2c'}^2x_{3d'}^2=0$,  \emph{cf}.~(\ref{eq: quartic relations}),
and $y_{abcd}+y_{a'b'cd}=0$ we deduce that $y_{abcd}=0$ for all $(a,b,c,d)\in\LL$. Using $q=0$ we obtain $x_{30}=x_{31}=0$.
Hence $T$ does not meet $\Fix_{(+,+)}(\alpha_1\beta_2)$.

\medskip
\noindent
Next we consider the loci $\Fix_{(-,+)}(\alpha_1\beta_2)$, $\Fix_{(+,+)}(\alpha_1\alpha_2\beta_2\beta_3)$, $\Fix_{(-,+)}(\alpha_1\alpha_2\beta_2\beta_3)$ and $\Fix_{(+,+)}(\alpha_1\alpha_2\alpha_3\beta_1\beta_2\beta_3)$ which are given by:
\[
\renewcommand{\arraystretch}{1.3}
\begin{array}{c}
(x_{00}+x_{01}=x_{10}+x_{11}=x_{30}=x_{31}=y_{abcd}+y_{a'b'cd}=0,\forall_{abcd \in \LL}), \\
(x_{10}-x_{11}=x_{20}+x_{21}=x_{30}=x_{31}=y_{abcd}-y_{ab'c'd}=0,\forall_{abcd \in \LL}), \\
(x_{00}=x_{01}=x_{10}+x_{11}=x_{20}-x_{21}=y_{abcd}-y_{ab'c'd}=0,\forall_{abcd \in \LL}), \\
(x_{00}-x_{01}=x_{10}+x_{11}=x_{20}+x_{21}=x_{30}+x_{31}=y_{abcd}+y_{a'b'c'd'}=0,\forall_{abcd \in \LL}),
\end{array}
\]
respectively. Arguing as before (remembering, for the last locus, that $x_{00}+x_{01}=0$ holds)
we see that none of them meets $T$.

\medskip
\noindent
Finally $\Fix_{(-,+)}(\alpha_1\alpha_2\alpha_3\beta_1\beta_2\beta_3)$ is given by:
\[
(x_{00}+x_{01}=x_{10}-x_{11}=x_{20}-x_{21}=x_{30}-x_{31}= y_{abcd}+y_{a'b'c'd'}=0,\forall_{abcd \in \LL}).
\]
 Using (\ref{eq: quadrics}) we get $x_{j0}^2=- x_{00}^2$. Hence $s_j=-s_0$, for all $j=1,2,3$.
From the quartic equations (\ref{eq: quartic relations}) we get
$-y^2_{abcd}=y_{abcd}y_{a'b'c'd'}=x_{20}x_{21}x_{30}x_{31} = x_{00}^4$.
Taking square roots of this equation, substituting in $q=0$ and using the generality of
$\nu_0$, $\nu_1$, $\nu_2$, $\nu_3$, $\nu_4$ we deduce that $x_{00}=0$; and hence
$x_{01}=x_{j0}=x_{j1}=0$ for all $j=1,2,3$ and $y_{abcd}=0$ for all $(a,b,c,d) \in \LL$. Therefore
$T \cap \Fix_{(-,+)}(\alpha_1\alpha_2\alpha_3\beta_1\beta_2\beta_3)=\emptyset$.
\smallskip

\noindent
Since the action of $G$ on $T$ is free, $S=T/G$ is a nonsingular surface of general type with $p_g(S)=0$ and $K^2_S=3$.
Since $K_T$ is ample, we deduce that $K_S$ is ample.
\end{proof}

\begin{remark}
Theorem~\ref{thm: free action on $T$} shows that for every $T\in\N$ such that
\begin{itemize}
\item $T$ has at most canonical singularities,
\item the action of $G$ on $T$ is free,
\end{itemize}
the quotient $S=T/G$ is the canonical model of a surface of general type
with $p_g(S)=0$ and $K^2=3$: this provides a $4$-dimensional family of these surfaces.
\end{remark}

\begin{remark}\label{rmk: needed for Burniat pencil}
By analysis of the proof of Theorem~\ref{thm: free action on $T$}, we see that if, for a given $T\in \mathcal{N}$,
the action of $G$ has any fixed points on $T$ then either $\nu_1\nu_2\nu_3=0$ or there exists $\delta$ in a
finite set of (integer) multiples of $i$ such that $\nu_0-\nu_1-\nu_2-\nu_3+\delta\nu_4 = 0$.
We shall use this observation later on.
\end{remark}

\section{A double cover}\label{sec: double cover}

Consider the Fano $4$-fold $\PP^{1}\times \PP^1 \times \PP^1 \times \PP^1$ with coordinates
$(t_{00},t_{01})$, $(t_{10},t_{11})$, $(t_{20},t_{21})$,
$(t_{30},t_{31})$, and let $\sigma \colon \PP^1\times \PP^1\times \PP^1 \times \PP^1 \rightarrow \PP(1^8,2^8)$ be the map
given by:
\[
\renewcommand{\arraystretch}{1.3}
\begin{array}{c}
\begin{array}{ll}
\sigma^\sharp(x_{0a})=t_{0a'}t_{1a}t_{2a}t_{3a},&\sigma^\sharp(x_{1a})=t_{0a}t_{1a'}t_{2a}t_{3a},  \\
\sigma^\sharp(x_{2a})=t_{0a}t_{1a}t_{2a'}t_{3a}, & \sigma^\sharp(x_{3a})=t_{0a}t_{1a}t_{2a}t_{3a'},
\end{array}\\
\sigma^\sharp(y_{abcd})=t_{0a'}^2t_{1b'}^2t_{2c'}^2t_{3d'}^2,  \text{ if $a=b=c=d$ and }
\sigma^\sharp(y_{abcd})=t_{0a}^2t_{1b}^2t_{2c}^2t_{3d}^2 \text{ otherwise.}
\end{array}
\]
It is straightforward to check that $\sigma(\PP^1\times \PP^1 \times \PP^1 \times \PP^1)= Y\subset \PP(1^8,2^8)$.

\begin{prop}\label{prop: double cover}
The map $\sigma \colon \PP^1\times \PP^1\times \PP^1\times \PP^1 \rightarrow Y$ is finite of degree $2$ branched exactly at the set
$\set{\xx_{ia},\yy_{abcd} : 0\leq i\leq 3, (a,b,c,d)\in \LL}$.
\end{prop}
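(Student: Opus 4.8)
The plan is to verify the degree-$2$ and branch-locus statements directly from the explicit formulas for $\sigma^\sharp$, exploiting the symmetry that $\sigma^\sharp$ is invariant under a single involution of $(\PP^1)^4$. First I would identify the deck involution. Observe that all the formulas for $\sigma^\sharp(x_{ia})$ and $\sigma^\sharp(y_{abcd})$ are invariant under the involution $\tau$ that acts on $\PP^1\times\PP^1\times\PP^1\times\PP^1$ by $(t_{i0},t_{i1})\mapsto(-t_{i0},-t_{i1})$ simultaneously for all $i$ --- equivalently, the antipodal/diagonal involution $((t_{i0}:t_{i1}))_i\mapsto((t_{i0}:-t_{i1}))_i$ or its variant chosen so that every monomial appearing (which has total degree $4$ in the linear case and involves only squares in the quadratic case) is sent to itself or its negative consistently. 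The key point is that $\sigma$ factors through $(\PP^1)^4/\tau$, and since $\sigma$ is surjective onto $Y$ (as already noted in the excerpt) with both source and image irreducible of dimension $4$, the induced map $(\PP^1)^4/\tau\to Y$ is generically finite; one computes its degree to be $1$ by exhibiting a point of $Y$ with exactly one $\tau$-orbit in its fiber, whence $\sigma$ has degree $2$.

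Second I would establish finiteness. Since $\sigma$ is a morphism (all defining monomials are globally defined on $(\PP^1)^4$, and they have no common zero because in each $\PP^1$ factor not both $t_{i0},t_{i1}$ vanish) between projective varieties with $Y$ irreducible of the same dimension, $\sigma$ is finite as soon as it has finite fibers; the factorization through the finite quotient map $(\PP^1)^4\to(\PP^1)^4/\tau$ together with the birational (hence finite, being proper and quasi-finite) map $(\PP^1)^4/\tau\to Y$ gives this. Thus $\sigma$ is a finite degree-$2$ morphism, and its branch locus is precisely the image of $\Fix(\tau)$.

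Third, and this is where the real computation lies, I would compute $\sigma(\Fix(\tau))$ and match it to the asserted set $\set{\xx_{ia},\yy_{abcd}}$. The fixed locus of $\tau$ on $(\PP^1)^4$ consists of the points where, in each factor, $(t_{i0}:t_{i1})$ is one of the two fixed points of the involution on $\PP^1$, say $t_{i1}=0$ or $t_{i0}=0$. Evaluating $\sigma^\sharp$ on such a configuration: if in factor $i$ we set the ``odd'' coordinate to zero, then most monomials vanish and one reads off that the image is a coordinate point $\xx_{ia}$, while the configurations where the pattern $(a,b,c,d)$ is constant feed into the $y$-formulas and produce the points $\yy_{abcd}$. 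The bookkeeping is finite but delicate: one must check each of the $2^4$ sign-patterns of fixed points in the four factors, determine which single coordinate of $\PP(1^8,2^8)$ survives, and verify that exactly the $\xx_{ia}$ (for $0\le i\le 3$) and the $\yy_{abcd}$ (for $(a,b,c,d)\in\LL$) arise, with no spurious branch points. The main obstacle will be this case analysis over the fixed patterns, compounded by the need to confirm that $\sigma$ is genuinely unramified (étale of degree $2$) away from these points --- which follows because off $\Fix(\tau)$ the quotient map is étale and the birational map $(\PP^1)^4/\tau\to Y$ is an isomorphism over the locus $Y\setminus\set{\xx_{ia},\yy_{abcd}}$, the latter being exactly the smooth/good locus identified via the isomorphism \eqref{eq: isomorphism between Xminus and Yminus} and Proposition~\ref{prop: singularities of V}. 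I would therefore structure the proof as: (i) finiteness and degree via the $\tau$-factorization, (ii) identification of $\Fix(\tau)$ and its image, and (iii) the complementary étale statement, concluding that the branch locus is exactly the claimed $14$-point set.
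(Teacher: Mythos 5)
Your overall architecture coincides with the paper's: the paper also exhibits the involution $s(t_{ia})=(-1)^a t_{ia}$ with $\sigma\circ s=\sigma$, notes that its fixed locus consists of exactly the $16$ coordinate points of $\PP^1\times\PP^1\times\PP^1\times\PP^1$, and obtains the branch set as the image of that locus. The genuine gap is in how you certify degree $2$ and finiteness. Your criterion --- ``one computes its degree to be $1$ by exhibiting a point of $Y$ with exactly one $\tau$-orbit in its fiber'' --- is invalid: for a finite surjective map onto a normal variety the fiber cardinality is only bounded above by the degree, and it drops at special points (think of $x\mapsto x^2$ over $0$). Here this is not a hypothetical worry: each of the $16$ points $\xx_{ia},\yy_{abcd}$ has a \emph{singleton} fiber under $\sigma$, i.e.\ exactly one $\tau$-orbit, even though $\deg\sigma=2$; an unlucky choice of point would thus ``prove'' that $\sigma$ is birational. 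You need a general point, or a point where the induced map is unramified, and verifying that a general fiber is a single $s$-orbit is precisely the computation you postpone. The paper does it concretely, chart by chart: on $U_{00}=(x_{00}\neq 0)\subset Y$ the preimage is $\CC^4$ with coordinates $u_0=t_{00}/t_{01}$, $u_1=t_{11}/t_{10}$, $u_2=t_{21}/t_{20}$, $u_3=t_{31}/t_{30}$, and the generators $x_{ia}/x_{00}$, $y_{abcd}/x_{00}^2$ of $\CC[U_{00}]$ pull back to even monomials generating the ideal $(u_0,u_1,u_2,u_3)^2$, so the image subalgebra is exactly the ring of invariants of $u_i\mapsto -u_i$ and each chart map is finite of degree $2$. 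That single computation delivers quasi-finiteness, finiteness (module-finiteness of the ring extension) and the degree at once --- the three things your sketch leaves hanging. Relatedly, ``birational (hence finite, being proper and quasi-finite)'' is circular: birational does not imply quasi-finite (blow-ups), and quasi-finiteness of $(\PP^1)^4/\tau\to Y$ must be argued; if you want to avoid the chart computation, note instead that $\sigma^*\OO(1)=\OO(1,1,1,1)$ is ample, so $\sigma$ contracts no curve.

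Two smaller corrections. Your first formula for the deck involution, $(t_{i0},t_{i1})\mapsto(-t_{i0},-t_{i1})$ in every factor, is the \emph{identity} on $(\PP^1)^4$; only the second formula, $t_{i1}\mapsto -t_{i1}$ in each factor (the paper's $s$), is right, and the sign bookkeeping you hedge about is settled by the weights: every $\sigma^\sharp(x_{ia})$ changes sign under $s$ while every $\sigma^\sharp(y_{abcd})$ is invariant, which is the action of the scalar $\lambda=-1$ on $\PP(1^8,2^8)$ (weights $1$ and $2$), whence $\sigma\circ s=\sigma$. Also, it is not only the constant sign-patterns that feed the $y$-coordinates: every even pattern $(a,b,c,d)\in\LL$ yields $\yy_{abcd}$ (with the two constant patterns swapped, e.g.\ $(0,0,0,0)\mapsto\yy_{1111}$), while the $8$ odd patterns --- exactly the ``all entries equal except one'' patterns --- yield the $\xx_{ia}$. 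Finally, once degree $2$ and finiteness are in place, your last step can be streamlined: $Y$ is normal by Proposition~\ref{prop: Y reduced and irreducible normal...}, so the finite birational map $(\PP^1)^4/s\to Y$ is an isomorphism, and the branch locus of $\sigma$ is exactly $\sigma(\Fix(s))$; no separate \'etale verification away from the fixed locus is needed.
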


\begin{proof}
Let $U_{ia}\subset Y$ be the open subset of $Y$ given by $x_{ia}\not = 0$.
First note that $\sigma^{-1}(\yy_{abcd})$ consists of a point, more precisely one of the coordinate points of
$\PP^1\times \PP^1\times \PP^1\times \PP^1$.
Moreover, the family $\set{U_{ia}}$, with $0\leq i\leq 3$ and $a\in \set{0,1}$ is an open affine cover of
$Y \setminus \{\yy_{abcd}\}$.
Consider the restriction $\sigma_{|} \colon \sigma^{-1}(U_{00}) \rt U_{00}$. The open set $\sigma^{-1}(U_{00})$ is
simply $\CC^4$ with coordinates
\[
\frac{t_{00}}{t_{01}},\frac{t_{11}}{t_{10}},\frac{t_{21}}{t_{20}},\frac{t_{31}}{t_{30}}.
\]
The coordinate ring of $U_{00}$, which we denote by $\CC[U_{00}]$, is generated by the regular functions:
\[
\frac{x_{ia}}{x_{00}},\quad \frac{y_{abcd}}{x_{00}^2},\quad  \text{with $0\leq i\leq 3$ and $(a,b,c,d)\in \LL$}.
\]
Computing the image by $\sigma_|^\sharp$ of each of the generators of $\CC[U_{00}]$, we get the generators of the ideal
$(\frac{t_{00}}{t_{01}},\frac{t_{11}}{t_{10}},\frac{t_{21}}{t_{20}},\frac{t_{31}}{t_{30}})^2$. Hence
$\sigma_| \colon \sigma^{-1}(U_{00}) \rt U_{00}$ is finite of degree 2. The same computation on each $U_{ia}$
yields the same result, showing that $\sigma$  is a double cover.
Additionally, the involution $s \in \operatorname{Aut}\pare{\PP^1\times\PP^1 \times \PP^1 \times \PP^1}$ given by
\begin{equation}\label{eq: def of the involution s}
s(t_{ia}) = (-1)^a t_{ia}, \quad \text{for $0\leq i\leq 3$ and $a\in \set{0,1}$}
\end{equation}
satisfies $\sigma \circ s = \sigma$. Note that $s$ has exactly $16$ fixed points, the coordinate
points of $\PP^1\times \PP^1 \times \PP^1\times \PP^1$. In particular, $\sigma$ branches exactly
at their images, \emph{i.e}.~the points in the set $\set{\xx_{ia},\yy_{abcd} : 0\leq i\leq 3, (a,b,c,d)\in \LL}$.
\end{proof}

\begin{remark}
We can deduce from Proposition~\ref{prop: double cover}
that $\Sing Y$ is the set of $16$ points
$\set{\xx_{ia},\yy_{abcd} : 0\leq i\leq 3, (a,b,c,d)\in \LL}$, which are quotient singularities
of type $\frac12(1,1,1,1)$. This agrees with Proposition~\ref{prop: singularities of V}.
\end{remark}

\begin{remark}
The restriction of $\sigma$ to the Fano $3$-fold
\begin{equation}
Z_1=(t_{01}t_{10}t_{20}t_{30}+t_{00}t_{11}t_{21}t_{31}=0) \subset \PP^1\times \PP^1\times \PP^1 \times \PP^1
\end{equation}
is a double cover of $V$, branched on the $14$ singularities of $V$. The $3$-fold $Z_1$ is a (special) member of
$\ls{\OO(1,1,1,1)}^-$, the linear system of  effective divisors on
$\PP^1\times \PP^1\times \PP^1 \times \PP^1$ of degree $(1,1,1,1)$ anti-invariant respect to the involution $s$.
A general member of $\ls{\OO(1,1,1,1)}^-$ is the canonical double cover
of an Enriques--Fano $3$-fold with only terminal singularities. These $3$-folds were classified by Bayle and Sano \cite{bay, sano}.
The image of a general member of $\ls{\OO(1,1,1,1)}^-$ under $\sigma$ falls in case $10$ of Sano's list.
Indeed the whole construction in this section has been inspired by that case.
\end{remark}

Recall that $(\ZZ/2)^6$ acts on $Y$ as given in Table~\ref{table: The large group action}.

\begin{table}[h]\caption{Automorphisms of $\PP^1\times \PP^1\times \PP^1 \times \PP^1$. (For the last $4$,
since the action is diagonal we list only the eigenvalues. Here $\epsilon$ is a square-root of $-1$.)}
\label{table: automorphisms of P1xP1xP1xP1}
\newcolumntype{x}[1]{>{\raggedleft\hspace{0pt}}p{#1}}
\centering \renewcommand{\arraystretch}{1.4}
\begin{tabular}[h]{x{.75cm}x{.75cm}x{.75cm}x{.75cm}x{.75cm}x{.75cm}x{.75cm}x{.75cm}x{.75cm}x{.01cm}}
\noalign{\hrule height 1pt} %
           & $t_{00}$ & $t_{01}$ & $t_{10}$ & $t_{11}$ & $t_{20}$ & $t_{21}$ & $t_{30}$ & $t_{31}$ & \tabularnewline\noalign{\hrule height 0.25pt} 
$\tilde{\alpha}_1$ &  $t_{10}$ & $t_{11}$ & $t_{00}$ & $t_{01}$ & $t_{31}$ & $t_{30}$ & $t_{21}$ & $t_{20}$ & \tabularnewline\noalign{\hrule height 0.25pt} 
$\tilde{\alpha}_2$ & $t_{20}$ & $t_{21}$ & $t_{31}$ & $t_{30}$ & $t_{00}$ & $t_{01}$ & $t_{11}$ & $t_{10}$ & \tabularnewline\noalign{\hrule height 0.25pt} 
$\tilde{\alpha}_3$ & $t_{30}$ & $t_{31}$ & $t_{21}$ & $t_{20}$ & $t_{11}$ & $t_{10}$ & $t_{00}$ & $t_{01}$ & \tabularnewline\noalign{\hrule height 0.25pt} 
$\tilde{\beta}_1$  & -$\epsilon$ & $1$ & $1$ & $-\epsilon$ & $1$ & $\epsilon$ & $1$ & $\epsilon$ & \tabularnewline\noalign{\hrule height 0.25pt}  
$\tilde{\beta}_2$  & -$\epsilon$ & $1$ & $1$ & $\epsilon$ & $1$ & $-\epsilon$ & $1$ & $\epsilon$ & \tabularnewline\noalign{\hrule height 0.25pt} 
$\tilde{\beta}_3$  & -$\epsilon$ & $1$ & $1$ & $\epsilon$ & $1$ & $\epsilon$ & $1$ &  $-\epsilon$ &  \tabularnewline\noalign{\hrule height 0.25pt} 
$s$ & $1$ & $-1$ & $1$ & $-1$ & $1$ & $-1$ & $1$ & $-1$  \tabularnewline\noalign{\hrule height 1pt} 
\end{tabular}
\end{table}
In Table~\ref{table: automorphisms of P1xP1xP1xP1}, we distinguish a set of automorphisms of
$\PP^1\times \PP^1\times \PP^1\times \PP^1$, one of which, $s$, has already been defined in (\ref{eq: def of the involution s})
and the remaining ones are meant to lift the actions of $\alpha_1,\alpha_2,\alpha_3,\beta_1,\beta_2,\beta_3$.
One can check by direct computation that
$\tilde{\alpha}_1,\tilde{\alpha}_2,\tilde{\alpha}_3,\tilde{\beta}_1,\tilde{\beta}_2,\tilde{\beta}_3$
lift the action of $\alpha_1,\alpha_2,\alpha_3,\beta_1,\beta_2,\beta_3$, i.e., that
$\sigma\circ \tilde{\alpha}_i= \alpha_1\circ \sigma$ and $\sigma\circ \tilde{\beta}_i= \beta_1\circ \sigma$, for $i=1,2,3$.
On the other hand, there are a number of small checks that are straightforward. It is clear that $s$ commutes with every other automorphism listed
in Table~\ref{table: automorphisms of P1xP1xP1xP1}; it is also clear that $\tilde{\alpha}_1,\tilde{\alpha}_2,\tilde{\alpha}_3$ are automorphisms of order $2$ commuting with
each other, that $\tilde{\beta}_1,\tilde{\beta}_2,\tilde{\beta}_3$ commute with each other and that $\tilde{\beta}^2_1=\tilde{\beta}^2_2=\tilde{\beta}^2_3=s$.
Finally, a less straightforward (but still elementary) computation shows that $\tilde{\alpha}_i\tilde{\beta}_j = s^{\delta_{ij}}\tilde{\beta}_j\tilde{\alpha}_i$, where $\delta_{ij}$ is Kronecker's delta.
These identities are useful in the proof of the next proposition, where we characterize the group $\tilde{G}$ generated by the automorphisms
that lift the generators of $G=\Span{\alpha_1\beta_2,\alpha_2\beta_3,\alpha_3\beta_1}\simeq (\ZZ/2)^3$.

\begin{lemma}\label{lemma: lifting G}
$\tilde{G}:=\langle \tilde{\alpha}_1\tilde{\beta}_2, \tilde{\alpha}_2\tilde{\beta}_3, \tilde{\alpha}_3\tilde{\beta}_1 \rangle$
is isomorphic to $\ZZ/2 \times Q_8$, where $Q_8$ is the classical quaternion group.
\end{lemma}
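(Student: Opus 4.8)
The plan is to work purely with the commutation data assembled just before the statement: $s$ is central of order $2$, the $\tilde\alpha_i$ are commuting involutions, the $\tilde\beta_i$ commute with $\tilde\beta_i^2=s$, and $\tilde\alpha_i\tilde\beta_j=s^{\delta_{ij}}\tilde\beta_j\tilde\alpha_i$. Write $g_1=\tilde\alpha_1\tilde\beta_2$, $g_2=\tilde\alpha_2\tilde\beta_3$, $g_3=\tilde\alpha_3\tilde\beta_1$ for the three generators. First I would extract the full multiplication table of the $g_i$. In each $g_i$ the two indices differ, so its $\tilde\alpha$ and $\tilde\beta$ factors commute and $g_i^2=\tilde\alpha_i^2\tilde\beta_\bullet^2=s$; thus each $g_i$ has order $4$ and they all square to the same central involution $s$. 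Rearranging the four letters of $g_ig_j$ and applying $\tilde\alpha_i\tilde\beta_j=s^{\delta_{ij}}\tilde\beta_j\tilde\alpha_i$ exactly once yields $g_ig_j=s\,g_jg_i$ for $i\neq j$. These are the only relations needed.

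From here $N:=\langle g_1,g_2\rangle$ is visibly a copy of $Q_8$: one has $g_1^2=g_2^2=s$, $(g_1g_2)^2=s$, and $g_1g_2g_1^{-1}=sg_2=g_2^{-1}$, so sending $g_1,g_2,g_1g_2,s$ to $i,j,k,-1$ identifies $N$ with the quaternion group (it is non-abelian, which forces order exactly $8$). To pin down the central $\ZZ/2$ factor I would set $w=g_1g_2g_3$. Using only $g_ig_j=sg_jg_i$ and $g_i^2=s$ one checks that $w$ commutes with each $g_i$, hence is central, and that $w^2=1$. Since $g_3=g_2^{-1}g_1^{-1}w\in\langle g_1,g_2,w\rangle$, this gives $\tilde G=N\cdot\langle w\rangle$, with $N\cong Q_8$ normal and $\langle w\rangle$ central of order dividing $2$.

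It remains to show the product is direct and nondegenerate, i.e. that $w\notin N$, equivalently $|\tilde G|=16$ rather than $8$. This is the one step that is not formal: the relations alone only give $|\tilde G|\le 16$, since every element has the form $s^{\varepsilon}g_1^{a}g_2^{b}g_3^{c}$ with $\varepsilon,a,b,c\in\{0,1\}$, and they do not by themselves exclude a collapse $g_3\in N$. To rule this out I would invoke the covering $\sigma$: the equivariances $\sigma\tilde\alpha_i=\alpha_i\sigma$, $\sigma\tilde\beta_i=\beta_i\sigma$ and $\sigma s=\sigma$ imply that $\tilde g\mapsto g$ (where $\sigma\tilde g=g\sigma$) is a well-defined homomorphism $\rho\colon\tilde G\to G$ with $\rho(g_1)=\alpha_1\beta_2$, $\rho(g_2)=\alpha_2\beta_3$, $\rho(g_3)=\alpha_3\beta_1$ and $\rho(s)=\mathrm{id}$. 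As $G\cong(\ZZ/2)^3$ and these three images are exactly its independent generators (see \eqref{eq: definition of small G}), $\rho$ maps onto a group of order $8$ whose kernel contains the nontrivial element $s$; together with $|\tilde G|\le16$ this forces $|\tilde G|=16$ and $\ker\rho=\langle s\rangle$.

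Finally I would assemble the conclusion by counting: since $\tilde G=N\cdot\langle w\rangle$ with $|\tilde G|=16$, we get $|N\cap\langle w\rangle|=|N|\,|\langle w\rangle|/|\tilde G|=1$. Thus $N$ and $\langle w\rangle$ are normal subgroups with trivial intersection generating $\tilde G$, whence $\tilde G\cong N\times\langle w\rangle\cong Q_8\times\ZZ/2$. The main obstacle is precisely this counting step: all the group-theoretic structure drops out of the stated commutation identities, but establishing that the order is $16$ (equivalently, that $g_3$ is genuinely independent of $g_1,g_2$) requires the external input of the faithful action on $Y$, packaged in the homomorphism $\rho$.
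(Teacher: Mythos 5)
Your proof is correct, and it takes a genuinely different route from the paper's, although both arguments run on the same two inputs: the commutation identities recorded just before the lemma, and an order count coming from the double cover $\sigma$. The paper first pins down $|\tilde{G}|=16$ exactly as you anticipate, but phrased more directly: since $\deg\sigma=2$, every element of $\tilde{G}$ lifts an element of $G$ and the kernel of the lift homomorphism lies in the deck group $\{1,s\}$, so $|\tilde{G}|$ is $|G|$ or $2|G|$ according as $s\notin\tilde{G}$ or $s\in\tilde{G}$, and $(\tilde{\alpha}_1\tilde{\beta}_2)^2=s$ settles it; your $\rho$ with $\ker\rho=\langle s\rangle$ is this same geometric input, repackaged (and your remark that $\rho$ needs the faithful action on $Y$ to be well defined is a point the paper leaves implicit). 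The paper then proceeds in the opposite direction from you: it defines an explicit homomorphism $\mu\colon(\ZZ/2)\times Q_8\to\tilde{G}$ on the standard presentation $\langle -1,i,j,k\rangle$, sending $(1,-1)\mapsto s$, $(1,i)\mapsto g_2g_3$, $(1,j)\mapsto g_3g_1$, $(1,k)\mapsto g_1g_2$ and $(-1,1)\mapsto g_1g_2g_3$ (your $w$), checks the relations, proves surjectivity by computing $\mu(-1,-i)=g_1$, $\mu(-1,-j)=g_2$, $\mu(-1,-k)=g_3$, and concludes by equality of orders. You instead build the internal direct product decomposition: the relations $g_i^2=s$ and $g_ig_j=sg_jg_i$, the internal quaternion subgroup $N=\langle g_1,g_2\rangle$ (correctly forced to be all of $Q_8$ because proper quotients of $Q_8$ are abelian while $s\neq 1$), the central involution $w=g_1g_2g_3$, and the counting $\tilde{G}=N\times\langle w\rangle$. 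Your insistence that the order-$16$ step is not formal is exactly right: the abstract relations alone are satisfied in $Q_8$ itself by $g_1=i$, $g_2=j$, $g_3=-k$, where $w=1$, so the collapse you guard against is a real possibility excluded only by the covering. As for what each approach buys: the paper's explicit $\mu$ is a mechanical verification that writes the isomorphism in coordinates, while your decomposition exposes the structure ($N$ normal of index $2$, $w$ central) and makes transparent precisely where the geometry enters; a minor curiosity is that the two proofs select different internal copies of $Q_8$ --- the paper's, generated by the pairwise products $g_2g_3$, $g_3g_1$, $g_1g_2$, does not coincide with your $\langle g_1,g_2\rangle$ (it contains $g_2g_3\notin N$) --- whereas the central $\ZZ/2$ factor is generated by the same element $w=g_1g_2g_3$ in both.
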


\begin{proof}
Since $\deg \sigma=2$, $|\tilde{G}|$ equals either $2|G|$, if $s \in \tilde{G}$, or $|G|$, if  $s \not\in \tilde{G}$.
Since $(\tilde{\alpha_1}\tilde{\beta_2})^2=s$, we get $|\tilde{G}|= 2\ls{G}=16$. Consider
the standard presentation of $Q_8$ given by
\[
\langle -1,i,j,k\mid (-1)^2=1,i^2=j^2=k^2=ijk=-1\rangle
\]
and, for clarity, let us use multiplicative notation for $\ZZ/2=\set{1,-1}$.
Set:
\[\renewcommand{\arraystretch}{1.3}
\begin{array}{c}
\mu(1,-1)=s,\quad
\mu(-1,1)=\tilde{\alpha}_1\tilde{\beta}_2 \tilde{\alpha}_2\tilde{\beta}_3 \tilde{\alpha}_3\tilde{\beta}_1, \\
\mu(1,i)=\tilde{\alpha}_2\tilde{\beta}_3 \tilde{\alpha}_3\tilde{\beta}_1,\quad
\mu(1,j)=\tilde{\alpha}_3\tilde{\beta}_1 \tilde{\alpha}_1\tilde{\beta}_2,\quad
\mu(1,k)=\tilde{\alpha}_1\tilde{\beta}_2 \tilde{\alpha}_2\tilde{\beta}_3.
\end{array}
\]
Using the identities stated earlier, one can check easily that these definitions respect all the relations of
$(\ZZ/2)\times Q_8$ and therefore determine a group homomorphism $\mu \colon (\ZZ/2)\times Q_8 \rt \tilde{G}$.
Since:
\[\renewcommand{\arraystretch}{1.3}
\begin{array}{c}
\mu(-1,-i)=\mu(-1,1)\mu(1,i)^{-1} = \tilde{\alpha}_1\tilde{\beta}_2 \tilde{\alpha}_2\tilde{\beta}_3 \tilde{\alpha}_3\tilde{\beta}_1
\tilde{\beta}_1^{-1}\tilde{\alpha}_3\tilde{\beta}_3^{-1}\tilde{\alpha}_2 = \tilde{\alpha}_1\tilde{\beta}_2, \\
\mu(-1,-j)= (\tilde{\alpha}_1\tilde{\beta}_2) \tilde{\alpha}_2\tilde{\beta}_3 \tilde{\alpha}_3\tilde{\beta}_1 (
\tilde{\beta}_2^{-1}\tilde{\alpha}_1) \tilde{\beta}_1^{-1}\tilde{\alpha}_3 =
\tilde{\alpha}_2\tilde{\beta}_3 \tilde{\alpha}_3\tilde{\beta}_1 \tilde{\beta}^{-1}_1\tilde{\alpha}_3 =
\tilde{\alpha}_2\tilde{\beta}_3, \\
\mu(-1,-k)= \tilde{\alpha}_1\tilde{\beta}_2 (\tilde{\alpha}_2\tilde{\beta}_3) \tilde{\alpha}_3\tilde{\beta}_1
(\tilde{\beta}_3^{-1}\tilde{\alpha}_2)\tilde{\beta}_2^{-1}\tilde{\alpha}_1 = s
\tilde{\alpha}_1\tilde{\beta}_2 \tilde{\alpha}_3\tilde{\beta}_1\tilde{\beta}^{-1}_2\tilde{\alpha}_1 =
\tilde{\alpha}_3\tilde{\beta}_1,
\end{array}
\]
we deduce that $\mu$ is surjective, which, as $|\tilde{G}|=|(\ZZ/2)\times Q_8|$, implies that $\mu$ is an isomorphism.
\end{proof}

We can now give a good description of the family of surfaces $T/G$, for general $T$ in the linear system ${\mathcal N}$.

\begin{theorem}\label{thm: universal cover of $T$}
Let $T\in\mathcal{N}$ be a surface with at most canonical singularities for which the action of $G$ on it is free. Then
$\pi_1(T/G) \cong \ZZ/2 \times Q_8$ and the universal cover of $T$ is a complete intersection of the two hypersurfaces
in $\PP^1\times \PP^1\times \PP^1\times \PP^1$,  $Z_1$ and $Z_2$, of multi-degrees $(1,1,1,1)$ and $(2,2,2,2)$, respectively,
given by:
\[
Z_1=(t_{01}t_{10}t_{20}t_{30}+t_{00}t_{11}t_{21}t_{31}=0)\text{ and }
\]
\[
Z_2=\sum_{i=0}^3 \nu_i \left(t_{i0}^2\prod_{j\neq i}t_{j1}^2+t_{i1}^2\prod_{j\neq i}t_{j0}^2\right) - 2\nu_4 {\sum_{abcd\in \LL} (-1)^{\frac{b+c+d-a}2} t_{0a}^2t_{1b}^2t_{2c}^2t_{3d}^2} =0.
\]
\end{theorem}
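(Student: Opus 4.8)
The plan is to build on the double cover structure already established. By Proposition~\ref{prop: double cover}, the map $\sigma\colon\PP^1\times\PP^1\times\PP^1\times\PP^1\to Y$ is finite of degree $2$, and by the remark following it, $\sigma$ restricts over $Z_1$ to the double cover of $V$ branched on the $14$ singular points. The key identification is that $\sigma^{-1}(T)$, for $T=V\cap(q=0)$, is exactly the complete intersection $Z_1\cap Z_2$. First I would verify this by pulling back the equation $q=0$ under $\sigma^\sharp$: using the formulas $\sigma^\sharp(s_i)=\tfrac12(t_{i0}^2t_{i0'}^2\prod_{j\ne i}t_{j0}^2t_{j1}^2+\cdots)$ coming from $\sigma^\sharp(x_{ia})$, and $\sigma^\sharp(y_{abcd})$, one obtains (after clearing the common factor forced by the relation defining $Z_1$, together with $\sigma^\sharp(x_{00}+x_{01})$) precisely the displayed equation $Z_2$. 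The sign $(-1)^{(b+c+d-a)/2}$ is exactly what $\sigma^\sharp$ produces from the eigenvector $(-1)^a y_{abcd}$ once one distinguishes the case $a=b=c=d$ (where $\sigma^\sharp(y_{abcd})$ involves the primed indices) from the generic case. This is the main computational step, and the main subtlety is bookkeeping the signs and the degree-$(1,1,1,1)$ factor that must be divided out using the equation of $Z_1$; I expect this to be the principal obstacle.

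Next I would establish that $\tilde T:=Z_1\cap Z_2$ is the universal cover of $T$. By Lemma~\ref{lemma: lifting G}, $\tilde G=\langle\tilde\alpha_1\tilde\beta_2,\tilde\alpha_2\tilde\beta_3,\tilde\alpha_3\tilde\beta_1\rangle\cong\ZZ/2\times Q_8$ acts on $\PP^1\times\PP^1\times\PP^1\times\PP^1$, and since these automorphisms lift the generators of $G$ and commute appropriately with $\sigma$, the group $\tilde G$ leaves both $Z_1$ and $Z_2$ invariant, hence acts on $\tilde T$. The central element $s=(\tilde\alpha_1\tilde\beta_2)^2$ generates the deck group of $\sigma$, so $\tilde T/\langle s\rangle\cong T$ and $\tilde T/\tilde G\cong T/G=S$. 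The essential point is that $\tilde G$ acts \emph{freely} on $\tilde T$. For the subgroup $\langle s\rangle$ this amounts to checking that $\tilde T$ avoids the $16$ coordinate points of $(\PP^1)^4$ (the fixed locus of $s$), which follows from our hypothesis that $T$ misses $\Sing V=\{\xx_{ia},\yy_{abcd}\}$, i.e. the branch points of $\sigma$. For the remaining elements, freeness of $\tilde G$ on $\tilde T$ is equivalent to freeness of $G$ on $T$: a fixed point of $g\in\tilde G$ on $\tilde T$ would map to a fixed point of its image in $G$ on $T$ unless $g\in\langle s\rangle$, and the case $g\in\langle s\rangle$ is handled as above. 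Thus the hypothesis that $G$ acts freely on $T$, combined with $T$ having at most canonical singularities, transfers to give a free $\tilde G$-action on the smooth $\tilde T$.

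Finally I would conclude the fundamental-group computation. Since $\tilde T$ is a smooth complete intersection in $(\PP^1)^4$ of divisors of multi-degrees $(1,1,1,1)$ and $(2,2,2,2)$, which are positive, by the Lefschetz hyperplane theorem (applied twice, or via the general complete-intersection Lefschetz theorem for products of projective spaces) $\tilde T$ is simply connected; its dimension is $2$, so connectivity and simple-connectivity hold. Therefore $\tilde T\to\tilde T/\tilde G=S$ is the universal covering and $\pi_1(S)\cong\tilde G\cong\ZZ/2\times Q_8$. Since $T\to S=T/G$ is an \'etale $\ZZ/2\times\ZZ/2\times\ZZ/2$-cover with $G$ acting freely, the induced map on fundamental groups identifies $\pi_1(T)$ with the kernel of $\pi_1(S)\twoheadrightarrow G$, and $\tilde T\to T$ is correspondingly the universal cover of $T$ via the further quotient by $G$. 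The only care needed here is to confirm simple-connectivity of $\tilde T$ when $T$ has canonical (rather than smooth) singularities; but then $\tilde T$ is its minimal desingularization's birational model and one argues that the canonical singularities of $T$ lift to canonical (hence simply-connected-neighborhood) singularities of $\tilde T$, so the fundamental group is unchanged under resolution.
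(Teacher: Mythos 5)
There is one genuine gap: you take as a \emph{hypothesis} that $T$ misses $\Sing V=\set{\xx_{ia},\yy_{abcd}}$, i.e.\ the branch points of $\sigma$. The theorem assumes only that $T\in\mathcal{N}$ has at most canonical singularities and that $G$ acts freely on it; the avoidance of the $16$ special points is a claim that must be proved, and it is exactly where the canonical-singularities hypothesis does its work. The argument you are missing runs as follows: the two points $\xx_{00},\xx_{01}$ do not lie on $V$ at all (they violate $x_{00}+x_{01}=0$), and the remaining $14$ points are singular points of $V$ whose Zariski tangent spaces have dimension $5$ or $8$ by Proposition~\ref{prop: singularities of V}; since $T$ is a Cartier divisor in $V$, if $T$ passed through such a point its Zariski tangent space there would have dimension at least $4$, whereas a canonical surface singularity is a rational double point, hence a hypersurface singularity with Zariski tangent space of dimension at most $3$. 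Without this step, $\sigma_{|\tilde{T}}\colon \tilde{T}\to T$ could be branched (Proposition~\ref{prop: double cover}), the composite $\tilde{T}\to T\to S$ would not be \'etale, and your whole covering-space argument --- including the freeness of $s$ on $\tilde{T}$, which you correctly reduce to this very avoidance --- is unsupported. Note that the freeness of $G$ on $T$ cannot substitute for it: a fixed point of $s$ projects to a point of $T$ where the element of $G$ being lifted is the identity, so the free-action hypothesis says nothing there.

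Apart from this, your route is essentially the paper's: pull back the two equations cutting out $T$ in $Y$ (namely $x_{00}+x_{01}=0$ and $q=0$) to identify $\sigma^{-1}(T)=Z_1\cap Z_2$, quote Lefschetz for $\pi_1(\tilde{T})=0$, and identify the deck group of $\tilde{T}\to S$ with $\tilde{G}\cong\ZZ/2\times Q_8$ via Lemma~\ref{lemma: lifting G}; your direct verification that $\tilde{G}$ acts freely is logically redundant once \'etaleness is in place, since the deck group of a simply connected covering acts freely automatically. Two smaller corrections. First, your parenthetical ``simply-connected-neighborhood'' for canonical singularities is false: the link of an $A_n$-point is a lens space; what is true, and what suffices, is that the exceptional locus of the minimal resolution of a rational double point is a tree of smooth rational curves, hence simply connected, so resolution does not change $\pi_1$. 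Second, ``Lefschetz applied twice'' in its smooth form is not available: $Z_1$ is itself singular, with ordinary double points at the six odd coordinate points lying over the $\xx_{ia}$ with $i\geq 1$ (e.g.\ at $t_{01}=t_{11}=t_{20}=t_{31}=0$ the local equation of $Z_1$ is $u_0u_2+u_1u_3=0$), and $\tilde{T}$ may have rational double points --- contrary to your assertion that it is smooth, which also clashes with the hypotheses you are allowing. One therefore needs the Lefschetz theorem for possibly singular ample divisors (Goresky--MacPherson/Hamm), i.e.\ your alternative phrasing, which is also what the paper's one-line citation implicitly invokes.
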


\begin{proof}
We note that $T$ does not contain any of the $16$ points in the set
\[
\set{\xx_{ia},\yy_{abcd}\mid 0\leq i\leq 3, (a,b,c,d)\in \LL}.
\]
Indeed $T$ is a Cartier divisor in $V$, which contains $14$ of these points that,
by Proposition \ref{prop: singularities of V}, are singular points of $V$ with Zariski tangent
space of dimension $5$ or $8$. In particular, if $T$ contains one of these points, the Zariski tangent
space of $T$ at this point has at least dimension $4$, whereas every canonical singularity of a surface
has Zariski tangent space of dimension $3$. Since $T$ is the complete intersection of two divisors ($V$ and a quadric section, given by $x_{00}+x_{01}=0$ and
(\ref{eq: quad sec}), respectively) in $Y$, the surface $\tilde{T}:=\sigma^{-1}(T)$ is the complete intersection
of their pull-back to $\PP^1\times \PP^1 \times \PP^1 \times \PP^1$, which one easily sees are the hypersurfaces
$Z_1$ and $Z_2$, respectively, of the statement of this theorem. By the Leftschetz hyperplane section theorem, $\pi_1(\tilde{T})=0$.
Now as the composition $\tilde{T} \stackrel{\sigma_|}{\rightarrow} T \rightarrow S$ is \'{e}tale, we conclude that
$\tilde{T}$ is the universal cover of $S$. In particular, $\pi_1(S)$ is isomorphic to the group of automorphisms of the cover,
which coincides with the group of automorphisms of $\tilde{T}$ lifting the action of $G$. This is $\tilde{G}$, which, by Lemma \ref{lemma: lifting G},
is isomorphic to $\ZZ/2 \times Q_8$.
\end{proof}

We conclude this section by studying the locus of the moduli space of the surfaces of general type described by the surfaces $S$.

\begin{theorem}\label{thm: moduli}
Let $\mathcal{U}$ be the dense open set of $\mathcal{N}\cong\PP^4$ consisting of the surfaces $T$ with at most canonical singularities
on which $G$ acts freely. Then, the map associating to each point of $\mathcal{U}$ the class of the surface $S/G$,
in the Gieseker moduli space of surfaces of general type with $\chi=1$ and $K^2=3$, is finite.
In particular, its image is $4$-dimensional and unirational.
\end{theorem}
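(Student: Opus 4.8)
The plan is to first exhibit the classifying morphism and then to prove it is quasi-finite, since finite fibres yield at once both conclusions of the last sentence. Over $\mathcal{U}$ the surfaces $S=T/G$ form a flat family of smooth minimal surfaces of general type with ample canonical class (Theorem~\ref{thm: free action on $T$}), each equal to its own canonical model; the universal property of Gieseker's coarse moduli space $\mathcal{M}$ of surfaces with $\chi=1$, $K^2=3$ therefore produces a morphism $\eta\colon\mathcal{U}\to\mathcal{M}$ sending $[\nu]$ to the class of $S$. As $\mathcal{U}$ is a dense open subset of $\PP^4$ it is irreducible, rational and $4$-dimensional; hence, once $\eta$ is shown to have finite fibres, its image has dimension $\dim\mathcal{U}=4$ and, being dominated by the rational variety $\mathcal{U}$, is unirational.

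To bound a fibre $\eta^{-1}([S])$ I would send each $[\nu]$ in it to the surface $T_{[\nu]}$ together with its $G$-action, regarded as a connected \'etale Galois cover of the \emph{fixed} surface $S$ with group $G\cong(\ZZ/2)^3$. Such covers are classified by the normal subgroups $N$ of $\pi_1(S)$ with $\pi_1(S)/N\cong(\ZZ/2)^3$; since $\pi_1(S)\cong\ZZ/2\times Q_8$ is a fixed finite group (Theorem~\ref{thm: universal cover of $T$}), there are only finitely many of them. Hence it suffices to show that, for one such cover $(T_0,\rho_0)$, only finitely many $[\nu]$ give a $G$-surface $(T_{[\nu]},\rho_{[\nu]})$ isomorphic to it.

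The key is that, although $p_g(S)=0$ prevents working with $S$ directly, the cover $T$ has $p_g=7$ and equals its own canonical model, embedded by $|K_T|$ into the fixed hyperplane $\PP^6=(x_{00}+x_{01}=0)$ as the complete intersection of the three quadrics (\ref{eq: quadrics}) and the cubic (\ref{eq: cubic}) (Theorem~\ref{thm: free action on $T$}). A $G$-equivariant isomorphism $T_{[\nu]}\cong T_{[\nu']}$ (equivariant up to an automorphism of $G$) is canonical, hence extends to a $g\in\mathrm{PGL}(\PP^6)$ which (i) preserves the net of quadrics — the degree-$2$ part of the ideal, common to every member of $\mathcal{N}$ — and (ii) normalises the $G$-action on $\PP^6$, so as to descend to the quotient. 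Writing $\Gamma_G$ for the group of such $g$, the set of $[\nu]$ in question is contained in a single $\Gamma_G$-orbit inside $\mathcal{N}\cong\PP^4$.

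The main obstacle is to prove that $\Gamma_G$ is finite, or at least acts on $\mathcal{N}$ with finite orbits. I expect this to be an explicit computation. The linear maps preserving the net $\langle x_{00}^2+x_{10}x_{11},\,x_{00}^2+x_{20}x_{21},\,x_{00}^2+x_{30}x_{31}\rangle$ form a positive-dimensional group, containing the tori $x_{i0}\mapsto\lambda x_{i0}$, $x_{i1}\mapsto\lambda^{-1}x_{i1}$; the point is that requirement (ii) kills this continuous part, since for instance such a torus conjugates the involution $\alpha_1\beta_2$ to a transformation lying outside $G$ unless $\lambda^2=1$. Carrying out the normaliser computation, one finds $\Gamma_G$ finite and concludes that every fibre of $\eta$ is a finite union of finite $\Gamma_G$-orbits, whence $\eta$ is quasi-finite. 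This already gives the $4$-dimensionality and the unirationality of the image; upgrading quasi-finiteness to honest finiteness of $\eta$ onto its image is then a routine properness argument for the bounded family of canonical models $S$.
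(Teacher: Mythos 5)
Your proposal is correct and shares the paper's skeleton in its two opening reductions: the paper likewise passes from $S_1\cong S_2$ to $T_1\cong T_2$ via the observation that the abelianization of $\ZZ/2\times Q_8$ is $(\ZZ/2)^3$ (so the connected $(\ZZ/2)^3$-cover is in fact \emph{unique}, not merely one of finitely many), and likewise works with the canonical images, complete intersections of the fixed net of quadrics and a cubic in $\PP^6=(x_{00}+x_{01}=0)$. Where you genuinely diverge is the finiteness mechanism. The paper takes an \emph{arbitrary}, non-equivariant isomorphism $T_1\cong T_2$, lifts it to an automorphism $\Phi$ of $\PP(1^8,2^8)$ acting on the canonical rings, and exploits the unprojection structure: $\tilde{\Phi}^{-1}$ must permute the eight planes $\Pi_{abcd}$ (the polar loci of the $y_{abcd}$), so after composing with an element of the finite group $\Lambda\cong(\ZZ/2)^6\rtimes\mathfrak{S}_3$ one may assume $\Phi^\sharp(x_{ia})=\lambda_{ia}x_{ia}$, and then $\nu_4\neq 0$ together with the cubic and quartic relations forces $\lambda_{ia}\in\{\pm 1\}$. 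You instead use the equivariance (up to $\Aut(G)$) furnished by covering-space theory, so your $g$ must normalize the image of $G$ in $\mathrm{PGL}_7$ besides preserving the net, and you bound the resulting group $\Gamma_G$ abstractly. That bound is the crux of your argument, and it does complete, in fact more cleanly than your single torus check suggests: the identity component of the algebraic group $\Gamma_G$ must centralize the finite image of $G$; since the $7$-dimensional representation is the sum of the seven distinct nontrivial characters of $(\ZZ/2)^3$, each with multiplicity one (and no nontrivial character twist of it is isomorphic to it, any such twist containing the trivial character), the projective centralizer is exactly the diagonal torus in the eigenbasis $x_{00}$, $x_{i0}\pm x_{i1}$; writing $x_{i0}x_{i1}=\frac14\left((x_{i0}+x_{i1})^2-(x_{i0}-x_{i1})^2\right)$, preservation of the net forces all eigenvalues to agree up to sign, so the identity component is trivial and $\Gamma_G$ is finite, bounding each fibre of $\eta$. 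The trade-off: the paper's computation needs no equivariance and never leaves the explicit coordinates of the unprojection format, while yours is more conceptual, uses less of the special geometry, and would survive cosmetic changes of the equations.

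Two small corrections. First, drop the closing promise to upgrade quasi-finiteness to finiteness of $\eta$ by ``a routine properness argument'': $\mathcal{U}$ is a non-proper open set and quasi-finite morphisms from it need not be finite; the paper, too, only proves that the fibres are finite, which is all that the dimension count and the unirationality of the image require. Second, over all of $\mathcal{U}$ the surfaces $T$ may have rational double points, so your family consists of canonical models rather than smooth surfaces; this does not affect the existence of the classifying morphism to the Gieseker moduli space, but the phrase ``flat family of smooth minimal surfaces'' should be adjusted accordingly.
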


\begin{proof}

\noindent
Let $S_1:=T_1/G$, $S_2:=T_2/G$ be surfaces with $T_1,T_2\in \mathcal{U}$. Assume that \mbox{$S_1 \cong S_2$}.
By Theorem~\ref{thm: universal cover of $T$},
$\pi_1(S_1)\cong \pi_1(S_2)\cong (\ZZ/2)\times Q_8$. Since the Abelianization of
$(\ZZ/2)\times Q_8$ is $(\ZZ/2)^3$,
each $S_i$ has exactly one $(\ZZ/2)^3$-cover up to isomorphism.
Therefore from $S_1 \cong S_2$, it follows that $T_1 \cong T_2$.
This isomorphism induces an isomorphism of the canonical rings of $T_1$ and $T_2$. Choose an
automorphism $\Phi$ of $\PP(1^8,2^8)$ that lifts the isomorphism between $\Proj R(T_1,K_{T_1})$ and $\Proj R(T_2,K_{T_2})$.
Note that $\Phi$ is not unique, as the image by $\Phi^\sharp$ of each ge\-ne\-ra\-tor of the underlying polynomial ring
is determined only modulo the ideal of $T_2$ and therefore, in particular, $\Phi^\sharp(x_{ia})$ is determined only up to
$x_{00}+x_{10}$. In what follows we show that $\Phi_{|(x_{00}+x_{01}=0)}$ belongs to a finite set.
\medskip

\noindent
The restriction of the isomorphism $\Phi^\sharp$ to the variables of degree $1$
yields an automorphism of $\PP^7$, which we denote by $\tilde{\Phi}$, mapping the canonical image of $T_1$ to
the canonical image of $T_2$. In particular $\tilde{\Phi}$ preserves the hyperplane $(x_{00}+x_{01}=0)$, which is the linear
span of both surfaces, and $W\subset \PP^7$, given by (\ref{eq: quadrics}), which is their quadric hull.
Recall that, as in Definition~\ref{def: the unprojection and projection maps}, for every $(a,b,c,d)\in \LL$, $H_{abcd}$ is the divisor of poles
of the rational function $\varphi^\sharp(y_{abcd})=\varphi_{abcd}$ on $W$.
Let us consider the $8$ planes given by
$\Pi_{abcd}:=H_{abcd} \cap (x_{00}+x_{01}=0) \subset W$.
Then $\tilde{\Phi}^{-1}(\Pi_{abcd})$ is a plane, it is contained in $W$,
and it is the intersection of $(x_{00}+x_{01}=0)$ with the divisor of the
poles of $\varphi^{\sharp}\Phi^{\sharp}(y_{abcd})$. Since $\Phi^{\sharp}(y_{abcd}) \in \operatorname{S}^2\Span{x_{ia}}\oplus \langle y_{abcd} \rangle$,
we deduce that $\tilde{\Phi}^{-1}$ permutes the $8$ planes $\Pi_{abcd}$.
The first consequence is that $\tilde{\Phi}^{-1}$ preserves the linear span of these $8$ planes,
$(x_{00}=x_{10}=0)$. Now, as we are only interested on $\Phi_{|(x_{00}+x_{01}=0)}$, we may modify
$\Phi$ so that $\Phi^\sharp(x_{0a})=\lambda x_{0a}$, for $a=0,1$ and for some $\lambda \in \CC^*$. Then, rescaling
$\Phi^\sharp$, and thus still without changing $\Phi_{|(x_{00}+x_{01}=0)}$, we may finally assume that $\Phi^\sharp(x_{0a})= x_{0a}$.
Another consequence of the fact that $\tilde{\Phi}^{-1}$ permutes the $8$ planes $\Pi_{abcd}$
is that there exists $\tau \in {\mathfrak S}_3$, $(a,b,c,d)\in \LL$, with $a=0$
and  $\lambda_1,\lambda_2,\lambda_3\in \CC^*$ such that $\Phi^\sharp(x_{10})=\lambda_1x_{\tau(1)b}$, $\Phi^\sharp(x_{20})=\lambda_2x_{\tau(2)c}$ and
$\Phi^\sharp(x_{30})=\lambda_3x_{\tau(3)b}$. Since $\Phi^\sharp$ must also preserve
(\ref{eq: quadrics}) we deduce that $\Phi^\sharp(x_{11})=\lambda_1^{-1}x_{\tau(1)b'}$, $\Phi^\sharp(x_{21})=\lambda_2^{-1}x_{\tau(2)c'}$ and
$\Phi^\sharp(x_{31})=\lambda_3^{-1}x_{\tau(3)b'}$.
\medskip

\noindent
Consider the action of ${\mathfrak S}_3$ on $\PP(1^8,2^8)$ given, for every $\tau \in {\mathfrak S}_3$ by,
\[
\tau^\sharp(x_{0a})=x_{0a}, \quad
\tau^\sharp (x_{ia}) = x_{\tau(i)a}, \quad \tau^\sharp(y_{a_0a_1a_2a_3})=y_{a_0a_{\tau^{-1}(1)}a_{\tau^{-1}(2)}a_{\tau^{-1}(3)}},
\]
for all $1\leq i \leq 3$ and $(a,b,c,d)\in \LL$.
We note that given $\tau \in \mathfrak{S}_3$, we have
$\tau \alpha_i \tau^{-1}=\alpha_{\tau(i)}$ and $\tau \beta_i \tau^{-1}=\beta_{\tau(i)}$, where, recall,
$\alpha_i$ and $\beta_i$ generate $(\ZZ/2)^6$ and act on $\PP(1^8,2^8)$ as given in Table~\ref{table: The large group action}.
These actions generate a finite group $\Lambda$ of automorphisms of $\PP(1^8,2^8)$ preserving $V$ and $Y$ which is a semidirect product,
$\Lambda \cong (\ZZ/2)^6 \rtimes {\mathfrak S}_3$. Accordingly, going back to $\Phi$, there exists $\Psi \in\Lambda$ and constants $\lambda_{ia}\in\CC^*$ such that
\[
(\Phi \circ \Psi)^{\sharp}(x_{ia})= (\Psi^{\sharp} \circ \Phi^{\sharp} )(x_{ia})=\lambda_{ia}x_{ia}
\]
with $\lambda_{00}= \lambda_{01}=1$ and $\lambda_{i1}=\lambda_{i0}^{-1}$. Notice that from the cubic relations (\ref{eq: cubic relations}) we get
\mbox{$(\Phi \circ \Psi)^{\sharp}(y_{abcd})=\lambda_{1b'}\lambda_{2c'}\lambda_{3d'}y_{abcd}$}.
Since for both $T_1,T_2\in \mathcal{U}\subset \mathcal{N}$ we must have $\nu_4 \neq 0$
(for otherwise $T_1$ or $T_2$ would be too singular), from the equation of the quadric section (\ref{eq: quad sec}), we deduce that
the products $\lambda_{1b'}\lambda_{2c'}\lambda_{3d'}$, for $(a,b,c,d)\in \LL$ are all equal. This can only happen if $\lambda_{i1}=\lambda_{i0}$ for $i\in \set{1,2,3}$. Hence $\lambda_{ia} \in \{\pm 1\}$, for $i=1,2,3$
and thus there are only finitely many possibilities for $\Phi_{|(x_{00}+x_{01})=0} \circ \Psi$. Since $\Psi$ belongs to the finite group $\Lambda$,
we deduce that there are, as well, only finitely many possibilities for $\Phi_{|(x_{00}+x_{01})=0}$.
\end{proof}

\section{The bicanonical map of $S$}\label{sec: bicanonical}
The main goal of this section is to compare the surfaces we have constructed with the other constructions existing in literature.
To reach this goal we study the bicanonical map of $S$, which is interesting in its own right. We show that, as in the Burniat case, the bicanonical map of $S$ is a bidouble cover of a cubic surface in $\PP^3$ with $3$ nodes.
This induces a birational description of these surfaces as bidouble covers of the plane. We compute the branch divisors, and then identify the Burniat surfaces.
Not surprisingly, the branch divisors corresponding to a general surface in our family (cf. Figure \ref{fig: twc})
correspond exactly to the one used in the recent paper \cite{bcburniat3} to define the {\it extended} tertiary Burniat surfaces.
\medskip

\noindent
Consider the action of $(\ZZ/2)^6$ on $V$ as given in Table~\ref{table: The large group action}. For a subgroup of $(\ZZ/2)^6$ to act on $T$ it must preserve the equation $q=0$. An element of $(\ZZ/2)^6$, written as $\alpha_1^{a_1}\alpha_2^{a_2}\alpha_3^{a_3}\beta_1^{b_1}\beta_2^{b_2}\beta_3^{b_3}$, sends $q=0$ to a scalar multiple of it if and only if the integer $a_1+a_2+a_3+b_1+b_2+b_3$ is even. Let $H$ be the subgroup of $(\ZZ/2)^6$ given by
\begin{equation}\label{eq: definition of large H}
H = \set{\alpha_1^{a_1}\alpha_2^{a_2}\alpha_3^{a_3}\beta_1^{b_1}\beta_2^{b_2}\beta_3^{b_3} \in (\ZZ/2)^6 \mid a_1+a_2+a_3+b_1+b_2+b_3
\text{ is even}}.
\end{equation}
The group $G$ defined in (\ref{eq: definition of small G}) is obviously a subgroup of $H$. Hence the quotient $\Gamma:=H/G\cong (\ZZ/2)^2$ acts on $S=T/G$. Denote by $\gamma \colon S \rt S/\Gamma$ the quotient morphism. In the next proposition we show that $\gamma$ is the bicanonical map of $S$.

\begin{prop}\label{prop: the bicanonical map of S}
Let $T\in\mathcal{N}$ be a surface with at most canonical singularities and such that the action of $G$ on it is fixed-point free. Consider $S=T/G$. Then, the
bicanonical map of $S$ is a bidouble cover of the cubic surface $S_3\subset \PP^3$ given by
$8\nu_4^2(s_1-s_0)(s_2-s_0)(s_3-s_0) - s_0 (\nu_0s_0+\nu_1s_1+\nu_2s_2+\nu_3s_3)^2=0$.
\end{prop}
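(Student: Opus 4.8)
The plan is to pin down the bicanonical system of $S$ explicitly, show that it factors through the quotient $\gamma\colon S\to S/\Gamma$, and then obtain the cubic by a one-line elimination. First I would identify $H^0(2K_S)$. Since the cover $\pi\colon T\to S$ is étale with group $G$ and $K_T=\OO_T(1)=\pi^*K_S$, one has $H^0(S,2K_S)=H^0(T,2K_T)^G$, and by \eqref{eq: L194} the invariant part is $H^0(2K_T)^G=\chi_{000}^{\oplus 4}$, of dimension $4$, in agreement with $h^0(2K_S)=\chi(\OO_S)+K_S^2=4$. I claim this space is spanned by the restrictions to $T$ of the four $G$-invariant quadrics $s_0,s_1,s_2,s_3$ of \eqref{eq: invariant quadratics}. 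Indeed, together with $\sum_{abcd\in\LL}(-1)^a y_{abcd}$ they span the invariant part of the degree-$2$ piece on $V$ (cf.~\eqref{eq: quad sec}); on $T$ the relation $q=0$ expresses that last form through the $s_i$, and the $s_i$ stay linearly independent because the ideal of $T$ in $V$ is generated by $q$, which involves the $y$-variables nontrivially ($\nu_4\neq0$) whereas the $s_i$ do not. As $(s_0=s_1=s_2=s_3=0)\cap V=\{\yy_{abcd}\}$ is disjoint from $T$ (see the proof of Theorem~\ref{thm: free action on $T$}), the bicanonical map is the morphism $[s_0:s_1:s_2:s_3]\colon S\to\PP^3$.

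Next I would observe that each $s_i=(x_{i0}^2+x_{i1}^2)/2$ is invariant under the whole $(\ZZ/2)^6$-action of Table~\ref{table: The large group action}: the $\alpha$'s permute $x_{i0},x_{i1}$ and the $\beta$'s negate them. In particular the $s_i$ are $H$-invariant, so $[s_0:s_1:s_2:s_3]$ is constant on $H$-orbits and the bicanonical map factors as $S=T/G\to T/H=S/\Gamma\to\PP^3$; that is, it factors through $\gamma$.

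To produce the cubic I would eliminate. On $V$ we have $x_{01}=-x_{00}$, hence $x_{00}^2=s_0$, and by \eqref{eq: invariant quadratics companion} $x_{i0}x_{i1}=-s_0$, so $(x_{i0}+x_{i1})^2=2s_i+2x_{i0}x_{i1}=2(s_i-s_0)$ for $i=1,2,3$. Squaring the cubic relation \eqref{eq: cubic}, namely $\nu_4(x_{10}+x_{11})(x_{20}+x_{21})(x_{30}+x_{31})=x_{00}l$ with $l=\nu_0s_0+\nu_1s_1+\nu_2s_2+\nu_3s_3$, and substituting yields
\[
8\nu_4^2(s_1-s_0)(s_2-s_0)(s_3-s_0)=s_0\,(\nu_0s_0+\nu_1s_1+\nu_2s_2+\nu_3s_3)^2,
\]
which is exactly the defining equation of $S_3$. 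Hence the bicanonical image is contained in $S_3$.

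Finally I would close with a degree count. Since $K_S$ is ample the bicanonical morphism is finite, and its image is a surface because $(2K_S)^2=4K_S^2=12\neq0$. For general $\nu$ the cubic above is irreducible, so the image is all of $S_3$, a normal (nodal) cubic surface of degree $3$; therefore the bicanonical map has degree $(2K_S)^2/\deg S_3=12/3=4=|\Gamma|$ onto $S_3$. As this map factors through $\gamma$, the induced finite birational morphism $S/\Gamma\to S_3$ is an isomorphism (both targets being normal), $\Gamma$ acts faithfully, and $\gamma$ itself \emph{is} the bicanonical map, realized as a $(\ZZ/2)^2$-Galois, i.e.\ bidouble, cover of $S_3$. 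The routine part is the elimination producing the cubic; the main obstacle is the representation-theoretic bookkeeping identifying $H^0(2K_S)$ with $\langle s_0,s_1,s_2,s_3\rangle$ together with the degree argument certifying that the map is precisely the $\Gamma$-quotient, and so factors through no larger group.
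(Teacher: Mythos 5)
Your proposal is correct and takes essentially the same route as the paper's own proof: identify $\ls{2K_S}$ with the span of $s_0,s_1,s_2,s_3$, square the cubic relation \eqref{eq: cubic} using $(x_{i0}+x_{i1})^2=2(s_i-s_0)$ and $s_0=x_{00}^2$ to land in $S_3$, and then match the finite degree-$4$ bicanonical morphism with the $\Gamma$-quotient $\gamma$ via normality of $S_3$. The only (harmless) variations are that you spell out the identification $H^0(2K_S)=\langle s_0,\dots,s_3\rangle$ through the character decomposition \eqref{eq: L194} and the relation $q=0$ (which the paper simply asserts), and that you replace the paper's appeal to Xiao's theorem \cite{X} (the bicanonical system is not composed with a pencil) by the direct observation that ampleness of $K_S$ makes the base-point-free bicanonical morphism finite, so its image is automatically a surface.
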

\begin{proof}
The bicanonical system of $S$ is generated by the $4$ invariants quadratic forms $s_0,s_1,s_2,s_3$.
We showed in the proof of Theorem~\ref{thm: free action on $T$} that $s_0=s_1=s_2=s_3=0$ cuts
out the empty set on $T$; therefore $\ls{2K_S}$ has no fixed part and no base points.
Since $S$ is a minimal surface of general type with $p_g=0$ and $K^2\geq 2$, by \cite{X},
the bicanonical system is not composed with a pencil. Hence the image of $\varphi_{2K_S}$ is a surface.
To find its equation, we square both sides of (\ref{eq: cubic}):
\[
\nu_4^2(x_{10}+x_{11})^2(x_{20}+x_{21})^2(x_{30}+x_{31})^2=x_{00}^2l^2
\]
and use $(x_{i0}+x_{i1})^2= 2(s_i +t_i)=2(s_i-s_0)$, for $i=1,2,3$, and $s_0=x_{00}^2$,
\emph{cf}.~(\ref{eq: invariant quadratics}), (\ref{eq: invariant quadratics companion}).
Substituting, we get $8\nu_4^2(s_1-s_0)(s_2-s_0)(s_3-s_0) - s_0 l^2 =0$.
For a general choice of $\nu_0,\nu_1,\nu_2,\nu_3,\nu_4$ this cubic is irreducible, hence the cubic surface $S_3\subset \PP^3$ it defines coincides with $\varphi_{2K_S}(S)$. $S$ has no $(-2)$-curves, as by construction $K_S$ is ample, thus $\varphi_{2K_S}$ is a finite morphism of degree $4$. Since $s_0,s_1,s_2,s_3$ are invariant for the action of $H$ on $T$, $\varphi_{2K_S}$ factors through $\gamma$,
which is also a finite morphism of degree $4$. Hence, since $S_3$ is normal (\emph{cf}.~Remark~\ref{rem: nodes on cubic}), $S/\Gamma \cong S_3$ and, up to isomorphism, $\varphi_{2K_S}=\gamma$.
\end{proof}

\begin{remark}\label{rem: nodes on cubic}
For general $\nu_0,\dots,\nu_4$, the cubic $S_3\subset \PP^3$ has $3$ ordinary double points:
\begin{equation}\label{eq: nodes of S_3}
\begin{array}{c}
n_1 = (s_2-s_0=s_3-s_0=\nu_0s_0+\nu_1s_1+\nu_2s_2+\nu_3s_3=0),\\
n_2 = (s_1-s_0=s_3-s_0=\nu_0s_0+\nu_1s_1+\nu_2s_2+\nu_3s_3=0),\\
n_3 = (s_1-s_0=s_2-s_0=\nu_0s_0+\nu_1s_1+\nu_2s_2+\nu_3s_3=0);\\
\end{array}
\end{equation}
and these are the only singularities of $S_3$.
\end{remark}

Let us denote by $\theta_i\in \Gamma = H/G$ the class of $\alpha_i\beta_i$, \emph{i.e.},
\begin{equation}\label{eq: the theta_i}
\theta_i = [\alpha_i\beta_i] = \set{\alpha_i\beta_ig \mid g\in G}.
\end{equation}
By Proposition~\ref{prop: the bicanonical map of S}, $\varphi_{2K_S}$ is the quotient by the action of
 $\Gamma=\{1,\theta_1,\theta_2,\theta_3\}$.
To study this map, by the general theory of the bidouble covers
(see \cite{C}), we study its branch locus. A ramification point of $\varphi_{2K_S}$ is the image of a point
$\xx \in S$ fixed by some $\theta_i$, \emph{i.e}., for which $I_\xx \not = \set{1}$, where $I_\xx = \set{g\in \Gamma \mid g\xx = \xx}$ is the inertia group of $\xx$. When $S$ is smooth, there are $3$ possibilities for $I_\xx$:
\begin{enumerate} \renewcommand{\labelenumi}{(\alph{enumi})}
\item $I_\xx=\langle \theta_i \rangle$ and $\xx$ is an isolated fixed
point of $\theta_i$. Then, in suitable local coordinates, $\theta_i$ acts by
$(z_1,z_2) \mapsto (-z_1,-z_2)$ and $\varphi_{2K_S}(\xx)$ is a node.
\item $I_\xx=\langle \theta_i \rangle$ and $\xx$ is not an isolated fixed
point of $\theta_i$. Then, in local coordinates, $\theta_i$ acts by
$(z_1,z_2) \mapsto (-z_1,z_2)$ and the locus of all such points is a smooth curve $R_i\subset S$.
\item $I_\xx=\Gamma$. Then $\xx$ belongs to exactly two $R_i$, intersecting transversally in $\xx$.
\end{enumerate}
Let $D_i:=\varphi_{2K_S}(R_i)$ and denote by $\Delta_i$ the image of the set of
isolated fixed points of $\theta_i$ the inertia group of which is not the whole of $\Gamma$ --- as in type (a), above.
Then each $\Delta_i$ is a set of nodes of $S_3$. The bidouble cover is determined by $D_1,D_2,D_3$ and $\Delta_1,\Delta_2,\Delta_3$.
\medskip

\noindent
To describe $D_i$ we introduce some notation.
The intersection of $S_3$ with the plane $s_0+s_i=0$ splits as the union of a
line with a conic. Denote these by $L_i$ and $C_i$, respectively. In other
words, set
\begin{equation}\label{eq: equations of L_i and C_i}
\renewcommand{\arraystretch}{1.3}
\begin{array}{c}
L_i = (s_0=s_i=0) \text{ and }\\
C_i = (s_0+s_i = 16\nu_4^2(s_{i+1}-s_0)(s_{i+2}-s_0) + l^2=0),
\end{array}
\end{equation}
taking the indices in $\set{1,2,3}$, modulo $3$.

\begin{prop}\label{prop: the branch locus of the bicanonical map}
Let $D_i$, $\Delta_i$, for $i=1,2,3$, be the branch loci of the map $\varphi_{2K_S}\colon S \rt
S_3\subset \PP^3$. Then $\Delta_i=\set{n_i}$ and $D_i=C_{i+1} + L_{i-1}$, taking indices in $\set{1,2,3}$, modulo $3$.
\end{prop}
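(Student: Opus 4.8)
The plan is to compute the branch data by lifting the three involutions $\theta_1,\theta_2,\theta_3$ to $T$ and analysing their fixed loci there. Since the factor ${\mathfrak S}_3\subset\Lambda$ of Theorem~\ref{thm: moduli} cyclically permutes the indices $1,2,3$, and hence the involutions $\theta_i$, the lines $L_i$, the conics $C_i$ and the nodes $n_i$, it suffices to prove $\Delta_1=\set{n_1}$ and $D_1=C_2+L_3$; the other two cases then follow by transport of structure. As $G$ acts freely, a point of $S=T/G$ is fixed by $\theta_1=[\alpha_1\beta_1]$ exactly when one of its lifts to $T$ is fixed by some element of the coset $\alpha_1\beta_1G$, so that $\Fix_S(\theta_1)$ is the image of $\bigcup_{h\in\alpha_1\beta_1G}\Fix_T(h)$. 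I would therefore list the eight elements of this coset explicitly and, for each involution $h$, compute $\Fix_T(h)=\Fix_V(h)\cap(q=0)$ by splitting, exactly as in the proof of Theorem~\ref{thm: free action on $T$}, the fixed locus of $h$ on $\PP(1^8,2^8)$ into its $(+,+)$, $(-,+)$ and $(0,-)$ parts (the last meeting $T$ in the empty set), and intersecting the relevant eigenspaces with the hyperplane $x_{00}+x_{01}=0$, the quadrics (\ref{eq: quadrics}), and the remaining relations.

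The classification hinges on a dimension count, read off against the coordinates on $S_3$. Using $s_0=x_{00}^2$ together with $t_i=t_0=-s_0$ from (\ref{eq: invariant quadratics}) and (\ref{eq: invariant quadratics companion}), one checks that $s_0+s_i=0\iff x_{i0}=x_{i1}$ on $V$, so the plane $(s_0+s_i=0)$ pulls back to $(x_{i0}=x_{i1})$ and the plane $(s_0=0)$ to $(x_{00}=x_{01}=0)$. For two of the coset elements the locus $\Fix_V(h)$ turns out to be a surface: one whose $(-,+)$ eigenspace in the $x$-variables is $6$-dimensional, and one whose fixed $x$-locus lies inside the exceptional locus $\bigcup_{abcd\in\LL}\H_{abcd}$, where the weight $2$ variables $y_{abcd}$ are no longer determined by (\ref{eq: definition of y_abcd}) and hence contribute extra directions. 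For these two, $\Fix_V(h)\cap(q=0)$ is a curve, i.e.\ a component of $R_1$. Evaluating $s_0,\dots,s_3$ along them shows that one lies over $(s_0+s_2=0)$ and the other over $(s_0=s_3=0)$; comparing with (\ref{eq: equations of L_i and C_i}) identifies the images as the conic $C_2$ and the line $L_3$, so that $D_1=\varphi_{2K_S}(R_1)=C_2+L_3$.

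For every remaining coset element $\Fix_V(h)$ is at most one-dimensional, and $\Fix_T(h)=\Fix_V(h)\cap(q=0)$ is a finite set. Here I would carefully separate types (a) and (c): a single coset element produces one free $G$-orbit of isolated fixed points, all of which map under $(s_0:\dots:s_3)$ to the point $(s_2-s_0=s_3-s_0=l=0)=n_1$ of (\ref{eq: nodes of S_3}) — giving $\Delta_1=\set{n_1}$ — while the other finite loci map to points lying simultaneously on two of the $D_j$, hence have inertia group $\Gamma$ (type (c)) and must be discarded from $\Delta_1$. Combined with Proposition~\ref{prop: the bicanonical map of S}, this assembles the full branch data.

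The main obstacle is the bookkeeping of this eight-fold case analysis, with two genuinely delicate points. First, the weight $2$ coordinates on the exceptional locus must be handled with care: it is precisely there that an apparently small $x$-locus gains enough $y$-directions to render $\Fix_V(h)$ two-dimensional, and ignoring this would wrongly suggest that $R_1$ is empty. Second, one must rigorously distinguish an isolated fixed point whose image is a node (type (a)) from a transverse crossing of two branch curves (type (c)), which requires computing inertia groups, and must verify that the candidate node points genuinely lie on $T$ and that the two curves and the node are reduced — all of which rely on the genericity of $(\nu_0,\dots,\nu_4)$. A numerical check against the bidouble cover formalism of \cite{C}, confirming that $D_1+D_2+D_3$ and the three nodes exhaust the ramification of the degree $4$ morphism $\varphi_{2K_S}$, guarantees that no branch component has been overlooked.
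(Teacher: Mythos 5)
Your plan reconstructs the paper's own proof essentially step for step: the same reduction to $\Delta_1$ and $D_1$ by cyclic symmetry, the same enumeration of the eight elements of the coset $\theta_1=[\alpha_1\beta_1]$ with their $(+,+)$, $(-,+)$, $(0,-)$ fixed loci on $\PP(1^8,2^8)$, the same dimension counts (the $6$-dimensional $(-,+)$ eigenspace of $\alpha_2\beta_1\beta_2\beta_3$ producing the curve over $C_2$, the $y$-directions over the exceptional locus --- the surfaces $\S^{12}_{ab}$ --- producing $L_3$ for $\beta_1\beta_2$, and the positive-dimensional locus in $V$ fixed by $\alpha_2\alpha_3\beta_2\beta_3$ meeting $T$ over the node $n_1$), and the same type (a)/(c) discrimination. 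The only deviations are cosmetic bookkeeping in the sketch --- the type (c) points over $L_1\cap L_2$ in fact come from the $(+,+)$ part of $\beta_1\beta_2$ itself rather than from a ``remaining'' coset element, and identifying the curve over $(s_0+s_2=0)$ with $C_2$ still requires excluding the component $L_2$ of that plane section, which the paper does explicitly --- so this is the paper's argument in outline and would succeed as executed there.
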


\begin{proof} By cyclic symmetry, it is enough to compute $\Delta_1$ and $D_1$.
On the other hand, the fixed points of $\theta_1$ are the
images on $S$ of the points of $T$ fixed by an element of $[\alpha_1\beta_1]$.
Recall that the elements of $\theta_1=[\alpha_1\beta_1]$ are
$\alpha_1\beta_1$, $\beta_1\beta_2$, $\alpha_1\alpha_2\beta_1\beta_3$, $\alpha_1\alpha_3$, $\alpha_2\beta_1\beta_2\beta_3$,
$\alpha_3\beta_2$, $\alpha_1\alpha_2\alpha_3\beta_3$ and $\alpha_2\alpha_3\beta_2\beta_3$.

\medskip
\noindent
$\Fix_{(+,+)}(\alpha_2\beta_1\beta_2\beta_3)$ and $\Fix_{(-,+)}(\alpha_2\beta_1\beta_2\beta_3)$ are given by:
\[
\begin{array}{c}
(x_{00}-x_{01}=x_{10}=x_{11}=x_{20}+x_{21}=x_{30}=x_{31}=y_{abcd}+y_{a'bc'd}=0,\forall_{abcd \in \LL}), \\
(x_{00}+x_{01}=x_{20}-x_{21}=y_{abcd}+y_{a'bc'd}=0,\forall_{abcd \in \LL}), \\
\end{array}
\]
respectively. We have $\Fix_{(+,+)}(\alpha_2\beta_1\beta_2\beta_3)\cap T = \emptyset$. This can be seen either directly on $T$
or by noticing that its image in $S_3$ must have $s_0=s_1=s_3=0$ and, by (\ref{eq: quadrics}), $s_2=0$.
In $\Fix_{(-,+)}(\alpha_2\beta_1\beta_2\beta_3)\cap T$  we have $x_{00}=-x_{01}$ and $x_{20}=x_{21}$,
which by (\ref{eq: quadrics}) imply that $s_0+s_2=0$. We deduce that the image of $\Fix_{(-,+)}(\alpha_2\beta_1\beta_2\beta_3)\cap T$ in $S_3$ is contained in $L_2\cup C_2$. Suppose that $s_0=s_2=0$. Then
\[x_{00}=x_{01}=x_{20}=x_{21}=x_{10}x_{11}=x_{30}x_{31}=0.
\]
Assume that $x_{10}=x_{30}=0$. Then using (\ref{eq: quartic relations}), we get $y_{abcd}^2 = -y_{abcd}y_{a'bc'd}=0$, for all $(a,b,c,d)\in \LL\setminus\set{(0,0,0,0),(1,0,1,0)}$. Hence we are left with the $2$ equations:
\[
-y_{0000}^2 = x_{11}^2x_{31}^2 = 4 s_1s_3 \quad \text{and} \quad 2\nu_4y_{0000} + l=0,
\]
given by (\ref{eq: quartic relations}) and $q=0$. Eliminating $y_{0000}$, we get the equation of $C_2$ with $s_0=s_2=0$. This is independent of the choices we made. We deduce that the image of $\Fix_{(-,+)}(\alpha_2\beta_1\beta_2\beta_3)\cap T$ is contained in $C_2$. To see that the image of this locus coincides with $C_2$ it suffices to check that it is $1$-dimensional. The equations $x_{00}+x_{01}=0$ and $x_{20}-x_{21}=0$ define in $W$ a $2$-dimensional subscheme (in fact, $x_{00}+x_{01}$ is an equation of $W$).
It is clear that this subscheme is not contained in the exceptional locus of $\varphi \colon W \drt V$. Denote by $Z$ its strict transform in $V$. Assume $x_{00},x_{01}\not = 0$. Then
\[
y_{abcd}x_{0a} =  x_{1b'}x_{2c'}x_{3d'} = x_{1b'}x_{2c}x_{3d'}=-y_{a'bc'd}x_{0a'}\implies y_{abcd}=-y_{a'bc'd}.
\]
Hence on the open set $x_{00}\not = 0$ of $Z\subset V$, the equations
$y_{abcd}+y_{a'bc'd}=0$ are redundant. Hence $\dim Z=2$. Since we obtain $T$
from $V$ by taking a hypersurface section ($q=0$) we deduce that
$\Fix_{(-,+)}(\alpha_2\beta_1\beta_2\beta_3)\cap T$ is $1$-dimensional.
We conclude that the fixed points of $\alpha_2\beta_1\beta_2\beta_3$ do not contribute to $\Delta_1$ and
that their contribution to $D_1$ is $C_2$.
\medskip

\noindent
$\Fix_{(+,+)}(\beta_1\beta_2)$ and $\Fix_{(-,+)}(\beta_1\beta_2)$ are given by:
\begin{equation}
(x_{10}=x_{11}=x_{20}=x_{21}=0)\quad\text{and}\quad (x_{00}=x_{01}=x_{30}=x_{31}=0),
\end{equation}
respectively. The image of $\Fix_{(-,+)}(\beta_1\beta_2)\cap T$ equals
 $L_3=(s_0=s_3=0)$: it is clearly contained in $L_3$ and the equality follows
 since $\Fix_{(-,+)}(\beta_1\beta_2) \supset \SSS^{12}_{11}$ and
 $\SSS^{12}_{11} \cap T$ is positive dimensional. Hence $L_3 \subset D_1$.
Notice that by symmetry of the indices we have just shown that $L_1\subset
 D_2$ and $L_2 \subset D_3$.
For the image of $\Fix_{(+,+)}(\beta_1\beta_2)\cap T$, in $S_3$ we get $s_1=s_2=0$
 and then $s_0=0$, which means that $\Fix_{(+,+)}(\beta_1\beta_2)\cap T$ consists of the
 preimages of the point $L_1\cap L_2$; these are points in $R_2 \cap R_3$ --- type (c) above.
We conclude that the fixed points of $\beta_1\beta_2$ do not contribute to $\Delta_1$ and
that their contribution to $D_1$ is $L_3$.
\medskip

\noindent
$\Fix_{(+,+)}(\alpha_2\alpha_3\beta_2\beta_3)$ and $\Fix_{(-,+)}(\alpha_2\alpha_3\beta_2\beta_3)$ are given by:
\[
\begin{array}{c}
(x_{20}+x_{21}=x_{30}+x_{31}=y_{abcd}-y_{abc'd'}=0,\forall_{abcd \in \LL}), \\
(x_{00}=x_{01}=x_{10}=x_{11}=x_{20}-x_{21}=x_{30}-x_{31}=y_{abcd}-y_{abc'd'}=0,\forall_{abcd \in \LL}), \\
\end{array}
\]
respectively. The locus $\Fix_{(-,+)}(\alpha_2\alpha_3\beta_2\beta_3)\cap T$ is clearly empty. For the image of the locus
$\Fix_{(+,+)}(\alpha_2\alpha_3\beta_2\beta_3)\cap T$ we get $s_0-s_2=0$ and $s_0-s_3=0$ and then from the equation of $S_3$, $s_0l^2=0$. If $s_0=0$ then $s_2=s_3=0$ and then in $\Fix_{(+,+)}(\alpha_2\alpha_3\beta_2\beta_3)$ we get
$x_{00}=x_{01}=x_{20}=x_{21}=x_{30}=x_{31}=x_{10}x_{11}=0$. From this we deduce that all $y_{abcd}$ are zero, which together with $q=0$ forces all variables to be zero. Hence $s_0\not = 0$ and we must have $l=0$. To show that the image of
 $\Fix_{(+,+)}(\alpha_2\alpha_3\beta_2\beta_3)\cap T$ coincides with $n_1$, as in (\ref{eq: nodes of S_3}),
 it suffices to show that this locus is nonempty.  The equations $x_{20}+x_{21}=x_{30}+x_{31}=0$ define in $W$ a subscheme of dimension $1$ which is not contained in the exceptional locus
 of $\varphi_{|W} \colon  W \drt V$, hence in $V$ they define a positive-dimensional subscheme $Z'\subset V$.
If $x_{00},x_{01}\not =0$ then,
\[
y_{abcd}x_{0a} = x_{1b'}x_{2c'}x_{3d'}=  x_{1b'}x_{2c}x_{3d} = y_{abc'd'}x_{0a} \implies y_{abcd}=y_{abc'd'},
\]
which means that in the corresponding (nonempty) open set of $Z'$ the equations $y_{abcd}=y_{abc'd'}$ of $\Fix_{(+,+)}(\alpha_2\alpha_3\beta_2\beta_3)$ are redundant. Hence
\mbox{$\Fix_{(+,+)}(\alpha_2\alpha_3\beta_2\beta_3)\cap V$} is positive-dimensional. Since $T$ is obtained from $V$ by taking a hypersurface section, we deduce that $\Fix_{(+,+)}(\alpha_2\alpha_3\beta_2\beta_3)\cap T$ is nonempty.
We conclude that the fixed points of $\alpha_2\alpha_3\beta_2\beta_3$ do not contribute to $D_1$
and that their contribution to $\Delta_1$ is $\set{n_1}$.

\medskip
\noindent
The arguments we have used so far can be used to show that the $5$ remaining elements of
$[\alpha_1\beta_1]$ contribute neither to $\Delta_1$ nor to $D_1$.
\end{proof}

The cubic surface with $3$ nodes $S_3$ contains exactly $12$ lines, as represented in Figure~\ref{fig: intersections of A,A',B,B',C,C'} (courtesy of \cite{cubics}).
\begin{figure}[ht]
\includegraphics{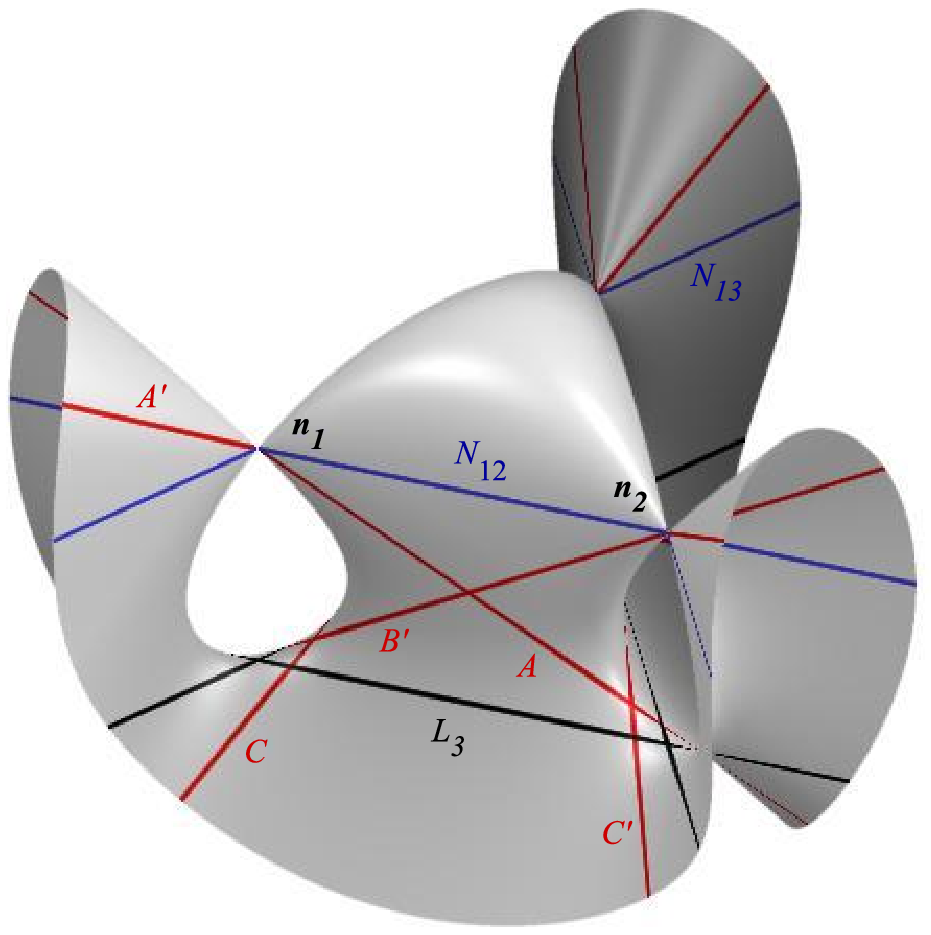}
\caption{Lines on $S_3$}\label{fig: intersections of A,A',B,B',C,C'}
\end{figure}
The plane  $(s_0=0)$ cut the lines $L_1,L_2,L_3$, forming the black triangle in the picture. 
The plane through the nodes cut the lines 
\begin{equation}\label{eq: equations of N_ij}
N_{ij}=(s_0-s_k=l=0),
\end{equation}
where $\set{i,j,k}=\set{1,2,3}$, forming the blue triangle. Each $L_i$ intersects exactly one of the $N_{ij}$: $L_1$ intersects $N_{23}$,
$L_2$ intersects $N_{13}$ and $L_3$ intersects $N_{12}$. 
Consider the plane $\mathcal{P}_1$ through $L_1$ and $n_1$, given by
\begin{equation}\label{eq: plane through L1 and n1}
(\nu_0+\nu_2+\nu_3)s_0 + \nu_1s_1 =0,
\end{equation}
and the analogous planes $\mathcal{P}_2$ through $L_2$ and $n_2$ and
$\mathcal{P}_3$ through $L_3$ and $n_3$.
$S_3\cap \mathcal{P}_i$ splits as
union of $3$ lines: $S_3\cap \mathcal{P}_i=L_1 \cup A \cup A'$,
$S_3\cap \mathcal{P}_2 = L_2 \cup B \cup B'$ and $S_3\cap \mathcal{P}_3 = L_3 \cup C \cup C'$.
We have labeled these last 6 lines as in Figure~\ref{fig: intersections of A,A',B,B',C,C'}, so that
$A,B,C$ are pairwise disjoint.
\medskip

Let $\zeta \colon \Sigma \rightarrow S_3$ be the the blow-up of the $3$ nodes,
and let $E_i$ denote the  \mbox{exceptional} divisor of $n_i$.
With abuse of notation, let us denote by $A$, $A'$, $B$, $B'$, $C$, $C'$, $L_i$, $N_{ij}$
the strict transforms in $\Sigma$
of the namesake lines. Similarly we do not change the notation for the strict
transforms in $\Sigma$ of $C_i\subset S_3$.
Denote by $H_{\Sigma}$ be the pull-back of an hyperplane section.
Since  $K_{\Sigma}=-H_{\Sigma}$, the strict transform of every line in $S_3$ is a $(-1)$-curve.
The curves $N_{12}$, $N_{13}$, $N_{23}$, $A'$, $B'$, $C'$ are pairwise disjoint rational curves with self-intersection
$-1$; by Castelnuovo's criterion we can contract them to a smooth rational surface with $K^2=9$.
Therefore, the contraction of these curves yields a morphism $\xi \colon \Sigma \rightarrow \PP^2$.
Again, with abuse of notation, we shall continue using the same notation for a curve in $S_3$,
its strict transform in $\Sigma$, and, when it is not contracted to a point, its image in $\PP^2$.
Let us denote by $r_{12},r_{13},r_{23}$ and by $\xx_1,\xx_2,\xx_3$, the points of $\PP^2$ to which $\xi$ contracts
$N_{12},N_{13},N_{23}$ and $A',B',C'$, respectively. In $\PP^2$ we get the configuration of
curves of Figure~\ref{fig: configuration in the plane}.
\begin{figure}[h]\label{fig: twc}
\begin{center}
\includegraphics{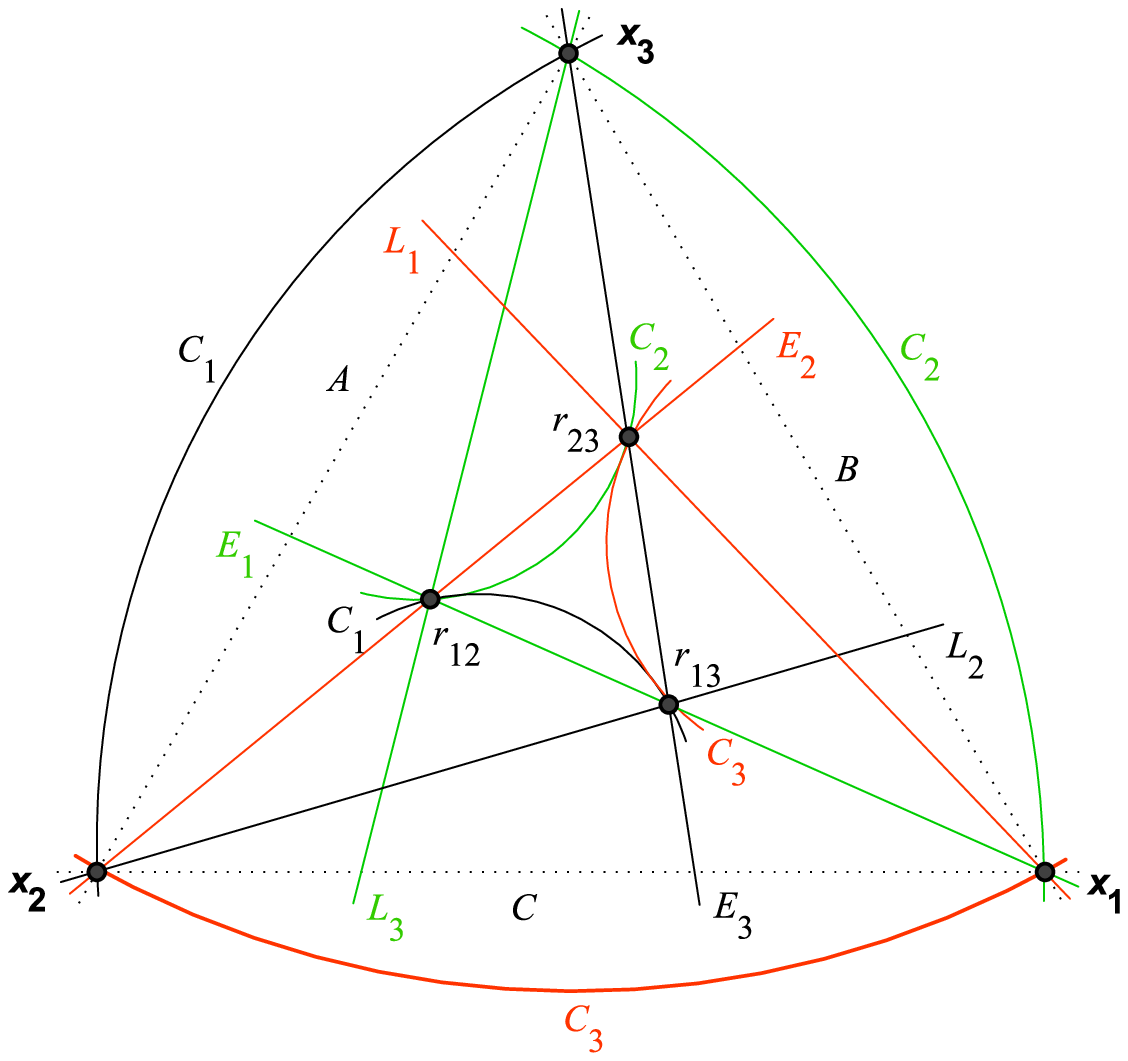}
\end{center}
\caption{The branch divisors of $\gamma'' \colon S'' \rt \PP^2$}\label{fig: configuration in the plane}
\end{figure}
We leave to the reader the straightforward check that $L_1,L_2,L_3$, $E_1,E_2,E_3$, $A$, $B$, $C$ in $\PP^2$ are in the
configuration of Figure~\ref{fig: configuration in the plane}. As for $C_2$, using the equations of
$C_2$ and $N_{12}$, (\ref{eq: equations of L_i and C_i}) and (\ref{eq: equations of N_ij}), we see that it meets $N_{12}$ in $S_3$.
Similarly $C_2$ meets $N_{23}$. Hence in the plane $C_2$ contains the points $r_{12}$, $r_{23}$.
To see that $C_2$ contains the points $\xx_1$ and $\xx_3$ it is enough to show that, in $S_3$,
$C_2$ meets the lines $A,A'$ and the lines $C,C'$.
Indeed, as $C_2=H_{S_3}-L_2$ and $A+A'=H_{S_3}-L_1$ in $S_3$, we have
$C_2(A+A')=(H_{S_3}-L_2)(H_{S_3}-L_1)=H^2_{S_3}-H_{S_3}L_1-H_{S_3}L_2+L_1L_2 = 2$.
Likewise one shows that $C_2(C+C')=2$. Additionally, $C_2(B+B')=0$,
hence $C_2$ does not contain $\xx_2$. The conics $C_1$ and $C_3$ have similar properties, obtained by cyclic
permutation of the indices $\set{1,2,3}$.

\begin{remark}\label{rmk: the general branch divisors}
Consider the following commutative diagram:
\begin{equation}\label{eq: diag:SS'S''}
\xymatrix{
S\ar[d]_\gamma&S'\ar[d]_{\gamma'}\ar[r]^{\hat{\xi}}\ar[l]_{\hat{\zeta}}&S''\ar[d]^{\gamma''}\\
S_3&\Sigma\ar[r]_{\xi}\ar[l]^{\zeta}&\PP^2\\
}
\end{equation}
where $S'$, $S''$, $\gamma'$, $\gamma''$, $\hat{\zeta}$, $\hat{\xi}$ are constructed as fiber products
to make both squares cartesian.
Notice that $\hat{\xi}$ is the contraction of the preimages of $N_{12}$, $N_{13}$, $N_{23}$,
$A'$, $B'$, $C'$. All horizontal maps in (\ref{eq: diag:SS'S''}) are birational morphisms
and all vertical maps are bidouble covers. Consider each of the
non-trivial involutions of $\Gamma$, $\theta_i$, and denote by $D_i'$ and $D_i''$ the images via $\gamma'$ and $\gamma''$ of the fix locus of $\theta_i$.
According to Proposition~\ref{prop: the branch locus of the bicanonical map}, $\theta_i$ fixes each of the $2$ pre-images of the node $n_i$. 
Since these are smooth isolated fixed points for $\theta_i$, $\theta_i$ fixes each point in the 
exceptional divisor of their blow up and accordingly $E_i$ is in the branch divisor of $\gamma''$ associated with $\theta_i$.
In conclusion, we have
\begin{equation}\label{eq: branch divisors for general T in N}
D''_1=E_1+C_2+L_3\ \ \ \
D''_2=E_2+C_3+L_1\ \ \ \
D''_3=E_3+C_1+L_2.
\end{equation}
In the Figure~\ref{fig: configuration in the plane} we have depicted the divisor $D''_1$ in green and in red the divisor $D''_2$.
Note that $S''$ is singular and $\hat{\xi}$ is a resolution of its singularities.
\end{remark}

\begin{theorem}\label{thm: The Burniat pencil}
A general surface in the $1$-dimensional linear subsystem
\[
\mathcal{B}=\set{T=V\cap(q=0)\mid (\nu_0,\nu_1,\nu_2,\nu_3,\nu_4)=(-\nu,\nu,\nu,\nu,\nu_4), \text{ for }(\nu,\nu_4)\in \PP^1}\subset \mathcal{N}
\]
is a surface with $24$ isolated rational double points as only singularities and $G$ acts freely on it. The quotient
$T/G$ is the canonical model of a tertiary Burniat surface and, conversely, every tertiary Burniat surface arises in this way.
\end{theorem}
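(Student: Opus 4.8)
The plan is to exploit the bidouble cover description of $S=T/G$ developed in Proposition~\ref{prop: the bicanonical map of S}, Proposition~\ref{prop: the branch locus of the bicanonical map} and Remark~\ref{rmk: the general branch divisors}, specialising the branch data to the subpencil $\mathcal{B}$ and matching it against the classical Burniat configuration. Freeness of the action is the easiest point. On $\mathcal{B}$ we have $(\nu_0,\nu_1,\nu_2,\nu_3)=(-\nu,\nu,\nu,\nu)$, so $\nu_1\nu_2\nu_3=\nu^3\neq 0$ and $\nu_0-\nu_1-\nu_2-\nu_3=-4\nu$; the obstruction recorded in Remark~\ref{rmk: needed for Burniat pencil} then reads $-4\nu+\delta\nu_4=0$ for $\delta$ in a fixed finite set, which fails for all but finitely many $(\nu:\nu_4)\in\PP^1$. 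Hence $G$ acts freely on a general $T\in\mathcal{B}$, including on its singular points.

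The heart of the argument is the degeneration of the branch locus. First I would substitute $l=\nu(-s_0+s_1+s_2+s_3)$ into the defining equations~(\ref{eq: equations of L_i and C_i}) of the conics $C_i$. Restricting to the plane $s_0+s_i=0$ and writing $p=s_{i+1}-s_0$, $q=s_{i+2}-s_0$, one computes $l=\nu(p+q)$ on that plane, so $C_i$ becomes the binary quadric $\nu^2p^2+(16\nu_4^2+2\nu^2)pq+\nu^2q^2=0$. This factors over $\CC$ into two linear forms, so each $C_i$ splits as a pair of lines on $S_3$. Pushing the branch divisors $D''_i=E_i+C_{i+1}+L_{i-1}$ of (\ref{eq: branch divisors for general T in N}) down to $\PP^2$ via $\xi$, every component is now a line, and I would check that the nine lines so obtained form exactly the Burniat triangle: three lines through each of the three vertices $\xx_1,\xx_2,\xx_3$, with three concurrency points of type $(1,1,1)$. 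By the theory of bidouble covers \cite{C} the building data is determined by this configuration, so $\gamma''\colon S''\to\PP^2$ is a tertiary Burniat surface and, chasing diagram~(\ref{eq: diag:SS'S''}) back, $S=T/G$ is its canonical model.

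The hard part will be the precise local analysis at the three points of type $(1,1,1)$ and the resulting singularity count. At each such point all three branch lines meet with distinct tangents, and the standard local model of a bidouble cover (cf.~\cite{C}) produces a single rational double point on the canonical model $S$; the three points thus contribute exactly three nodes to $S$, and no further singularities arise provided the degenerate conics meet the $L_i$ and $E_i$ only in transverse double points and in these three triple points — where the cover is smooth, respectively nodal. Controlling that no spurious tangencies appear, which would create worse singularities, is the delicate bookkeeping I expect to occupy most of the work. Since $G$ acts freely, $T\to S$ is \'etale of degree $8$, so each node of $S$ lifts to $8$ nodes of $T$; this yields the asserted $24$ isolated rational double points and no others, and shows $T$ is smooth away from them.

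Finally, for the converse I would argue by dimension. The tertiary Burniat surfaces are known to form an irreducible $1$-dimensional family, and the forward direction exhibits a morphism from $\mathcal{B}\cong\PP^1$ into it. It remains to see that this morphism is non-constant: the splitting of the conics $C_i$ is governed by the discriminant $64\nu_4^2(4\nu_4^2+\nu^2)$ of the quadric above, and the two lines into which each $C_i$ breaks move genuinely with $(\nu:\nu_4)$, so the cross-ratios of the Burniat configuration are non-constant along $\mathcal{B}$. A non-constant morphism between irreducible curves is dominant, hence surjective, so every tertiary Burniat surface arises as some $T/G$ with $T\in\mathcal{B}$.
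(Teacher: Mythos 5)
Your specialisation of the branch data is the step that fails. You push the generic formula (\ref{eq: branch divisors for general T in N}), $D''_i=E_i+C_{i+1}+L_{i-1}$, down to the pencil $\mathcal{B}$ after splitting each conic $C_i$ into two lines (that splitting itself is correct --- your binary quadric and its discriminant $64\nu_4^2(4\nu_4^2+\nu^2)$ are equivalent to the paper's observation that the plane $\mathcal{P}_i$ of (\ref{eq: plane through L1 and n1}) degenerates to $s_0+s_i=0$). But the branch data does \emph{not} specialise naively along $\mathcal{B}$: the fixed loci of the elements of $[\alpha_1\beta_1]$ jump. For general $T\in\mathcal{N}$ the element $\alpha_1\alpha_2\alpha_3\beta_3$ acts freely on $T$, whereas for $T\in\mathcal{B}$ it fixes exactly the eight nodes $(\pm 1,\pm 1,\pm\epsilon)$ of $T$ lying over $n_3$; hence $\Delta_1=\set{n_1,n_3}$ instead of $\set{n_1}$, and one then needs the tangent-cone analysis at the nodes $\hat{n}_1,\hat{n}_2,\hat{n}_3$ of $S$ to decide which exceptional curve enters which branch divisor. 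The outcome, (\ref{eq: branch divisors for Burniat pencil}), is $D''_1=E_3+B+L_3$ and $D''_3=E_1+A+L_2$: the curves $E_1$ and $E_3$ trade places relative to your naive triple $(E_1+B+L_3,\ E_2+C+L_1,\ E_3+A+L_2)$, which differs from the correct one even as an unordered triple and therefore encodes different building data --- your planned check that ``the nine lines form exactly the Burniat triangle'' would fail for the grouping you start from. A second, structural, problem is circularity in the singularity count: Propositions~\ref{prop: the bicanonical map of S} and~\ref{prop: the branch locus of the bicanonical map} carry the hypothesis that $T$ has at most canonical singularities, so you cannot derive the $24$ rational double points of $T$ from the bidouble-cover description of $S$; they must be established first and independently. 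The paper does this by direct computation: in the chart $\Omega_{00}$ the pencil is spanned by $F_1,F_2$ of (\ref{eq: pencil eq in omega00}), the Hessian of $F_1$ locates exactly the $24$ nodes (\ref{eq: the set of 24 nodes}), and a separate chart computation (the polynomial $F_3$ on $\Omega_{11}^{23}$) excludes singularities along $x_{00}=0$ --- a locus your plan never examines.

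Your converse is also weaker than the assertion. A non-constant map from the good open subset of $\mathcal{B}\cong\PP^1$ to the Burniat parameter curve $\lambda\in\CC^*\setminus\set{1}$ is merely dominant between non-proper curves, so its image is only cofinite: you obtain all but possibly finitely many tertiary Burniat surfaces, while the theorem claims every one arises. The paper closes this by an explicit parameter match: for each $\lambda$ it exhibits four cubics $s_0,\dots,s_3$ through the six marked points, verifies the identity $(\lambda+1)^2(s_1-s_0)(s_2-s_0)(s_3-s_0)=-2\lambda s_0(s_1+s_2+s_3-s_0)^2$, and reads off $-\nu_0=\nu_1=\nu_2=\nu_3=\sqrt{-\lambda}$, $\nu_4=4(\lambda+1)$. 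Your freeness argument via Remark~\ref{rmk: needed for Burniat pencil} does agree with the paper and is fine as it stands.
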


\begin{proof}
Analyzing the base locus of $\mathcal{B}$, like in the proof of Theorem~\ref{thm: free action on $T$}, we get
$\Sing(T)\subset (l=\sum_{abcd\in\LL} (-1)^a y_{abcd}=0)$, where $l=\nu(-s_0+s_1+s_2+s_3)$.
Let $T_1,T_2\in \mathcal{B}$ be given in $V$ by $s_0-s_1-s_2-s_3 =0$ and $\sum_{abcd\in\LL}(-1)^a y_{abcd}=0$.

\medskip

\noindent
Fix coordinates $x_1,x_2,x_3$ on $\CC^3$. Consider the open set $\Omega_{00}= V\setminus\set{x_{00}=0}$ and the map $\xi_1\colon \Omega_{00} \rt \CC^3$ given by
\begin{equation}\label{eq: local iso}
\xi_1(\xx,\yy)=\pare{\frac{x_{10}}{x_{00}},\frac{x_{20}}{x_{00}},\frac{x_{30}}{x_{00}}}.
\end{equation}
 Since $x_{00}x_{01}\not =0$ and (\ref{eq: quadrics}) hold,  $\xi_1(\Omega_{00})$ is
contained in $\CC^3\setminus \set{x_1x_2x_3=0}$ and the map $\xi_2 \colon \CC^3\setminus \set{x_1x_2x_3=0} \rt \Omega_{00}$
given by
\begin{equation}\label{eq: definition of xi_1}
\xi_2(x_1,x_2,x_3)=(1,-1,x_1,-1/x_1,x_2,-1/x_2,x_3,-1/x_3,\dots),
\end{equation}
is the inverse of $\xi_1$.
Let $\mathcal{B}_{\Omega_{00}}$ denote the  pencil $\set{T\cap \Omega_{00} : T\in \mathcal{B}}$ on $\Omega_{00}$.
The pencil $\xi_2^* \mathcal{B}_{\Omega_{00}}$ is spanned by
$\xi_1(T_1\cap \Omega_{00})$ and $\xi_1(T_2\cap \Omega_{00})$, whose equations are: 
\begin{equation}\label{eq: pencil eq in omega00}
\renewcommand{\arraystretch}{1.4}
\begin{array}{c}
F_1 := 1-\frac{1}{2}\sum_{i>0}\pare{x_{i}^2+1/x_{i}^2}=0,\\
F_2 := (x_{1}-1/x_{1})(x_{2}-1/x_{2})(x_{3}-1/x_{3})=0,
\end{array}
\end{equation}
respectively. We show next that a general member of  $\xi_2^* \mathcal{B}_{\Omega_{00}}$ is smooth outside a (fixed) set of $24$ rational double points.
Since $\frac{\partial F_1}{\partial x_i}=1/x_i^3-x_i$, the singularities of $\xi_1(T_1\cap \Omega_{00})$
lie in the set defined by $x_1^4=x_2^4=x_3^4=1$. These equations define $64$ points. However only the $24$ points of the set
\begin{equation}\label{eq: the set of 24 nodes}
\mathfrak{D}=\set{(\pm \epsilon,\pm 1,\pm 1),  (\pm 1,\pm \epsilon,\pm 1),  (\pm 1,\pm 1,\pm \epsilon)},
\end{equation}
where $\epsilon$ is a square root of $-1$, actually belong to $\xi_1(T_1\cap \Omega_{00})$. As $\frac{\partial^2 F_1}{\partial x_i\partial x_j}=0$, for $i\not = j$, and
$\frac{\partial^2 F_1}{\partial x_i^2}=-3/x_i^4-1$, we see that the determinant of the Hessian matrix is nonzero at the points
of $\mathfrak{D}$, showing that they are indeed ordinary double points of $\xi_1(T_1\cap \Omega_{00})$. For every point of $\mathfrak{D}$, two factors of $F_2$
vanish, thus it is clear that $\xi_1(T_2\cap \Omega_{00})$ is also singular at the points of $\mathfrak{D}$. This shows that a general member
of $\xi_2^*\mathcal{B}_{\Omega_{00}}$ has a rational double point at each point of $\mathfrak{D}$. Since $\xi_1(T_1\cap\Omega_{00})$
is smooth away from $\mathfrak{D}$ it follows that a general member of $\xi_2^*\mathcal{B}_{\Omega_{00}}$ is also smooth away from $\mathfrak{D}$.
\medskip

\noindent
We proceed to show that a general $T\in \mathcal{B}$ is smooth along \mbox{$T\cap (x_{00}=0)=T\setminus \Omega_{00}$}.
If $x_{00}=0$, then $x_{01}=-x_{00}=0$ and, from (\ref{eq: quadrics}), $x_{10}x_{11}=x_{20}x_{21}=x_{30}x_{31}=0$.
Let $\Omega_{ab}^{ij}$ denote the open set of $V$ given by $x_{ia}x_{jb}\not = 0$.
Since a general $T\in \mathcal{B}$ has $\Sing(T)\subset (l=0)$, if all the variables $x_{00},x_{01},\dots,x_{30},x_{31}$
but one vanish at a point of $\Sing(T)\cap(x_{00}=0)$, then from $l=0$ we deduce the remaining one must vanish also.
From this, using (\ref{eq: quartic relations}) and the fact that $\Sing(T)$ is contained in $T_2=(\sum_{abcd\in\LL}(-1)^a y_{abcd}=0)$,
we deduce that all variables must vanish, which is impossible. Hence, for a general $T\in \mathcal{B}$, there exist $i,j,a,b$
with $j>i>0$ such that $\Sing(T)\cap (x_{00}=0)\subset \Omega_{ab}^{ij}$. Since the role that $x_{10},x_{11},x_{20},x_{21},x_{30},x_{31}$
play in the equations of $V$ and $T\in \mathcal{B}$ is symmetric, we may reduce to showing that a general member $T\in \mathcal{B}$ is
smooth along $T\cap(x_{00}=0)\cap \Omega^{23}_{11}$. Similarly to what we did earlier, we consider a map
$\zeta_1 \colon \Omega_{11}^{23}\rt \PP(1^3,2)$ given by $\zeta_1(\xx,\yy)=(x_{00},x_{21},x_{31},y_{0000})$.
This map has image the (affine) open set defined by $x_{21}x_{31}y_{0000}\not = 0$. This is a consequence of the
quartic relation $y_{0000}y_{1100}-x_{21}^2x_{31}^2$ in (\ref{eq: quartic relations}) which holds in $V$. As before,
to show that an inverse $\zeta_2 \colon \PP(1^3,2)\setminus (x_{21}x_{31}y_{0000}\not = 0)\rt \Omega_{11}^{23}$ to
$\zeta_1$ exists, it is enough to express every variable on $\Omega_{11}^{23}$ has a rational function of $x_{00},x_{21},x_{31},y_{0000}$.
Using the equations of $V$, i.e., (\ref{eq: quadrics}), (\ref{eq: cubic relations}), (\ref{eq: quartic relations}) and $x_{00}+x_{01}=0$,
on $\Omega_{11}^{23}$ we have:
\begin{equation}\label{eq: L1021}
\renewcommand{\arraystretch}{1.8}
\begin{array}{l}
\displaystyle x_{01}=-x_{00},\quad y_{1100} = \frac{x_{21}^2x_{31}^2}{y_{0000}},\\
\displaystyle x_{20}=\frac{x_{00}x_{01}}{x_{21}}=-\frac{x_{00}^2}{x_{21}},\quad x_{30}=\frac{x_{00}x_{01}}{x_{31}}=-\frac{x_{00}^2}{x_{31}},\\
\displaystyle x_{10}=\frac{x_{01}x_{21}x_{31}}{y_{0000}}=-\frac{x_{00}x_{21}x_{31}}{y_{0000}}, \quad
x_{11}=\frac{x_{00}x_{21}x_{31}}{y_{1100}}=\frac{x_{00}y_{0000}}{x_{21}x_{31}}.
\end{array}
\end{equation}
which are all rational functions of $x_{00},x_{21},x_{31},y_{0000}$. Moreover
\begin{equation}\label{eq: L1032}
\displaystyle
y_{abc1} = \frac{x_{0a'}x_{1b'}x_{2c'}}{x_{31}}\quad \text{and} \quad y_{ab1d} = \frac{x_{0a'}x_{1b'}x_{3d'}}{x_{21}},
\end{equation}
which, using (\ref{eq: L1021}), can be seen to be also rational functions of $x_{00},x_{21},x_{31},y_{0000}$.
\medskip

\noindent
Consider $\mathcal{B}_{\Omega_{11}^{23}}$ the pencil $\set{T\cap \Omega_{11}^{23} : T \in \mathcal{B}}$.
Next we show that a general member of
$\zeta_2^*\mathcal{B}_{\Omega_{11}^{23}}$ is smooth along $x_{00}=0$. It suffices to show that $\zeta_1(T_1\cap \Omega_{11}^{23})$
is smooth along $x_{00}=0$. Additionally, since $\zeta_1(T_1\cap \Omega_{11}^{23})$ does not meet the singular point of $\PP(1^3,2)$ we
can reduce to showing quasi-smoothness, or, more precisely, non-vanishing of the Jacobian matrix of the polynomial $F_3$, obtained
from the equation of $\zeta_1(T_1\cap\Omega_{11}^{23})$ by setting $y_{0000}=1$, at the points of $\zeta_1(T_1\cap \Omega_{11}^{23}\cap (x_{00}=0))$.
From (\ref{eq: L1021}) we deduce
$
F_3 = x_{00}^2 x_{21}^2 x_{31}^2 - x_{00}^2 x_{21}^4 x_{31}^4 - x_{00}^2 - x_{00}^4  x_{31}^2 - x_{21}^4 x_{31}^2 - x_{00}^4 x_{21}^2 - x_{21}^2 x_{31}^4
$
using, to ease notation, $x_{00}$, $x_{21}$, $x_{31}$ as coordinates for the corresponding affine piece of $\PP(1^3,2)$.
Hence, at $x_{00}=0$,
\[
\frac{\partial F_3}{\partial x_{21}}=-4x_{21}^3x_{31}^2-2x_{21}x_{31}^4\quad  \text{and}\quad
\frac{\partial F_3}{\partial x_{31}}=-2x_{21}^4x_{31}-4x_{21}^2 x_{31}^3
\]
which have no common zeros for $x_{21}x_{31}\not = 0$.
\medskip

\noindent
We have shown that a general member of $\mathcal{B}$ is a smooth away from a set of $24$ rational
double points given by $\xi_2(\mathfrak{D})$ where $\mathfrak{D}$ is the set of points (\ref{eq: the set of 24 nodes}),
in other words, the set of points given in local coordinates by (\ref{eq: the set of 24 nodes}).
Notice that by Remark~\ref{rmk: needed for Burniat pencil}, the group $G$ acts freely on a general member of $\mathcal{B}$.
To show that $S:=T/G$ for a general $T$ in $\mathcal{B}$ is the canonical model of a Burniat surface we analyze
(\ref{eq: diag:SS'S''}) for this case in detail. We start by observing that if
$(\nu_0,\nu_1,\nu_2,\nu_3,\nu_4) = (-\nu,\nu,\nu,\nu,\nu_4)$ then the plane
$\mathcal{P}_1$ defined in (\ref{eq: plane through L1 and n1}) is nothing other that the
plane $(s_0+s_1=0)$ and hence the conic $C_1$ splits as $A \cup A'$. Likewise, $C_2$ splits up
as $B\cup B'$ and $C_3$ splits up as $C\cup C'$. Recall that $n_1\in A\cap A'$,  $n_2\in B\cap B'$
and $n_3\in C\cap C'$. Also, the nodes become $n_1=(1,-1,1,1)$, $n_2=(1,1,-1,1)$ and $n_3=(1,1,1,-1)$.
Their pre-image in $T$ coincides with the $24$ ordinary nodes of $T$. Indeed we see that $8$ points of $T$, written in local coordinates
of $\Omega_{00}\simeq \CC^3\setminus (x_1x_2x_3=0)$ as $(\pm \epsilon, \pm 1, \pm 1)$, map to $n_1$; the $8$ points $(\pm 1, \pm \epsilon, \pm 1)$ map to $n_2$ and
the $8$ points $(\pm 1, \pm 1, \pm \epsilon)$ map to $n_3$. Since $G$ acts freely on $T$, each of these sets of $8$ points maps to a single
point in $S:=T/G$  which is a node of $S$ and is fixed by every element of $\Gamma$.
Denote these $3$ nodes of $S$ by $\hat{n}_1,\hat{n}_2,\hat{n}_3$.
Using the notation of Proposition~\ref{prop: the branch locus of the bicanonical map}, we claim that
\[\renewcommand{\arraystretch}{1.3}
\begin{array}{l}
\Delta_1 = \set{n_1,n_{3}},\quad D_1=B+B'+L_3, \\
\Delta_2 = \set{n_2,n_{1}},\quad D_2=C+C'+L_1, \\
\Delta_3 = \set{n_3,n_{2}},\quad D_3=A+A'+L_2.
\end{array}
\]
We compute $\Delta_1$ and $D_1$
by analyzing the fixed loci on $T$ of the elements of:
\[
[\alpha_1\beta_1]=\set{\alpha_1\beta_1, \beta_1\beta_2, \alpha_1\alpha_2\beta_1\beta_3, \alpha_1\alpha_3, \alpha_2\beta_1\beta_2\beta_3,
\alpha_3\beta_2, \alpha_1\alpha_2\alpha_3\beta_3,\alpha_2\alpha_3\beta_2\beta_3}.
\]
The computation of $\Delta_2$, $D_2$, $\Delta_3$ and $D_3$ follows by symmetry.
Recall from the proof of Proposition~\ref{prop: the branch locus of the bicanonical map},
that $\Fix(\beta_1\beta_2)\cap T$ maps to $L_3$ on $S_3$, $\Fix(\alpha_2\beta_1\beta_2\beta_3)\cap T$
maps to $C_2$ which is now $B\cup B'$ and that $\Fix(\alpha_2\alpha_3\beta_2\beta_3)\cap T$ maps to $n_1$.
With the assumptions of that proposition, all of the other elements of $[\alpha_1\beta_1]$ have empty fixed locus on $T$.
In case of $T\in \mathcal{B}$ this is no longer true.
Indeed all but $\alpha_1\alpha_2\alpha_3\beta_3$ have empty fixed locus on $T$.
To see this, recall that $\Fix_{(+,+)}(\alpha_1\alpha_2\alpha_3\beta_3)$ and $\Fix_{(-,+)}(\alpha_1\alpha_2\alpha_3\beta_3)$ are given by:
\[
\begin{array}{c}
(x_{00}-x_{01}=x_{10}-x_{11}=x_{20}-x_{21}=x_{30}+x_{31}=y_{abcd}+y_{a'bc'd}=0,\forall_{abcd \in \LL}), \\
(x_{00}+x_{01}=x_{10}+x_{11}=x_{20}+x_{21}=x_{30}-x_{31}=y_{abcd}-y_{abc'd'}=0,\forall_{abcd \in \LL}), \\
\end{array}
\]
respectively. It is easy to see that $\Fix_{(+,+)}(\alpha_1\alpha_2\alpha_3\beta_3)\cap T$ is empty.
However the locus $\Fix_{(-,+)}(\alpha_1\alpha_2\alpha_3\beta_3)$ now contains the set of points $\set{(\pm 1,\pm 1,\pm \epsilon)}$, given in
local coordinates in $\Omega_{00}$. Since $n_1$ and $n_3$ are the only isolated fixed points of $\theta_1$ (notice that $n_2\in B\cap B'$) 
we have $\Delta_1=\set{n_1,n_3}$ and $D_1=B+B'+L_3$.
\medskip

\noindent
We claim that the branch loci of $\gamma'\colon S' \rt \Sigma$ are:
\[\renewcommand{\arraystretch}{1.3}
\begin{array}{l}
D'_1=E_3+B+B'+L_3, \\
D'_2=E_2+C+C'+L_1, \\
D'_3=E_1+A+A'+L_2.
\end{array}
\]
Again by symmetry it is enough to compute $D'_1$. To do this we must analyze the action of
$\theta_1$ on the tangent cone at $\hat{n}_1,\hat{n}_2$ and $\hat{n}_3$, showing that it fixes every tangent direction
in the tangent cone at $\hat{n}_3$ and that it does not act in this way on the tangent cones at the nodes $\hat{n}_1$ and $\hat{n}_2$.
This will mean that the action of $\theta_1$ on $S'$ will fix pointwise $E_3$ and will not fix pointwise $E_1$ and $E_2$.
(Recall that $S'$ can be obtained by blowing up $\hat{n}_1,\hat{n}_2,\hat{n}_3$.)
To analyze the action of $\theta_1$ on each of the nodes we will study the action of the corresponding involutions
of $[\alpha_1\beta_1]$ on the local model given by $\Omega_{00}\subset V$.
As we showed earlier, the involutions in $[\alpha_1\beta_1]$
which fix in
$T$ points in the pre-image of $\hat{n}_1,\hat{n}_2,\hat{n}_3$ are $\alpha_2\alpha_3\beta_2\beta_3$, $\alpha_2\beta_1\beta_2\beta_3$ and
$\alpha_1\alpha_2\alpha_3\beta_3$. We have seen that $\alpha_2\beta_1\beta_2\beta_3$ fixes a positive dimensional locus containing
the pre-images of $\hat{n}_2$, hence, locally at each pre-image, this involution cannot fix every tangent direction to it and therefore $E_2$
is not in $D'_1$. The involution $\alpha_2\alpha_3\beta_2\beta_3$, whose fixed locus on $T$ maps to $\set{\hat{n}_1}$,
can be written in the local model $\CC^3\setminus (x_1x_2x_3=0)\simeq \Omega_{00}$ as:
\[
(x_1,x_2,x_3) = \pare{\frac{x_{10}}{x_{00}},\frac{x_{20}}{x_{00}},\frac{x_{30}}{x_{00}}}\mapsto \pare{\frac{x_{10}}{x_{00}},-\frac{x_{21}}{x_{00}},-\frac{x_{31}}{x_{00}}} = \pare{{x_1},\frac{1}{x_2},\frac{1}{x_3}}.
\]
(Recall that $x_{00}+x_{01}=0$ and $x_{00}x_{01}=x_{i0}x_{i1}\implies x_{00}/x_{i0}=x_{i1}/x_{01}$.)
We see that $(\pm \epsilon,\pm 1,\pm 1)$ are fixed. However we see also that the fixed loci of this involution in the
ambient $\CC^3\setminus (x_1x_2x_3=0)$ is a set of four lines, going through the $8$ points $(\pm \epsilon, \pm 1,\pm 1)$.
Hence this involution does not fix all of the tangent directions at any of these points. This implies that $\theta_1$ does
not fix all of the tangent directions of $\hat{n}_1$ and thus $E_1$ is also not in $D'_1$. Finally, writing
$\alpha_1\alpha_2\alpha_3\beta_3$ in the local model:
\[
(x_1,x_2,x_3) = \pare{\frac{x_{10}}{x_{00}},\frac{x_{20}}{x_{00}},\frac{x_{30}}{x_{00}}}\mapsto \pare{\frac{x_{11}}{x_{01}},\frac{x_{21}}{x_{01}},-\frac{x_{31}}{x_{01}}} = \pare{\frac{1}{x_1},\frac{1}{x_2},-\frac{1}{x_3}},
\]
we see that this involution fixes exactly the set of points given in local coordinates by $\set{(\pm 1, \pm 1, \pm \epsilon)}$.
This coincides with the pre-image of $\set{\hat{n_3}}$. Moreover, since it fixes only finitely many points, it must
fix every tangent direction at each of these points. We conclude that $\theta_3$ fixes every tangent direction of ${\hat n}_3$
and thus $D'_1=E_3+B+B'+L_3$.
\medskip

\noindent
It is now easy to compute the ramification divisors of $\gamma'' \colon S'' \rt \PP^2$. Since $S''$ can be obtained
by contracting the pre-images of $N_{23},N_{13},N_{12},A',B',C'$ it is clear that
\begin{equation}\label{eq: branch divisors for Burniat pencil}
\renewcommand{\arraystretch}{1.3}
\begin{array}{l}
D''_1=E_3+B+L_3, \\
D''_2=E_2+C+L_1, \\
D''_3=E_1+A+L_2.
\end{array}
\end{equation}
With the help of Figure~\ref{fig: configuration in the plane} we see that these are exactly the branch loci for a
tertiary Burniat surface.
\medskip

\noindent
The space of tertiary Burniat surfaces is parameterized by $\lambda \in \CC^*\setminus \{1\}$ as follows. In Figure~\ref{fig: BurniatPicture},
we may always choose coordinates $(u_0,u_1,u_2)$ such that $\xx_1=(1,0,0)$, $\xx_2=(0,1,0)$, $\xx_3=(0,0,1)$ and the further $3$
marked points are respectively $(1,1,1)$, $(1,1,\lambda)$ and $(\lambda,1,\lambda)$.
The bicanonical image of the Burniat surface is the image of $\PP^2$ in $\PP^3$ by the linear system of
cubics through the $6$ marked points. If we choose, as basis for this system, the cubics:
\[\renewcommand{\arraystretch}{1.2}
\begin{array}{l}
s_0=-\frac12 (u_0-\lambda u_1)(u_1-u_2)(u_2-\lambda u_0)\\
s_1=(1-\lambda)u_0(u_1-u_2)(\lambda u_1-u_2)-s_0\\
s_2=(1-\lambda)u_1(u_2-u_0)(u_2-\lambda u_0)-s_0\\
s_3=(1-\lambda)u_2(u_0-u_1)(u_0-\lambda u_1)-s_0\\
\end{array}
\]
then, one can check that
\[
(\lambda +1)^2 (s_1-s_0)(s_2-s_0)(s_3-s_0)= -2\lambda s_0(s_3+s_2+s_1-s_0)^2
\]
and we easily conclude that the tertiary Burniat surface under consideration is isomorphic to $S=T/G$ with $T\in\mathcal{N}$ given by
$-\nu_0=\nu_1=\nu_2=\nu_3=\sqrt{-\lambda}$ and $\nu_4=4(\lambda+1)$.
\end{proof}

\end{document}